\newtheorem{thm}{Theorem}[section]
\newtheorem{lem}{Lemma}[section]
\newtheorem{cor}{Corollary}[section]
\newtheorem{remark}{Remark}[section]
\newtheorem{example}{Example}[section]
\newtheorem{defn}{Definition}[section]
\newtheorem{asA}{A}
\newtheorem{as}{B}
\numberwithin{equation}{section}
\def\dis{\displaystyle}
\def\R{\mathbb{R}}
\def\N{\mathbb{N}}
\def\Y{\mathbb{Y}}
\def\ve{{\varepsilon}}
\def\e{{\epsilon}}
\def\D{\Delta}
\def\d{\delta}
\def\l{\left}
\def\r{\right}
\def\a{\alpha}
\def\b{\beta}
\def\G{\Gamma}
\def\g{\gamma}
\def\th{\theta}
\def\s{\sigma}
\def\k{\kappa}
\def\vp{\varphi}
\def\n{\nabla}
\def\p{\partial}
\def\F{\mathcal{F}}
\def\P{{\mathbb{P}}}
\def\toP{\stackrel{p}{\longrightarrow}}
\def\toD{\stackrel{{\cal L}}{\longrightarrow}}
\def\E{\mathbb{E}}
\def\mb#1{\mbox{\boldmath $#1$}}
\def\wh#1{\widehat{#1}}
\def\ol#1{\overline{#1}}
\def\wt#1{\widetilde{#1}}
\def\Var{\mathrm{Var}}
\def\Cov{\mathrm{Cov}}
\def\df{\mathrm{d}}
\newcommand{\bi}{\begin{itemize}}
\newcommand{\ei}{\end{itemize}}
\title{M-estimation for Gaussian processes with time-inhomogeneous drifts from high-frequency data}
\author{Yasutaka SHIMIZU\\ {\it Department of Applied Mathematics, Waseda University}}
\date{\today}
\begin{document}
\maketitle

\begin{abstract} 
We propose a contrast-based estimation method for Gaussian processes with time-inhomogeneous drifts under high-frequency sampling.
The method constructs a local-Gauss contrast from first-order increments, avoiding inversion of large covariance matrices and allowing computationally tractable inference.
We establish consistency and asymptotic normality under general ergodicity conditions for the driving Gaussian processes.
A key feature of the framework is that the local contrast asymptotically depends only on the local curvature of the covariance kernel around the origin.
Consequently, first-order local increments generally provide only partial information on the covariance structure, and additional kernel parameters are recovered through auxiliary moment-based procedures.
The drift estimator exhibits a nonstandard convergence rate under directly Riemann integrable (DRI) drift structures, reflecting weak information accumulation even when the observation horizon diverges.
The framework is flexible and applicable to a broad class of Gaussian process models, including nonsmooth kernels through mollified approximations.
Several examples and numerical experiments illustrate the theoretical and finite-sample properties of the proposed estimators.

\begin{flushleft}
{ \it Keywords:} Gaussian processes, high-frequency data, time-inhomogeneous drift, contrast-based estimation, method of moments
\vspace{1mm}\\
{\it MSC2010:} {\bf 62M10}; 62F12,  60G15.
\end{flushleft}
\end{abstract}

\section{Introduction}

Gaussian processes (GPs) provide a flexible probabilistic framework for modeling time-dependent phenomena with uncertainty quantification.
Because of their analytical tractability and nonparametric structure, they are widely used in statistics, machine learning, spatial statistics, and time series analysis. 
In many practical situations, observed data exhibit both deterministic long-term trends and stochastic short-term fluctuations.
This naturally leads to models consisting of a time-inhomogeneous drift combined with a stationary Gaussian component.

In this paper, we consider a stochastic process of the form
\[
X_t = Z_t + \int_0^t \mu_\xi(s)\mathrm{d}s,
\qquad t \ge 0,
\]
where $\mu_\xi$ is a deterministic drift density, $Z=(Z_t)_{t\ge0}$ is a centered stationary Gaussian process
with covariance kernel $K_\sigma$, defined as follows: 

\begin{defn}
A stochastic process $Z = (Z_t)_{t \ge 0}$ is called a \textit{Gaussian process} if and only if any finite-dimensional distribution is multivariate Gaussian: for any $t_1,\dots,t_d > 0$, there exist a mean vector $m \in \mathbb{R}^d$ and a positive definite $d \times d$ matrix $\Sigma$ such that
\[
(Z_{t_1}, Z_{t_2}, \dots, Z_{t_d}) \sim N_d(m, \Sigma),
\]
where $N_d(m, \Sigma)$ denotes the $d$-dimensional normal distribution with mean $m$ and covariance matrix $\Sigma$.
\end{defn}

\begin{defn}
A Gaussian process $Z$ is said to be \textit{stationary} if there exists a function $K:[0,\infty) \to \mathbb{R}$ such that
\[
\E[Z_t Z_s] = K(|t - s|), \quad t, s \ge 0.
\]
That is, the covariance between $Z_t$ and $Z_s$ depends only on their time lag. The function $K$ is called the \textit{kernel function}. Moreover, $Z$ is said to be \textit{centered} if the mean function $m(t) := \E[Z_t]$ is identically zero.
\end{defn}

The process is observed under high-frequency sampling:
\[
t_i = ih_n,
\qquad
h_n \to 0,
\qquad
nh_n \to \infty.
\]
Our objective is to estimate both the drift parameter $\xi$ and the covariance parameter $\sigma$ from discrete observations of $X$.

Parameter estimation for Gaussian processes has been extensively studied.
The most classical approach is maximum likelihood estimation (MLE), which relies on the multivariate Gaussian likelihood; see, e.g., Rasmussen and Williams~\cite{rw06}.
However, naive evaluation of the Gaussian likelihood requires inversion of large covariance matrices, whose computational complexity is typically of order $O(n^3)$.
This becomes problematic for high-frequency or large-scale data.

To overcome this difficulty, various approximate likelihood methods have been proposed.
Composite likelihood methods, developed by Cox and Reid~\cite{cr04}, Davis and Yau~\cite{dy11}, Varin et al.~\cite{vetal11}, and Bachoc and Lagnoux~\cite{bl20}, construct tractable criteria from low-dimensional Gaussian marginals.
In the context of high-frequency Gaussian processes, Bennedsen et al.~\cite{betal24} proposed composite likelihood estimators based on local Gaussian blocks.
Frequency-domain approaches, originating from Whittle~\cite{w51}, have also been extensively developed; see, for example, Takabatake~\cite{t22} and Fukasawa and Takabatake~\cite{ft16} for high-frequency inference for Gaussian models.
Continuous-time likelihood approaches for Gaussian process models have also recently been investigated by Kobayashi et al.~\cite{ketal25}.

The present paper is also based on local increments.
More precisely, we construct a contrast function from first-order increments $\Delta_i^nX=X_{t_i}-X_{t_{i-1}}$ using only their local Gaussian approximation.
The resulting criterion depends only on scalar local variances, and does not require covariance matrix inversion. 

The primary purpose of the proposed local-Gauss contrast is inference for the drift component from local increment information.
The covariance kernel enters mainly through its local behavior around the origin, which determines the asymptotic structure of first-order increments.
This naturally raises the question of how much covariance information can be recovered from local increments alone.

A key observation is that first-order local increments contain only localized information about the covariance structure.
More precisely, the local contrast asymptotically depends only on the local curvature $\p_t^2K_\sigma(0)$ through the expansion
\[
K_\sigma(h_n)-K_\sigma(0)
=
\frac12\p_t^2K_\sigma(0)h_n^2 + o(h_n^2).
\]
Consequently, first-order local increments generally provide only partial information on the covariance structure.
For instance, in Gaussian kernel models, only certain combinations of covariance parameters appear asymptotically in the local contrast.
Thus, the proposed local-Gauss contrast should not be interpreted as a full covariance estimation procedure.
Instead, additional covariance parameters are recovered through auxiliary moment-type estimators designed to capture global covariance information invisible to local increments.

Another distinctive feature of the present framework is the appearance of a nonstandard asymptotic rate.
Under directly Riemann integrable (DRI) drift structures, the estimator of the drift parameter satisfies
\[
h_n^{-1/2}(\widehat{\xi}_n-\xi_0)=O_p(1),
\]
instead of the standard $\sqrt{nh_n}$-type rate commonly encountered in ergodic diffusion inference.
This phenomenon originates from the fact that the proposed local-Gauss contrast is driven primarily by local increment information.
Under DRI-type assumptions, the effective information accumulation for the drift component remains relatively weak even when the observation horizon
\[
T_n := nh_n
\]
diverges.
Consequently, although consistency and asymptotic normality hold theoretically, finite-sample concentration may remain weak, making the convergence behavior difficult to observe numerically for moderately large sample sizes.

Thus, the main focus of this paper is not merely computational simplification, but rather the asymptotic structure generated by local increment inference.
In particular, the paper reveals:
\begin{itemize}
\item
a partial identifiability structure induced by first-order local increments;

\item
a nonstandard convergence rate caused by DRI-type information accumulation;

\item
an asymptotic separation between local covariance curvature and global covariance information.
\end{itemize}

From a technical viewpoint, the paper also develops several asymptotic tools for ergodic Gaussian processes under high-frequency sampling.
In particular, the appendix establishes weak-dependence limit theorems and moment estimates for local increment functionals under general ergodicity conditions.
These results are not restricted to the present estimation problem, and may provide useful probabilistic tools for future studies of high-frequency inference for non-Markovian Gaussian models.

The asymptotic theory developed in this paper also provides a theoretical basis for future hybrid procedures combining local increment inference with richer covariance recovery methods, including composite likelihood and frequency-domain approaches.

The remainder of the paper is organized as follows. 
Section~\ref{sec:models} introduces the Gaussian process model and assumptions.
Section~\ref{sec:local-Gauss} develops the local-Gauss contrast estimator and establishes consistency and asymptotic normality.
Section~\ref{sec:moment} studies moment-type estimators for recovering additional covariance information.
Section~\ref{sec:example} provides examples and numerical experiments.
Proofs are collected in later sections and appendices.

\subsection*{Notation}

\begin{itemize}
\item The random vector $Z$ follows the normal distribution with mean vector $m$ and covariance matrix $\Sigma$, we write $Z\sim {\cal N}(m,\Sigma)$. 
\item The probability density function of ${\cal N}(0,\Sigma)$ is given by $\phi_{\Sigma}$. 
\item For a centered stationary Gaussian process $Z=(Z_t)_{t\ge 0}$ with the kernel function $K$, we write $Z\sim GP(0,K)$.  
\item For a subet $S\subset \R^p$, $\ol{S}$ is the closure of $S$ w.r.t. the Euclidian norm. 
\item For a function $f(x,y): \R^{d}\times \R^{d'}\to \R$ and $x=(x_1,\dots,x_d)^\top$, 
\[
\p_x f := \frac{\p f}{\p x} = \l(\frac{\p f}{\p x_1},\dots,\frac{\p f}{\p x_d}\r)^\top \in \R^{d},\quad 
\p_x^2 f := \p_x \p_x^\top f = 
\begin{pmatrix}
\frac{\p^2 f}{\p x_1\p x_1} &\dots& \frac{\p^2 f}{\p x_1 \p x_d} \\
\vdots & \ddots & \vdots \\
\frac{\p^2 f}{\p x_d\p x_1} &\dots& \frac{\p^2 f}{\p x_d \p x_d}
\end{pmatrix} \in \R^{d\times d}. 
\]
if the partial derivatives exist, where $\top$ stands for the transpose. 

\end{itemize}

\section{Models and Assumptions}\label{sec:models}

Consider a stochastic process driven by a Gaussian process $Z$:
\begin{align}
X_t = Z_t + \int_0^t \mu(s)\,\df s,\quad  X_0 = Z_0\label{model}
\end{align}
where $\mu: [0,\infty) \to \R$ and $Z=(Z_t)_{t\ge 0}\sim GP(0,K)$, a centered stationary Gaussian process with the kernel function $K$. 
The goal of the paper is to estimate the mean density $\mu$ and kernel functions $K$  from discrete samples of $X$ as follows:  
\[
X_{t_0},\ X_{t_1},\  \dots,\  X_{t_{n-1}},\  X_{t_n}, 
\]
where $t_i = i h_n$ with $h_n > 0$ for $i=1,2,\dots,n$. In asymptotic theory, we assume high-frequency sampling over a long time horizon, which is standard in modern applications where sufficiently dense observations are available: 
\begin{align}
h_n\to 0,\quad nh_n\to \infty,\quad \mbox{as $n\to \infty$. } \label{sampling}
\end{align}

For that purpose, we consider parametric families for $\mu$ and $K$ as follows: 
\[
\l\{\mu_\xi:  [0,\infty) \to \R \,|\, \xi \in \ol{\Xi}\r\};\quad \l\{K_\s:[0,\infty) \to (0,\infty)\,|\, \s\in \ol{\Pi}\r\}, 
\]
where  $\Xi\subset \R^p$  and $\Pi\subset \R^q$ are open and convex bounded subset, and set $\Theta := \Xi \times \Pi$. 
We suppose that there is the true values of parameters: 
\[
\th_0=(\xi_0,\s_0) \in \Theta;\quad \mu_{\xi_0} \equiv \mu;\quad K \equiv K_{\s_0}. 
\]

\begin{asA}\label{as:2}
$K(t) \to 0$ as $t \to \infty$.
\end{asA}

\begin{asA}\label{as:3}
 $K \in C^2([0,\infty))$ with $\p_t K(0) = 0$ and $|\p_t^2 K(0)| > 0$, that is,
\[
K(t) = K(0) + \frac{1}{2} \p_t^2 K(0) t^2 + o(t^3), \quad t \to 0.
\]
\end{asA}

\begin{asA}\label{as:4}
$K \in C^2([0,\infty))$ and 
\[
\p_tK(t) \to 0,\quad t\to \infty. 
\]
\end{asA}
Condition A\ref{as:2} is a mixing-type condition. In fact, the process $Z$ is \textit{weakly mixing} under A\ref{as:2}; see Maruyama~\cite{m49}. This ensures the ergodicity of $Z$ in the sense of Corollary~\ref{cor:ergodic}.

This smoothness condition relates to the regularity of sample paths of the underlying Gaussian process. The assumption A\ref{as:3} is just the Taylor expansion and the remainder will be $o(t^3)$ since $K$ is symmetric. However, such a smoothness is not always standard; in fact, certain important examples such as the Ornstein-Uhlenbeck (O-U) process (see Example~\ref{ex:ou-kernel}) do not satisfy it. Nonetheless, consistent estimators of $\theta$ can be constructed by approximating the non-smooth kernel with a smooth mollified version $K_\varepsilon$ such that $K_\varepsilon \to K$ as $\varepsilon \to 0$; see Section~\ref{ex:O-U}.

We shall give some examples on stationary kernels. 
\begin{example} \label{ex:kernels}
Let us give some examples for kernel functions satisfying A\ref{as:2} and A\ref{as:3}. 
\bi
\item {\it Gaussian kernel (Radial Basis Function)}: 
\[
K_\s(t) = \a \exp\l(-\frac{\b}{2} t^2\r),\quad \s=(\a,\b) \in \R^2,
\]
with 
\[
K_\s(t) = \a - \a\b\,t^2 + o(t^3),\quad t\to 0.
\]

\item {\it Mat\'ern kernel}: (this is of $C^2$ as $\nu > 2$) 
\[
K_\s(t) = \a \frac{2^{1-\nu}}{\G(\nu)} \l( \sqrt{2\nu} \b |t| \r)^\nu B_\nu\l( \sqrt{2\nu} \b |t| \r),\quad \s=(\a,\b,\nu)\in \R^3
\]
where $B_\nu$ is the modified Bessel function of 2nd kind. It is known that 
\[
K_\s(t) =\a - \a \b^2\frac{\nu }{2\nu - 1} t^2 + o(t^3),\quad t\to 0. 
\]
\item {\it Rational Quadratic kernel}
\[
K_\s(t) = \a \l(1 + \frac{\b^2 t^2}{2\g} \r)^{-\g},\quad \s=(\a,\b,\g)\in \R_+^3, 
\]
with 
\[
K_\s(t) = \a - \frac{\a \b^2}{2} t^2 + o(t^3),\quad t\to 0, 
\]
In particular, it is called `Cauchy kernel' as $\a=1$. 
\ei
\end{example}

\begin{example} \label{ex:ou-kernel}{Exponential (O-U) kernel} 
The following kernel is also important in applications. 
It is called the {\it exponentail kernel} or the {\it Ornstein-Uhlenbeck kernel}: 
\[
K_\s(t) = \a  \exp\l(-\b|t|\r),\quad \s=(\a,\b) \in \R_+^2,
\]
which is not smooth at $t=0$ since it includes $|t|$ in the exponent. However, we can approximate this kernel with a {\it `mollifier'}  such as
\[
\int_\R \vp(s)\,\df s = 1; \quad \vp_\e(t):=\e^{-1} \vp\l(\e^{-1} t\r) \to \d_0(t)\quad \e\to 0, 
\]
where $\d_0$ is Dirac's delta function. Consider a {\it smoothed kernel}
\[
K_\s^{(\e)}(t) = \int_\R K_\s(t-s)\vp_\e(s)\,\df s \to K_\s(t) \quad \e\to 0. 
\]
Then, it follows for $m\in \N$ that 
\[
\p_t^m K_\s^{(\e)}(t) := \int_\R K_\s(t-s)\p^m \vp_\e(s)\,\df s \to \p_t^mK_\s(0),\quad \e\to 0, 
\]
where the last $ \p_t^mK_\s(0)$ is a `generalized' derivative. 
Therefore, a smooth $K_\s^{(\e)}$ is available as an approximation of non-smooth $K(t)$ in practice. 
For example, using the Laplace mollifier 
\[
\varphi_\e(s) = \frac{1}{2\e} e^{-\frac{|s|}{\e}}, 
\]
we have for each $t > 0$:
\begin{align*}
K_\s^{(\e)}(t) &= \a e^{-\b t} \int_{\R} e^{\b s} \vp_\e(s) \mathbf{1}_{\{s \le t\}}\,\df s + \a e^{\b t} \int_{\R} e^{-\b s} \vp_\e(s) \mathbf{1}_{\{s > t\}}\,\df s\to \frac{\a}{1 + \b\e}  \quad (t\to 0) \\
\p_t K_\s^{(\e)}(t) &= -\a\b e^{-\b t} \int_{\R} e^{\b s} \vp_\e(s) \mathbf{1}_{\{s \le t\}}\,\df s + \a\b e^{\b t} \int_{\R} e^{-\b s} \vp_\e(s) \mathbf{1}_{\{s > t\}}\,\df s, \\
&\to  -\a\b \int_{-\infty}^0 e^{\b s}\vp_\e(s)\,\df s + \a\b \int_0^\infty e^{-\b s}\vp_\e(s)\,\df s =0,  \quad (t\to 0) \\
\p_t^2 K_\s^{(\e)}(t) &= \a\b^2 \l[ e^{-\b t} \int_{\R} e^{\b s} \vp_\e(s) \mathbf{1}_{\{s \le t\}}\,\df s + e^{\b t} \int_{\R} e^{-\b s} \vp_\e(s) \mathbf{1}_{\{s > t\}}\,\df s \r] - 2\a\b \vp_\e(t) \\
&\to \frac{\a\b^2}{1+\b\e} - \frac{\a\b}{\e}. 
\end{align*}

Moreover, as $t \downarrow 0$, using this explicit form we obtain the following expansion:
\begin{align*}
K_\s^{(\e)}(t) &=\frac{\a}{1 + \b\e} -  \l(\frac{\a\b}{\e} - \frac{\a\b^2}{1+\b\e}\r) t^2 + o(t^3), \quad (t\to 0). 
\end{align*}
Hence, the mollified kernel $K_\s^{(\e)}$ is smooth and satisfies the condition A\ref{as:3} for any fixed $\e > 0$. 

That is, instead of modeling the data by an exact Ornstein-Uhlenbeck process, one can model it by a Gaussian process with kernel $K_\sigma^{(\varepsilon)}$ using a small $\varepsilon > 0$. See the example in Subsection~\ref{ex:O-U} for an illustration of this approach.

\end{example}

\begin{remark}
As for the assumption A\ref{as:3}, there exists a Gaussian process with $\p_t^2 K_\s(0) = 0$. For example, 
\[
K_\s(t) = 
\begin{cases}
\exp\l(-\frac{1}{1 - \l(t/\s\r)^2}\r) & \text{if } |t| < \s, \\
0 & \text{otherwise}, 
\end{cases}
\]
is of $C^\infty$-class, and all the derivatives at $t=0$ is identically zero. Since we can also confirm the semi-positive definiteness of this kernel, 
this is a Gaussian kernel.
However, such a Gaussian process with `flat' derivatives at zero is  impractical because the sample paths are too smooth due to the small quadratic variation. From an applied point of view, we are interested in processes with higher volatilities. Hence, A\ref{as:3} is not so strong assumption. 
\end{remark}

In what follows, we shall use the concept of \emph{directly Riemann integrable (DRI)}: A non-negative function $g : \mathbb{R} \to [0, \infty)$ is said to be DRI if its upper and lower Riemann sums \emph{over the whole real line} converge to the same finite limit, as the mesh of the partition vanishes: 
\[
\lim_{h \downarrow 0} \sum_{k \in \mathbb{Z}}  \underline{g}_{k,h} \cdot h=\lim_{h \downarrow 0} \sum_{k \in \mathbb{Z}}  \ol{g}_{k,h}\cdot h =:\int_\R g(s)\,\df s \in (-\infty,\infty), 
\]
where $\ol{g}_{k,h}:=\sup_{z \in [kh, (k+1)h)} g(z)$ and $\underline{g}_{k,h} := \inf_{z \in [kh, (k+1)h)} g(z)$. 
If the function $g$ may also take negative values, it is said to be DRI if both its positive and negative parts $g^+$ and $g^-$ are so. For more details, see, e.g., Asmussen~\cite{a03}, Section V.4; Feller~\cite{f71}, Section~XI.1; Rolski et al.~\cite{retal99}; Caravenna~\cite{c12}; and references therein. 

\begin{remark}\label{rem:dri-cont}
Note that if $g$ is DRI, then $g$ is bounded and continuous a.e. with respect to the Lebesgue measure; see Asmussen~\cite{a03}, Proposition V.4.1. 
\end{remark}

\begin{remark}\label{rem:DRI}
In our sampling scheme \eqref{sampling}, it follows for a DRI function $g:[0,\infty)\to \R$ that 
\begin{align}
\lim_{n\to \infty}  \sum_{i=1}^n g(t_{i-1})h_n = \int_0^\infty g(s)\,\df s, \label{DRI}
\end{align}
since, under $nh_n\to \infty$, 
\[
\lim_{n\to \infty} \l[\sum_{i=1}^\infty g(t_{i-1})h_n  - \sum_{i=1}^n g(t_{i-1})h_n\r] = \lim_{n\to \infty} \int_{nh_n}^\infty g(s)\,\df s = 0. 
\]
Therefore, when we use the convergence \eqref{DRI}, the condition $nh_n\to \infty$ is always required. 

\end{remark}

We further impose the following assumptions on the parametric model, which will be introduced as needed in the discussion below.

\begin{as}\label{as:K2}
For any $\s\in \ol{\Pi}$,  $K_\s \in C^2([0,\infty))$ with $\p_tK_\s(0) = 0$ and $|\p_t^2 K_\s(0)| \in (0,\infty)$, that is, 
\[
\sup_{\s\in \ol{\Pi}} \l| \frac{K_\s(t) -  K_\s(0) - \frac{1}{2}\p_t^2 K_\s(0) t^2}{t^2}\r| \to 0,\quad t\downarrow 0. 
\]
\end{as}

\begin{as}\label{as:mu}
For any $\xi \in \Xi$, $\mu_\xi \in C^1(\R)$ and $\sup_{t>0,\xi\in \ol{\Xi}}|\p_t \mu_\xi(t)| < \infty$. 
\end{as}

\begin{as}\label{as:DRI-mu}
For any $\xi \in \Xi$, $\mu_\xi$ is DRI on $\R$.
\end{as}

\begin{as}\label{as:id2}
$\mu_\xi = \mu_{\xi_0}\ a.e.$ for any $t\in [0,\infty)$ implies that $\xi = \xi_0$. 
\end{as}

\begin{as}\label{as:id}
$\p^2_t K_\s(0) = \p^2_t K_{\s_0}(0)$ implies that $\s = \s_0$. 
\end{as}

\begin{as}\label{as:DRI-mu2}
For any $\xi \in \Xi$, the function $t \mapsto \partial_\xi \mu_\xi(t)$ is DRI on $[0, \infty)$.
\end{as}

\begin{as}\label{as:DRI-mu4}
For any $\xi \in \Xi$, the function $t \mapsto \partial_\xi^2 \mu_\xi(t)$ is bounded.
\end{as}

\begin{as}\label{as:DRI-mu3}
There exists a bounded function $\g \in L^1([0,\infty))$ such that $\sup_{\xi \in \Xi} |\partial_\xi \mu_\xi(t)|\le \g(t)$ for each $t\ge 0$.
\end{as}

\begin{remark}
Assumption B\ref{as:id} is related to the identifiability properties of the local contrast based on first-order increments.
Since the local contrast depends asymptotically only on the local quadratic behavior of the covariance kernel around the origin, different parameter values may yield the same local curvature structure.

For instance, in the Gaussian kernel model
\[
K_\s(t)=\a\exp\l(-\frac{\b}{2}t^2\r),
\]
the quantity $\p_t^2K_\s(0)=-2\a\b$ depends only on the product parameter $\a\b$.
Hence, the local contrast alone cannot asymptotically distinguish $\a$ and $\b$ separately.

Nevertheless, additional global information, such as moment-type statistics, may still allow separate estimation of covariance parameters.
This motivates the auxiliary estimation procedure introduced later in the paper.\end{remark}

\section{M-estimation for sampled Gaussian processes}\label{sec:M-est}

\subsection{Local-Gauss contrast}\label{sec:local-Gauss}

The local-Gauss contrast utilizes only the local increment structure of the Gaussian process.
More precisely, the resulting contrast depends asymptotically only on the local quadratic behavior of the covariance kernel around the origin.
Therefore, the corresponding $M$-estimator mainly captures local curvature information of the covariance structure,
rather than the full global covariance behavior.  
This local nature is computationally advantageous, since it avoids inversion of large covariance matrices,
but it may lead to partial identifiability of covariance parameters.
To recover additional global covariance information, we later introduce auxiliary moment estimators.

We use the notation that, for a process $X = (X_t)_{t\ge 0}$, 
\[
\D_i^n X := X_{t_i} - X_{t_{i-1}}, 
\]
the increment of $X$ on $(t_{i-1},t_i]\ (i=1,2,\dots,n)$. Note that
\[
\D_i^nX - \mu(t_{i-1}^n) h_n \approx  \D_i^n Z = (1 , -1)\begin{pmatrix}Z_{t_i^n} \\ Z_{t_{i-1}^n} \end{pmatrix} \sim {\cal N}(0, 2[K(0) - K(h_n)]). 
\]

Hence, we will use the following {\it local-Gauss} contrast function:
\begin{align}
\ell_n(\xi,\s)  = \frac{1}{n}\sum_{i=1}^n \frac{\l(\D_i^n X - \mu_\xi(t_{i-1}) h_n\r)^2}{2[K_{\s}(0) - K_{\s}(h_n)]} +  \log \l(2h_n^{-2}[K_{\s}(0) - K_{\s}(h_n)]\r). \label{lg-contrast}
\end{align}
Note that $2h_n^{-2}$ in the logarithm of the second term is for an appropriate scaling to obtain a proper limt of $\ell_n(\th)$. 
 
We consider a {\it minimum contrast estimator} ($M$-estimator) defined as follows:
\begin{align*}
\wh{\th}_n := (\wh{\xi}_n,\wh{\s}_n)= \arg\min_{\th \in \ol{\Theta}} \ell_n(\th). 
\end{align*}

\begin{remark}\label{rem:xi}
To estimate $\xi_0$, we can optimize the following simplified estimating function:
\[
\wh{\xi}_n=\arg\min_{\xi\in \Xi} \sum_{i=1}^n \l( \Delta_i^n X - h_n \mu_\xi(t_{i-1}) \r)^2,
\]
which corresponds to the least squares estimation, and it does not require an estimator for $\sigma_0$. 
This often yields an explicit form for $\widehat{\xi}_n$; see, e.g., Example~\ref{sec:drift+gauss}.
\end{remark}

\begin{remark}[Relation to composite likelihood methods]
Our contrast function is constructed from the sequence of increments $\D_i^n X := X_{t_i} - X_{t_{i-1}}$, based on the assumption that these are approximately centered Gaussian with variance $K_\sigma(0) - K_\sigma(h_n)$. This yields a pseudo-likelihood that is computationally efficient, as it avoids inversion of covariance matrices.
This approach shares a structural similarity with the composite likelihood estimation (CLE) method proposed by Bennedsen et al.~\cite{betal24}, which constructs a contrast by aggregating marginal Gaussian likelihoods over $q$-dimensional vectors. In particular, when $q = 2$, CLE uses the full bivariate Gaussian likelihood of $(X_{t_{i-1}}, X_{t_i})^\top$, incorporating the full $2 \times 2$ covariance matrix
\[
\Gamma_\sigma^{(2)} =
\begin{bmatrix}
K_\sigma(0) & K_\sigma(h_n) \\
K_\sigma(h_n) & K_\sigma(0)
\end{bmatrix}.
\]
Thus, while both methods are based on local Gaussian structures, our method relies only on scalar increments and is not a strict special case of CLE.
\end{remark}

\begin{thm}\label{thm:consist}
Suppose the assumptions A\ref{as:2}--A\ref{as:4} and B\ref{as:K2}--B\ref{as:id}. 
Then the $M$-estimator $\wh{\th}_n$ 
is consistent to $\th_0$: 
\[
\wh{\th}_n \toP \th_0,\quad n\to \infty.  
\]
under the sampling \eqref{sampling}. 
\end{thm}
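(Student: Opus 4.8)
The plan is to follow the classical recipe for consistency of $M$-estimators: show that the contrast $\ell_n(\th)$ converges in probability, uniformly over the compact parameter space $\ol{\Theta}$, to a nonrandom limit function $\ell(\th)$ which is uniquely minimized at $\th_0$, and then invoke the standard argument-max convergence theorem. The first step is to derive the limit function. Writing $\D_i^n X = \D_i^n Z + \int_{t_{i-1}}^{t_i}\mu_{\xi_0}(s)\,\df s$, I would expand the numerator $(\D_i^n X - \mu_\xi(t_{i-1})h_n)^2$ and separate it into the stochastic term $(\D_i^n Z)^2$, the cross term $2\D_i^n Z\cdot(\int_{t_{i-1}}^{t_i}\mu_{\xi_0}-\mu_\xi(t_{i-1})h_n)$, and the deterministic term $(\int_{t_{i-1}}^{t_i}\mu_{\xi_0}(s)\,\df s - \mu_\xi(t_{i-1})h_n)^2$. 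Using A\ref{as:3}/B\ref{as:K2}, $2[K_\s(0)-K_\s(h_n)] \sim -\p_t^2 K_\s(0)\,h_n^2$, so the logarithmic term converges to $\log(-\p_t^2 K_\s(0))$ uniformly in $\s$. For the stochastic part, $\E[(\D_i^n Z)^2] = 2[K_{\s_0}(0)-K_{\s_0}(h_n)]$, giving a leading contribution $\frac{-\p_t^2 K_{\s_0}(0)}{-\p_t^2 K_\s(0)}$ after division; the fluctuation of $\frac1n\sum_i (\D_i^n Z)^2$ around its mean is controlled by the ergodic/LLN-type limit theorems for Gaussian processes in Appendix~\ref{app:A} (this is where A\ref{as:2}, A\ref{as:4} enter). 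For the deterministic part, since $\mu_{\xi_0}-\mu_\xi$ is $C^1$ with bounded derivative (B\ref{as:mu}) and DRI (B\ref{as:DRI-mu}), the Riemann-sum approximation $\int_{t_{i-1}}^{t_i}\mu_{\xi_0}(s)\,\df s \approx \mu_{\xi_0}(t_{i-1})h_n$ together with Remark~\ref{rem:DRI} yields $\frac1n\sum_i(\mu_{\xi_0}(t_{i-1})-\mu_\xi(t_{i-1}))^2 h_n^2 / (2[K_\s(0)-K_\s(h_n)]) \to 0$ because the extra factor $h_n\to 0$ kills it. So the drift contributes nothing to the leading-order limit, and one obtains
\[
\ell(\th) = \frac{-\p_t^2 K_{\s_0}(0)}{-\p_t^2 K_\s(0)} + \log\l(-\p_t^2 K_\s(0)\r),
\]
which by elementary calculus (minimize $a/x + \log x$ in $x>0$, minimum at $x=a$) is uniquely minimized over $\s$ at any $\s$ with $\p_t^2 K_\s(0) = \p_t^2 K_{\s_0}(0)$, hence at $\s = \s_0$ by the identifiability assumption B\ref{as:id}.

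The subtlety is that $\ell(\th)$ as written does not depend on $\xi$ at leading order, so $\xi_0$ is \emph{not} identified by the leading term alone — the identification of $\xi$ must come from a lower-order term in the expansion. To handle this I would rescale: consider $n h_n^{-1}(\ell_n(\xi,\wh\s_n) - \ell_n(\xi_0,\wh\s_n))$ or, more cleanly, work with the profiled/simplified drift criterion of Remark~\ref{rem:xi}, namely $\Psi_n(\xi) := \frac{1}{nh_n}\sum_{i=1}^n(\D_i^n X - h_n\mu_\xi(t_{i-1}))^2$. Expanding, the dominant random term $\frac{1}{nh_n}\sum_i (\D_i^n Z)^2 \to \frac{-\p_t^2 K_{\s_0}(0)}{1}\cdot$(something independent of $\xi$) — actually this term diverges or is $\xi$-free, so subtract it off: $\Psi_n(\xi) - \Psi_n(\xi_0)$ has leading term $\frac{1}{nh_n}\sum_i\big[(\mu_\xi(t_{i-1}))^2 - (\mu_{\xi_0}(t_{i-1}))^2\big]h_n^2 - 2\sum_i \D_i^n Z(\mu_{\xi_0}(t_{i-1})-\mu_\xi(t_{i-1}))h_n / (nh_n)$. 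The first sum is $\frac{h_n}{n}\sum_i(\cdots) \to 0$ by DRI — this is too small. The right scaling is instead to note that the $\xi$-dependent deterministic part of $\Psi_n$ is $\frac1{nh_n}\sum_i(\mu_{\xi_0}(t_{i-1})-\mu_\xi(t_{i-1}))^2 h_n^2 = \frac{h_n}{n}\sum_i(\mu_{\xi_0}-\mu_\xi)^2(t_{i-1})$; multiplying the whole criterion by $1/h_n$ gives $\frac{1}{n}\sum_i(\mu_{\xi_0}-\mu_\xi)^2(t_{i-1}) \to 0$ still. The correct normalization, forced by DRI (Remark~\ref{rem:DRI}: $\sum_i g(t_{i-1})h_n \to \int g$), is to divide by $h_n$ once more or to sum rather than average: define $U_n(\xi) := \frac{1}{h_n}\big(\Psi_n(\xi)-\Psi_n(\xi_0)\big)$-type quantity so that the deterministic part becomes $\sum_i(\mu_{\xi_0}-\mu_\xi)^2(t_{i-1})h_n \to \int_0^\infty(\mu_{\xi_0}(s)-\mu_\xi(s))^2\,\df s =: D(\xi)$, which by B\ref{as:DRI-mu} is DRI, nonnegative, and by the identifiability assumption B\ref{as:id2} vanishes iff $\xi=\xi_0$ a.e. Meanwhile the cross term $\frac{2}{h_n}\cdot\frac1{nh_n}\sum_i \D_i^n Z\,(\mu_{\xi_0}(t_{i-1})-\mu_\xi(t_{i-1}))h_n$ has mean zero and variance of order $\frac{1}{h_n^2}\cdot\frac{1}{n^2 h_n^2}\cdot n\cdot h_n^2\cdot h_n^2\cdot(K(0)-K(h_n)) \sim \frac{1}{n}\to 0$ (using $\E[(\D_i^n Z)^2]\sim h_n^2$ and near-independence of adjacent $Z$-increments from A\ref{as:2}), so it is negligible. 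Thus $U_n(\xi)\toP D(\xi)$, uniformly in $\xi$ by B\ref{as:DRI-mu3} (dominated DRI envelope) and B\ref{as:mu}, and $D$ is uniquely minimized at $\xi_0$.

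With both pieces in hand, the conclusion follows from the standard theorem (e.g.\ van der Vaart, or the argmax continuous mapping): uniform convergence of the (appropriately rescaled) criterion to a limit with well-separated unique minimum at $\th_0$, over a compact set $\ol{\Theta}$, forces $\wh\th_n\toP\th_0$. More carefully, one first gets $\wh\s_n\toP\s_0$ from the leading-order analysis of $\ell_n$, then plugs this in and uses continuity of the maps $\s\mapsto K_\s(0)-K_\s(h_n)$ to reduce the $\xi$-analysis to the fixed-$\s_0$ case, and finally applies the rescaled argument for $\wh\xi_n$. The main obstacle — and the conceptual heart of the proof — is precisely this two-scale structure: the covariance parameter $\s$ lives at the $O(1)$ scale of the contrast while the drift parameter $\xi$ only surfaces at the $O(h_n)$ scale, and establishing the \emph{uniform} convergence of the rescaled drift criterion to the DRI integral $D(\xi)$, with all the $h_n$-powers bookkept correctly and the cross term shown negligible, requires the ergodic limit theorems of Appendix~\ref{app:A} adapted to drifted data (Appendix~\ref{app:B}) together with careful use of the DRI hypotheses B\ref{as:DRI-mu}, B\ref{as:DRI-mu3}. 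The rest (Taylor expansions of $K_\s$, $\mu_\xi$; Cesàro/DRI limits; the trivial one-variable minimization) is routine.
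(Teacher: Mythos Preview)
Your strategy is essentially the paper's own: first show $\ell_n(\th)\toP \ell(\s)=\frac{\p_t^2 K_{\s_0}(0)}{\p_t^2 K_\s(0)}+\log(-\p_t^2 K_\s(0))$ uniformly, conclude $\wh\s_n\toP\s_0$ via van der Vaart's Theorem~5.7, and then rescale the contrast to expose the $\xi$-dependent DRI limit $\int_0^\infty(\mu_\xi-\mu_{\xi_0})^2\,\df s$. Two small points where the paper is cleaner: (i) the correct rescaling is $L_n(\xi):=nh_n\{\ell_n(\xi,\wh\s_n)-\ell_n(\xi_0,\wh\s_n)\}$ (equivalently $n(\Psi_n(\xi)-\Psi_n(\xi_0))$ in your notation), not $\frac{1}{h_n}(\Psi_n(\xi)-\Psi_n(\xi_0))$---your ``sum rather than average'' remark is the right instinct but the stated factor is off; and (ii) for the cross term $\sum_i(\mu_\xi-\mu_{\xi_0})(t_{i-1})\D_i^n Z\toP 0$ the paper invokes Lemma~\ref{lem:m-DZ}, a short martingale-array argument using only that $(\mu_\xi-\mu_{\xi_0})^2$ is DRI, rather than your direct variance bound which would need control of the off-diagonal covariances. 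Finally, note that B\ref{as:DRI-mu3} is \emph{not} among the hypotheses of this theorem, so you should not invoke it for uniformity; the paper handles the $\xi$-uniformity implicitly through compactness of $\ol\Xi$ and the structure of the DRI sums.
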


\begin{remark}[Local identifiability and auxiliary estimation]
The local-Gauss contrast exploits only the second-order local behavior of the covariance kernel around the origin.
Indeed, under Assumption~B\ref{as:K2},
\[
2[K_\sigma(0)-K_\sigma(h_n)]
=
-\partial_t^2 K_\sigma(0) h_n^2 + o(h_n^2),
\]
and therefore the asymptotic structure of the contrast depends only on the local curvature quantity $-\partial_t^2 K_\sigma(0)$.

Consequently, the estimator $\widehat{\sigma}_n$ generally identifies only combinations of parameters determining
the local curvature of the covariance kernel. 
For example, in Gaussian kernel models, different parameter values may produce the same value of
$\partial_t^2 K_\sigma(0)$.

To recover the remaining covariance parameters, additional global information is necessary.
This motivates the moment-based estimators introduced in the next section, which utilize global covariance structures beyond local increments.
\end{remark}

\begin{thm}\label{thm:asy-norm}
Suppose the same conditions as in Theorem \ref{thm:consist}. Suppose further B\ref{as:DRI-mu2} and B\ref{as:DRI-mu4}. 
Then, the estimator $\wh{\theta}_n$ is asymptotically normal under \eqref{sampling}:
\[
D_n(\wh{\theta}_n - \theta_0) \toD \mathcal{N}(0, \Sigma(\theta_0)),
\]
where $D_n := \mathrm{diag}\l(h_n^{-1/2} I_p, \sqrt{n} I_q\r)$ and the information-type matrix is
\[
\Sigma(\theta_0) := 
\begin{pmatrix}
\dis -\p_t^2 K_{\s_0}(0) \l[2 \int_0^\infty \{\p_\xi \mu_{\xi_0}(t)\}^{\otimes 2}\, \df t\r]^{-1} & 0 \\
0 & V_2^{-1}(\s_0) V_1(\s_0) V_2^{-1}(\s_0)
\end{pmatrix}.
\]
where 
\[
V_1(\s) = \l(\frac{1}{2}\p_\s \log \l(-\p_t^2 K_{\s}(0)\r)\r)^{\otimes 2},\quad V_2(\s) =  \p_\s^2 \log \l(-\p_t^2 K_{\s}(0)\r). 
\]
\end{thm}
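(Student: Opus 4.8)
The plan is to follow the classical $M$-estimator route: expand the score $\p_\th \ell_n$ around $\th_0$, invoke consistency from Theorem~\ref{thm:consist}, and combine a central limit theorem for $D_n\p_\th\ell_n(\th_0)$ with a uniform law of large numbers for the Hessian $\p_\th^2\ell_n$. First I would record the block structure of the problem: because $\ell_n$ depends on $\xi$ only through the quadratic term $\sum_i(\D_i^nX-\mu_\xi(t_{i-1})h_n)^2/(2[K_\s(0)-K_\s(h_n)])$ and on $\s$ through the variance factor $v_n(\s):=2h_n^{-2}[K_\s(0)-K_\s(h_n)]$, the cross derivative $\p_\xi\p_\s\ell_n$ will be asymptotically negligible after the scaling $D_n$, which is why $\Sigma(\th_0)$ is block diagonal. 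I would therefore treat the $\xi$-block and the $\s$-block essentially separately, using that $v_n(\s)\to -\p_t^2K_\s(0)$ uniformly in $\s\in\ol\Pi$ by B\ref{as:K2}, and that $K_\s(0)-K_\s(h_n)\sim -\tfrac12\p_t^2K_\s(0)h_n^2$.

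For the $\xi$-block I would compute $\p_\xi\ell_n(\xi_0,\s) = -\tfrac{1}{n}\sum_i \tfrac{(\D_i^nX-\mu_{\xi_0}(t_{i-1})h_n)\,\p_\xi\mu_{\xi_0}(t_{i-1})h_n}{[K_\s(0)-K_\s(h_n)]}$, and note $\D_i^nX-\mu_{\xi_0}(t_{i-1})h_n = \D_i^nZ + r_{i,n}$ where $r_{i,n}$ collects the $O(h_n^2)$ deterministic remainder from replacing $\int_{t_{i-1}}^{t_i}\mu(s)\df s$ by $\mu(t_{i-1})h_n$ (controlled by B\ref{as:mu}). Using $K_\s(0)-K_\s(h_n)\asymp h_n^2$, the leading stochastic term is $\propto h_n^{-1}\cdot\tfrac1n\sum_i \D_i^nZ\,\p_\xi\mu_{\xi_0}(t_{i-1})$; its variance is $\propto h_n^{-2}\cdot\tfrac1{n^2}\sum_i 2[K_\s(0)-K_\s(h_n)]\{\p_\xi\mu_{\xi_0}(t_{i-1})\}^{\otimes2}+(\text{covariance terms})$, and since $\D_i^nZ$ are asymptotically uncorrelated across $i$ (the off-diagonal covariances $K(0)-2K(h_n)+K(2h_n)=O(h_n^4)$ by A\ref{as:3}/B\ref{as:K2} are summably small) this behaves like $\tfrac{1}{n^2}\sum_i h_n^{-2}\cdot h_n^2\{\p_\xi\mu_{\xi_0}(t_{i-1})\}^{\otimes2}\cdot(-\p_t^2K_{\s_0}(0))$. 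Here the nonstandard rate enters: $\tfrac1n\sum_i\{\p_\xi\mu_{\xi_0}(t_{i-1})\}^{\otimes2}h_n \to \int_0^\infty\{\p_\xi\mu_\xi(t)\}^{\otimes2}\df t$ by the DRI convergence \eqref{DRI} under B\ref{as:DRI-mu2}, so the natural normalization is $h_n^{-1/2}$ rather than $\sqrt{nh_n}$. I would establish the CLT for $h_n^{-1/2}\p_\xi\ell_n(\xi_0,\s_0)$ via the limit theorems in Appendix~\ref{app:A}/\ref{app:B} (a CLT for weighted sums of Gaussian increments of an ergodic $Z$), with B\ref{as:DRI-mu3} providing the dominating integrable envelope needed to pass to the limit and to kill the tail $\sum_{i>?}$ contributions. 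The Hessian $\p_\xi^2\ell_n(\xi_0,\s_0)\to (-\p_t^2K_{\s_0}(0))^{-1}\cdot 2\int_0^\infty\{\p_\xi\mu_\xi(t)\}^{\otimes2}\df t$ by the same DRI argument plus a uniform LLN over a shrinking neighborhood of $\th_0$ (using B\ref{as:DRI-mu4} to control the third-order term in the Taylor remainder), giving the stated $\xi$-block $-\p_t^2K_{\s_0}(0)[2\int_0^\infty\{\p_\xi\mu_\xi\}^{\otimes2}]^{-1}$.

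For the $\s$-block, write $g_n(\s):=\log v_n(\s)$; then $\p_\s\ell_n = \tfrac1n\sum_i(\D_i^nX-\mu_\xi h_n)^2\cdot\p_\s(v_n(\s)^{-1}h_n^{-2})\cdot(\text{const}) + \p_\s g_n(\s)$, and after substituting the true $\xi_0$ and using $\E[(\D_i^nZ)^2]=2[K_{\s_0}(0)-K_{\s_0}(h_n)]=v_n(\s_0)h_n^2$, the recentred score is $\sqrt n\,\p_\s\ell_n(\th_0)\approx \tfrac12\p_\s\log(-\p_t^2K_{\s_0}(0))\cdot\tfrac{1}{\sqrt n}\sum_i\big(1-(\D_i^nZ)^2/(v_n(\s_0)h_n^2)\big)$; the bracketed variables are (asymptotically i.i.d., mean zero) centred squared Gaussians with variance $2$, so the martingale/ergodic CLT from Appendix~\ref{app:A} yields asymptotic variance $V_1(\s_0)=(\tfrac12\p_\s\log(-\p_t^2K_{\s_0}(0)))^{\otimes2}\cdot 2$... and I would keep careful track of the factor $2$ so that it matches the definition of $V_1$ in the statement (the $\tfrac12$-square times variance $2$ gives exactly $V_1$). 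The Hessian $\p_\s^2\ell_n(\th_0)\to \p_\s^2\log(-\p_t^2K_{\s_0}(0)) = V_2(\s_0)$ by a uniform LLN plus the fact that the quadratic-term contribution to the second derivative has expectation cancelling against $\p_\s^2 g_n$ at $\s_0$ (a standard information-identity computation, using B\ref{as:K2} to differentiate through the limit). The delta/sandwich formula $\Sigma_{\s\s}=V_2^{-1}V_1V_2^{-1}$ then follows. The cross block: $D_n$-scaled, $\p_\xi\p_\s\ell_n(\th_0)$ carries a mismatched normalization $h_n^{-1/2}\cdot\sqrt n\cdot(\text{a term that is }O_p(h_n^{1/2}/\sqrt n)$ or smaller$)$ — concretely it contains $\tfrac1n\sum_i\D_i^nZ\cdot(\text{bounded})\cdot h_n^{-1}\cdot\p_\s(\cdots)$, whose standard deviation is $O(h_n^{-1/2}n^{-1/2})$, so after multiplying by $\sqrt{h_n^{-1}}\cdot\sqrt n=\sqrt{n/h_n}$... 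I would instead argue directly that the scaled cross-Hessian is $o_p(1)$ because one factor is a mean-zero sum of increments (hence $O_p$ of its own slow rate) while the pairing with the deterministic $\p_\s v_n$ factor produces an extra $h_n$, making the relevant product vanish; the block-diagonality of $\Sigma(\th_0)$ is the manifestation of this.

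The main obstacle I expect is \textbf{controlling the dependence among the Gaussian increments $\D_i^nZ$ uniformly} — unlike the Markov/SDE case there is no conditional-independence structure, so establishing the CLT and the ULLN requires the ergodic/mixing limit theorems of Appendix~\ref{app:A} together with precise $O(h_n^k)$ bounds on the covariances $K(0)-2K(h_n)+K((i-j)h_n)$ coming from A\ref{as:3}--A\ref{as:4}, and verifying that the DRI hypotheses B\ref{as:DRI-mu2}--B\ref{as:DRI-mu3} are exactly what is needed to interchange the sum $\tfrac1n\sum_i(\cdot)h_n$ with the integral $\int_0^\infty(\cdot)\df t$ both for the variance computation and for the negligibility of edge/tail terms; getting the normalizing constants (the factors of $2$, the $h_n^{-2}$ scaling in the log) to line up with the stated $\Sigma(\th_0)$ is the bookkeeping-heavy but routine part.
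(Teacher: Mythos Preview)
Your proposal is correct and follows essentially the same route as the paper's proof: a Taylor expansion of the score around $\theta_0$, a block-diagonal analysis separating the $\xi$- and $\sigma$-components, DRI-based convergence for the $\xi$-Hessian and variance, and a CLT for weakly dependent Gaussian increments. Two minor points worth noting: (i) the paper introduces an auxiliary scaling matrix $C_n=\mathrm{diag}(nh_n I_p, I_q)$ so that the sandwich reads $D_n(\wh\theta_n-\theta_0)=-\{C_nD_n\p_\theta^2\ell_n D_n^{-1}\}^{-1}C_nD_n\p_\theta\ell_n(\theta_0)$, which makes the block-diagonal limit of the Hessian cleaner to state; and (ii) the CLT for both blocks is obtained not via a martingale argument but via Neumann's CLT for triangular arrays of weakly dependent variables, with the covariance-bound condition verified through explicit kernel integrals (Lemmas~\ref{lem:cov-bound-summable} and~\ref{lem:cov-bound-summable-Y}) --- this is precisely the ``controlling the dependence'' obstacle you correctly flagged as the main difficulty.
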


\begin{remark}
The rate of convergence for the estimator $\widehat{\xi}_n$ is not the standard $\sqrt{n h_n}$, but rather the nonstandard rate $h_n^{-1/2}$.
Since $\sqrt{n h_n}/h_n^{-1/2} = \sqrt{n h_n^2}$, this rate is faster under the usual high-frequency condition $n h_n^2 \to 0$.
However, this does not imply that $\xi_0$ is estimable from observations over a bounded time interval under the sole condition $h_n \to 0$.
In fact, the condition $n h_n \to \infty$ is essential, and the nonstandard rate originates from the DRI property, reflecting the accumulation of local information through high-frequency sampling; see Remark~\ref{rem:DRI}.
Under this regime, extending the observation horizon does not increase the available information, which is intrinsic to inference based on first-order increments on bounded intervals.
By contrast, in frameworks where standard Riemann approximation applies, such as
\[
\frac{1}{n} \sum_{i=1}^n \mu_\xi(t_{i-1}) \to \int_0^1 \mu_\xi(s) \,\mathrm{d}s \quad (n \to \infty),
\]
one expects the standard convergence rate $\sqrt{n h_n}$ to be recovered, as is typically the case in ergodic or small-noise models.
\end{remark}

\subsection{Moment estimators}\label{sec:moment}
As discussed in Section~\ref{sec:local-Gauss} the local-Gauss contrast exploits only the local quadratic structure
of the covariance kernel through first-order increments.
Consequently, some covariance parameters may remain unidentified when different kernels share the same local curvature behavior.

To recover additional information on the covariance structure, we now introduce auxiliary moment estimators based on
global marginal statistics of the de-trended process.
Unlike the local-Gauss contrast, these estimators utilize the stationary distribution of the Gaussian component
and can identify parameters that do not appear in the local increment asymptotics.

Suppose that an estimator of $\xi_0$ is given, say $\wh{\xi}_n$; see Remark \ref{rem:xi}, and let 
\[
Y_i^n = X_{t_i} - \int_0^{t_i} \mu_{\wh{\xi}_n}(s)\,\df s,\quad i=1,2,\dots,n. 
\]
Consider the following $\R^q$-valued estimating functions: for $f=(f_1,\dots,f_q):\R\to \R^q$, 
\begin{align*}
\Phi_n(\s) &= \frac{1}{n}\sum_{i=1}^n f(Y_{i-1}^n) - \int_\R f(z)\phi_{K_\s(0)}(z)\,\df z. 
\end{align*}
The $Z$-estimator is given by 
\begin{align}
\Phi_n(\wt{\s}_n) = 0\quad (k=1,2). \label{Z-est}
\end{align}
Since it follows by Lemmas \ref{lem:fY} that, for suitable functions $f$ and $G$, 
\begin{align*}
\Phi_n(\s) \toP \Phi(\s)&:=  \int_{\R^q}f(z)\l[\phi_{K_{\s_0}(0)}(z) - \phi_{K_\s(0)}(z)\r]\,\df z;
\end{align*}
as $n\to \infty$ uniformly in $\s\in \ol{\Pi}$. 
Then, $\wt{\s}_n$ can be consistent to $\s_0$ under suitable reguralities.

\begin{thm}\label{thm:M-consist}
Let $f : \R \to \R^q$ be a measurable function such that $f \in C^1(\R)$ and there exists $C > 0$ such that
\[
|f(x)| + |\p_x f(x)| \lesssim 1 + |x|^C.
\]
Suppose the assumptions A\ref{as:2}--A\ref{as:4} and B\ref{as:DRI-mu3} hold, and that a consistent estimator $\widehat{\xi}_n \toP \xi_0$ is given. 
Moreover, suppose the following identifiability condition is satisfied:
\begin{align}
\inf_{\sigma \in \overline{\Pi} : |\sigma - \sigma_0| > \varepsilon} |\Phi(\sigma)| > 0 \quad \text{for all } \varepsilon > 0.
\label{Phi-identifiable}
\end{align}
Then the $Z$-estimator $\widehat{\sigma}_n$ defined by \eqref{Z-est} is consistent:
\[
\wt{\s}_n \toP \sigma_0, \quad n \to \infty. 
\]
\end{thm}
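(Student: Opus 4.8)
The plan is to run the classical consistency argument for $Z$-estimators: establish (i) uniform-in-$\sigma$ convergence in probability of $\Phi_n$ to $\Phi$, and (ii) that $\sigma_0$ is a well-separated zero of $\Phi$, the latter being exactly the content of the identifiability condition \eqref{Phi-identifiable}. Granting (i), for fixed $\varepsilon>0$ put $\eta := \inf_{\sigma\in\overline{\Pi},\,|\sigma-\sigma_0|>\varepsilon}|\Phi(\sigma)|$, which is strictly positive by \eqref{Phi-identifiable}. On the event $\{\sup_{\sigma\in\overline{\Pi}}|\Phi_n(\sigma)-\Phi(\sigma)|<\eta/2\}$, whose probability tends to $1$, the inequality $|\widetilde{\sigma}_n-\sigma_0|>\varepsilon$ would yield $\eta\le|\Phi(\widetilde{\sigma}_n)|\le|\Phi(\widetilde{\sigma}_n)-\Phi_n(\widetilde{\sigma}_n)|+|\Phi_n(\widetilde{\sigma}_n)|<\eta/2$, since $\Phi_n(\widetilde{\sigma}_n)=0$ by \eqref{Z-est}, a contradiction. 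Hence $\P(|\widetilde{\sigma}_n-\sigma_0|>\varepsilon)\to0$, i.e. $\widetilde{\sigma}_n\toP\sigma_0$.

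So the work is in (i). The first observation is the exact cancellation
\[
\Phi_n(\sigma)-\Phi(\sigma)=\frac1n\sum_{i=1}^n f(Y_{i-1}^n)-\int_{\R^q}f(z)\,\phi_{K_{\sigma_0}(0)}(z)\,\df z,
\]
whose right-hand side does not depend on $\sigma$, so the uniform convergence reduces to convergence in probability of this single quantity to $0$. I would write $Y_{i-1}^n=Z_{t_{i-1}}+R_{i-1}^n$ with $R_{i-1}^n:=\int_0^{t_{i-1}}(\mu_{\xi_0}(s)-\mu_{\widehat{\xi}_n}(s))\,\df s$; by the mean value theorem together with the integrable envelope $\gamma$ of assumption B\ref{as:DRI-mu3}, $\sup_i|R_{i-1}^n|\le\|\gamma\|_{L^1}|\widehat{\xi}_n-\xi_0|=:\delta_n$, and $\delta_n\toP0$ because $\widehat{\xi}_n\toP\xi_0$. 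The polynomial growth of $\partial_x f$ then gives $|f(Y_{i-1}^n)-f(Z_{t_{i-1}})|\lesssim\delta_n(1+|Z_{t_{i-1}}|^C+\delta_n^C)$, so averaging leaves an error of order $\delta_n\cdot\frac1n\sum_{i=1}^n(1+|Z_{t_{i-1}}|^C)$; this average is $O_p(1)$ by the high-frequency law of large numbers for the stationary ergodic Gaussian process in Appendix~\ref{app:A} (where A\ref{as:2}--A\ref{as:4} and $nh_n\to\infty$ enter), so the error is $o_p(1)$. The same law of large numbers gives $\frac1n\sum_{i=1}^n f(Z_{t_{i-1}})\toP\E[f(Z_0)]=\int f(z)\phi_{K_{\sigma_0}(0)}(z)\,\df z$, and combining the two displays finishes (i). This is essentially Lemma~\ref{lem:fY}, which could be invoked directly instead.

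The main obstacle is precisely the stability estimate above: the perturbation $R_{i-1}^n$ depends on the whole data set through $\widehat{\xi}_n$, so one cannot condition on it and treat $\frac1n\sum f(Y_{i-1}^n)$ as a clean ergodic average. The way around this is to decouple the \emph{magnitude} of the perturbation from the index $i$ via the uniform bound $\sup_i|R_{i-1}^n|\le\|\gamma\|_{L^1}|\widehat{\xi}_n-\xi_0|$, which pulls the small factor $|\widehat{\xi}_n-\xi_0|$ out of the sum and reduces everything to the non-random moment LLN $\frac1n\sum(1+|Z_{t_{i-1}}|^C)=O_p(1)$; the polynomial growth hypothesis on $f$ and $\partial_x f$ is exactly what makes this moment controllable. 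A secondary point worth a remark is that the argument as written presumes an exact root $\widetilde{\sigma}_n$ of $\Phi_n$ exists; if one prefers not to assume this, the same proof applies verbatim to an approximate root satisfying $|\Phi_n(\widetilde{\sigma}_n)|=o_p(1)$, for instance a near-minimizer of $|\Phi_n|$ over the compact set $\overline{\Pi}$.
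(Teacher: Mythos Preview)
Your proof is correct and follows essentially the same route as the paper, which simply invokes Lemma~\ref{lem:fY} for the uniform convergence and then appeals to Theorem~5.9 in van der Vaart~\cite{v98} for the $Z$-estimator consistency; you have unpacked both of these black boxes explicitly, and your argument for (i) is line-by-line the proof of Lemma~\ref{lem:fY}. Your observation that $\Phi_n(\sigma)-\Phi(\sigma)$ is actually \emph{independent of $\sigma$} is a genuine simplification over the paper's presentation: Lemma~\ref{lem:fY} is stated for general $f(x,\theta)$ and therefore has to establish uniformity via a tightness/derivative bound, whereas here $f$ does not depend on $\sigma$ and the uniformity comes for free.
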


\begin{remark}[Two-stage interpretation and Assumption B\ref{as:id}] 
Theorem \ref{thm:M-consist} provides preliminary estimation of a subset of parameters via the moment method, which can be regarded as nuisance parameters in the subsequent analysis.
After this reduction, Assumption B\ref{as:id} is imposed only on the remaining key parameter and characterizes the information content of first-order increments under high-frequency sampling.
Under this reduced parameterization, simultaneous estimation is feasible and leads to the contrast-based M-estimator studied in Section \ref{sec:M-est}.
The apparent one-dimensional restriction induced by Assumption B\ref{as:id} therefore reflects an intrinsic limitation of first order increments rather than a vacuity of the inference procedure.
\end{remark}

\begin{thm}\label{thm:M-asy-norm}
Suppose the same assumptions as in Theorem~\ref{thm:M-consist}, and that the function $f : \mathbb{R} \to \mathbb{R}^q$ is uniformly bounded and of polynomial growth.
Suppose further that the limiting function $\Phi : \mathbb{R}^q \to \mathbb{R}^q$ is continuously differentiable at $\sigma_0 \in \Pi$, 
and that the Jacobian matrix $A := \p_\sigma \Phi(\sigma_0) \in \mathbb{R}^{q \times q}$ is invertible.
Moreover, suppoe that the following limit exists:
\[
\Gamma^2 := \lim_{n \to \infty} n\, \mathrm{Var}\l( \Phi_n(\sigma_0) \r) \in \mathbb{R}^{q \times q}.
\]

Then, the asymptotic normality holds true:
\[
\sqrt{n} \l( \wt{\sigma}_n - \sigma_0 \r) 
\xrightarrow{d} \mathcal{N} \l( \mb{0},\ A^{-1} \Gamma^2 A^{-\top} \r), \quad n \to \infty. 
\]
\end{thm}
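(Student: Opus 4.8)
The plan is to follow the classical route for asymptotic normality of $Z$-estimators: expand the estimating equation $\Phi_n(\wt\s_n)=0$ around $\s_0$, control the remainder via consistency plus a uniform bound on $\p_\s\Phi_n$, and invoke a CLT for the normalized score $\sqrt n\,\Phi_n(\s_0)$. First I would write, by the mean value theorem applied component-wise,
\[
0=\Phi_n(\wt\s_n)=\Phi_n(\s_0)+\l(\int_0^1 \p_\s\Phi_n\big(\s_0+u(\wt\s_n-\s_0)\big)\,\df u\r)(\wt\s_n-\s_0),
\]
so that $\sqrt n(\wt\s_n-\s_0)=-\big(\bar A_n\big)^{-1}\sqrt n\,\Phi_n(\s_0)$ on the event that $\bar A_n:=\int_0^1 \p_\s\Phi_n(\s_0+u(\wt\s_n-\s_0))\,\df u$ is invertible. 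By Theorem~\ref{thm:M-consist} we have $\wt\s_n\toP\s_0$, and $\p_\s\Phi_n(\s)=-\int_\R f(z)\p_\s\phi_{K_\s(0)}(z)\,\df z$ is a deterministic function of $\s$ alone (the empirical part of $\Phi_n$ does not depend on $\s$), so $\bar A_n=-\int_\R f(z)\big(\int_0^1\p_\s\phi_{K_{\s_0+u(\wt\s_n-\s_0)}(0)}(z)\,\df u\big)\df z\toP \p_\s\Phi(\s_0)=A$ by continuity of $\s\mapsto\p_\s\phi_{K_\s(0)}$ together with the polynomial-growth domination of $f$ and dominated convergence; since $A$ is assumed invertible, $\bar A_n$ is invertible with probability tending to one, and $(\bar A_n)^{-1}\toP A^{-1}$.

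The core analytic step is the CLT for $\sqrt n\,\Phi_n(\s_0)$. Here I would decompose
\[
\Phi_n(\s_0)=\underbrace{\frac1n\sum_{i=1}^n\Big(f(Z_{t_{i-1}})-\E[f(Z_{t_{i-1}})]\Big)}_{=:S_n}+\underbrace{\frac1n\sum_{i=1}^n\Big(f(Y_{i-1}^n)-f(Z_{t_{i-1}})\Big)}_{=:R_n}+\underbrace{\Big(\E[f(Z_{0})]-\int_\R f(z)\phi_{K_{\s_0}(0)}(z)\,\df z\Big)}_{=0},
\]
using that $Z$ is centered stationary with marginal ${\cal N}(0,K_{\s_0}(0))$, so the last bracket vanishes. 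For $S_n$: since $\Phi_n$ only uses equally-spaced marginals $Z_{t_{i-1}}$, $S_n$ is a normalized partial sum of a stationary sequence $g(Z_{t_{i-1}})$ with $g=f-\E f$, and the mixing property of $Z$ under A\ref{as:2} (weak mixing, Maruyama~\cite{m49}; this should be upgraded to a quantitative decay of covariances via the kernel decay, as in Appendix~\ref{app:A}) gives absolute summability of $\Cov(g(Z_0),g(Z_{kh_n}))$ after the $\frac1n$-normalization — in fact $n\Var(S_n)\to \sum_{k\in\Z}\lim_{n}\Cov\big(g(Z_0),g(Z_{|k|h_n})\big)$, and combined with a Lindeberg/blocking argument (or a direct Gaussian-subordination CLT for functionals of stationary Gaussian sequences) yields $\sqrt n\,S_n\toD{\cal N}(0,\cdot)$. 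The hypothesis that $\Gamma^2=\lim_n n\Var(\Phi_n(\s_0))$ exists packages exactly this limiting variance, so I would phrase the CLT as: $\sqrt n\,\Phi_n(\s_0)\toD{\cal N}(\zero,\Gamma^2)$, provided $R_n$ is negligible at rate $\sqrt n$.

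The main obstacle — and the step that needs the most care — is showing $\sqrt n\,R_n\toP\zero$, i.e. that replacing $Z$ by the plug-in residual $Y_{i-1}^n$ costs nothing asymptotically. By definition $Y_i^n-Z_{t_i}=\int_0^{t_i}\big(\mu_{\xi_0}(s)-\mu_{\wh\xi_n}(s)\big)\,\df s$, which is a random quantity (through $\wh\xi_n$) but \emph{not} small uniformly in $i$ — it is an integral over a growing interval. Using $f\in C^1$ with polynomial growth, a first-order Taylor expansion gives $f(Y_{i-1}^n)-f(Z_{t_{i-1}})=\p_x f(\wt Z_i)\,\big(\int_0^{t_{i-1}}(\mu_{\xi_0}-\mu_{\wh\xi_n})(s)\,\df s\big)$ for an intermediate point; I would then bound $\big|\int_0^{t_{i-1}}(\mu_{\xi_0}-\mu_{\wh\xi_n})(s)\,\df s\big|$ using the DRI/$L^1$-domination B\ref{as:DRI-mu3} — which forces $\mu_\xi\in L^1([0,\infty))$ uniformly in a neighborhood of $\xi_0$ — so that $\sup_i\big|\int_0^{t_{i-1}}(\mu_{\xi_0}-\mu_{\wh\xi_n})(s)\,\df s\big|\le \int_0^\infty|\mu_{\xi_0}(s)-\mu_{\wh\xi_n}(s)|\,\df s\toP 0$ by dominated convergence and $\wh\xi_n\toP\xi_0$. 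The uniform boundedness of $f$ lets me control the $\p_x f(\wt Z_i)$ factors (or handle moments of $Z_{t_{i-1}}$ via stationarity), giving $\frac1n\sum_{i=1}^n|f(Y_{i-1}^n)-f(Z_{t_{i-1}})|\lesssim o_p(1)\cdot\frac1n\sum_{i=1}^n(1+|Z_{t_{i-1}}|^{C})=o_p(1)$ by the ergodic theorem; but the delicate point is to gain an extra $\sqrt n$, which requires showing that either the plug-in error is $o_p(n^{-1/2})$ in the relevant sense, or that it enters only through a telescoping/compensated term. Concretely, since $\wh\xi_n-\xi_0$ converges at rate $h_n^{-1/2}$ (Theorem~\ref{thm:asy-norm}) and $nh_n\to\infty$, one has $\sqrt n\,(\wh\xi_n-\xi_0)=\sqrt{nh_n}\cdot\big(h_n^{1/2}(\wh\xi_n-\xi_0)\big)/\sqrt{\cdot}$ — this does \emph{not} automatically vanish, so I anticipate needing the additional structural fact that $\frac1n\sum_{i=1}^n\p_x f(Z_{t_{i-1}})\int_0^{t_{i-1}}\p_\xi\mu_{\xi_0}(s)\,\df s$ converges (by DRI of $\p_\xi\mu_\xi$, B\ref{as:DRI-mu2}, the inner integral is bounded) and then that the product with $\sqrt n(\wh\xi_n-\xi_0)$ is controlled — most cleanly by strengthening the hypothesis to $nh_n^2\to$ something, or by absorbing this cross term into the definition of $\Gamma^2$. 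I would flag that the cleanest formulation assumes the plug-in is asymptotically negligible and state $R_n=o_p(n^{-1/2})$ as a consequence of $\sqrt n\int_0^\infty|\mu_{\xi_0}-\mu_{\wh\xi_n}|\,\df s\toP0$ under the rate from Theorem~\ref{thm:asy-norm}, deferring the borderline case; the remainder of the argument (Slutsky to combine $(\bar A_n)^{-1}\toP A^{-1}$ with $\sqrt n\,\Phi_n(\s_0)\toD{\cal N}(\zero,\Gamma^2)$) is then routine and delivers $\sqrt n(\wt\s_n-\s_0)\toD{\cal N}(\zero,A^{-1}\Gamma^2 A^{-\top})$.
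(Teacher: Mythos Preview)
Your approach mirrors the paper's almost exactly: linearize $\Phi_n(\wt\s_n)=0$ via the integral mean value theorem, show $\bar A_n\toP A$, and establish a CLT for $\sqrt n\,\Phi_n(\s_0)$ by replacing $Y_{i-1}^n$ with $Z_{t_{i-1}}$ and controlling the plug-in remainder. Your observation that $\p_\s\Phi_n(\s)$ is a \emph{deterministic} function of $\s$ (since only the second term of $\Phi_n$ depends on $\s$) is a clean simplification the paper does not make; it reduces $\bar A_n\toP A$ to ordinary continuity plus $\wt\s_n\toP\s_0$, with no stochastic uniform-convergence argument needed.

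Where the paper is more specific than you: for the CLT on $\sqrt n\,\Phi_n(\s_0)$ it invokes Neumann's triangular-array CLT (Theorem~2.1 of \cite{n13}) and verifies its four conditions --- square integrability, a Lindeberg-type condition, a weak-dependence covariance bound of the form $|\Cov(g(\cdot),\zeta_t)|\le(\E\zeta_t^2+\E\zeta_{s_u}^2+n^{-1})\theta_r$ with $\sum_r\theta_r<\infty$ coming from $K\in L^1$, and convergence of variance to $\Gamma^2$. Your ``Gaussian-subordination/blocking'' gesture should be made concrete in this way; the weak-dependence bound is supplied by Lemma~\ref{lem:cov-bound-summable} in the appendix.

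On the remainder $R_n$: you are right that this is the delicate step, and your analysis of why $\sqrt n\,R_n\toP0$ is not automatic is sharper than the paper's. The paper simply asserts that $\sup_i|\Delta_i^n|=o_P(1)$ (which follows from B\ref{as:DRI-mu3} and $\wh\xi_n\toP\xi_0$, as you note) ``hence $R_n^{(j)}=o_P(n^{-1/2})$'', and later in the variance verification writes $\sup_i|R_i^n|=o_P(n^{-1/2})$ without supplying the extra $\sqrt n$ rate. So the concern you flag --- that consistency of $\wh\xi_n$ alone does not obviously yield $o_P(n^{-1/2})$, and that one may need either the $h_n^{-1/2}$ rate from Theorem~\ref{thm:asy-norm} together with $nh_n\to\infty$ in a specific balance, or an orthogonality/cancellation argument --- is legitimate, and the paper does not resolve it more fully than you do.
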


\begin{remark}[Alternative moment-based estimators using paired observations]
Beyond the moment function $\Phi_n(\sigma)$ based on single-time statistics, one can construct alternative moment-type estimators using local pairs of de-trended observations $(Y_{i-1}^n, Y_i^n)$. Lemma~\ref{lem:GY} provides a general convergence result for statistics of the form
\begin{align*}
\frac{1}{n h_n^2} \sum_{i=1}^n G(Y^n_{i-1}, Y^n_i)&\toP  \frac{\p^2 K(0)}{2 K(0)}  \int_\R \l[\p_y G(z,z)\,z - \p_y^2 G(z, z)K(0)\r] \, \phi_{K(0)}(z)\,\df z,
\end{align*}
uniformly in $\th \in \ol{\Theta}$ under $h_n\to 0$ as $n\to \infty$, where $Y_i^n= X_{t_i} - \int_0^{t_i} \mu_{\wh{\xi}_n}(s)\,\df s$ and 
$\wh{\xi}_n$ is a consistent estimator for $\xi_0$: $\wh{\xi}_n\toP \xi_0$. 
The function $G$ is a smooth function of polynomial growth with $G(x,x)=0$. For example:
\begin{itemize}
  \item $G(x,y) = (y - x)^2$ yields the second-order increment moment. 
  \item $G(x,y) = (y - x) y^3$ captures nonlinear interactions between local variation and the magnitude. 
\end{itemize}
Such functionals can be used to construct moment equations for estimating kernel parameters. Consistency follows directly from Lemma~\ref{lem:GY}; see Section \ref{sec:drift+gauss}. 
\end{remark}

\begin{remark}[Estimation of the fourth derivative $\p_t^4 K(0)$]\label{rem:K4}
In the moment-based approach discussed so far, only the quantities $K(0)$ and $\p_t^2 K(0)$ can be extracted directly. However, if the kernel $K$ depends on three or more parameters, as in the case of rational quadratic kernels, higher-order information such as $\p_t^4 K(0)$ becomes essential for parameter identification and estimation. 
Under $K\in C^4([0,\infty))$, the following procedure provides a general and practical way to incorporate such information.

Let us define $\d := -\p_t^2 K(0) > 0$, so that
\[
\mathbb{E}[(\D_i^n Z)^2] = 2\{K(0) - K(h_n)\} = \d h_n^2 - \frac{1}{12} \p_t^4 K(0) h_n^4 + o(h_n^4), \quad h_n \to 0.
\]
Noting that $\D_i^n Z \sim \mathcal{N}(0, \mathbb{E}[(\D_i^n Z)^2])$, we obtain by Gaussianity
\[
\mathbb{E}[(\D_i^n Z)^4] = 3 \left( \mathbb{E}[(\D_i^n Z)^2] \right)^2 
= 3 \d^2 h_n^4 - \frac{1}{2} \d \p_t^4 K(0) h_n^6 + o(h_n^6).
\]
Hence, the fourth derivative $\p_t^4 K(0)$ is identified by
\[
\p_t^4 K(0) = \frac{2}{\d h_n^2} \left\{3 \d^2 - \mathbb{E}\l[\l(\frac{\D_i^n Z}{h_n}\r)^4\r]  \right\} + o(1).
\]
Then, by Lemma~\ref{lem:f-DX}, the quantity $\d = -\p_t^2 K(0)$ is consistently estimated by
\[
\wh{\d}_n := \frac{1}{n h_n^2} \sum_{i=1}^n \left( \D_i^n X - \mu_{\widehat{\xi}_n}(t_{i-1}) h_n \right)^2.
\]
Using this, we define the estimator of $\p_t^4 K(0)$ as
\begin{align}
\widehat{\p_t^4 K(0)} := \frac{1}{h_n^2} \left\{3 \wh{\d}_n - \frac{1}{n h_n^4 \wh{\d}_n} \sum_{i=1}^n \left( \D_i^n X - \mu_{\widehat{\xi}_n}(t_{i-1}) h_n\right)^4  \right\}.\label{est:K4}
\end{align}
Then, this estimator is consistent as $n \to \infty$, $h_n \to 0$, and $n h_n \to \infty$.
\end{remark}

\section{Examples and simulations} \label{sec:example}

\subsection{Drifted Gaussian processes with Gaussian Kernels}\label{sec:drift+gauss}
Consider a model \eqref{model} with 
\[
\mu_\xi(s) = \xi w(s),\quad K_\s(t) = \a \exp\l(-\frac{\b}{2} t^2\r),
\]
where $w:[0,\infty) \to \R$, is a known function, directly Riemann integrable function, and $\xi \in \R$, $\g:=\a\b\in \R_+^2$. 
Note that, in this model, we can not identify $\a_0$ and $\b_0$ separately, but only $\g_0 := \a_0\b_0$ because $\p^2_t K_\s(0) = - \a\b$. 
Then, our local-Gauss contrast function is given by 
\begin{align*}
\ell_n(\xi,\s) &= \frac{1}{n}\sum_{i=1}^n \frac{\l(\D_i^n X - \xi w_{i-1} h_n \r)^2}{2\a [1 - e^{-\b h_n^2/2}]} + \log \l(2 h_n^{-2}\a[1 - e^{-\b h_n^2/2}]\r) \\
&= \frac{1}{nh_n^2 }\sum_{i=1}^n \frac{\l(\D_i^n X - \xi w_{i-1} h_n \r)^2}{\g} + \log \g + o_p(1), 
\end{align*}
where $w_{i-1} = w(t_{i-1})$. 
Hence, we obtain the following $M$-estimator by solving the estimating equation  $\n \ell_n(\th) = 0$: 
\begin{align}
\wh{\xi}_n = \frac{\sum_{i=1}^n w_{i-1}\D_i^nX }{h_n \sum_{i=1}^n w_{i-1}^2},\quad \wh{\g}_n = \frac{1}{nh_n^2}\sum_{i=1}^n \l(\D_i^n X - \wh{\xi}_n w_{i-1} h_n \r)^2, \label{M-est-Gauss1}
\end{align}
which are asymptotically normal estimators for $\xi_0$ and $\g_0$, respectively, with convergence rates
$h_n^{-1/2}$ for $\widehat{\xi}_n$ and $\sqrt{n}$ for $\widehat{g}_n$.

For separate estimation of $\a$ and $\b$, we shall consider the method of moment. 
For example, we can use Lemmas \ref{lem:fY} and \ref{lem:GY} with $f(x,\th)=x^2$ and $G(x,y,\th) = (y-x)^2$, respectively: it follows for 
\[
Y_i^n := X_{t_i} -  \wh{\xi}_n\int_0^{t_i} w(s)\,\df s;\quad \D_i^n Y := Y_i^n - Y_{i-1}^n, 
\]
that 
\begin{align*}
\frac{1}{n}\sum_{i=1}^n (Y_{i-1}^n)^2 \toP \a;\quad 
\frac{1}{nh_n^2}\sum_{i=1}^n (\D_i^n Y)^2 \toP \a\b;\quad 
\end{align*}
as $n\to \infty$. For example, using the first convergence, we can estimate $\a$ and $\b$ separately by, for example, 
\begin{align}
\wh{\a}_n = \frac{1}{n}\sum_{i=1}^n (Y_{i-1}^n)^2,\quad 
\wh{\b}_n = \frac{\sum_{i=1}^n (\D_i^n Y)^2}{h_n^2\sum_{i=1}^n (Y_{i-1}^n)^2}\ \l(= \frac{\wh{\g}_n}{\wh{\a}_n}\r),  \label{M-est-Gauss2}
\end{align}
both of which are asymptotically normal with the rate $\sqrt{n}$. 

\begin{remark}
In this model, the local-Gauss contrast function depends only on the product $g = \alpha\beta$ through the expansion
\[
K_\sigma(0) - K_\sigma(h_n) = \alpha \l(1 - e^{-\beta h_n^2/2}\r) = \tfrac{1}{2} \alpha\beta h_n^2 + o(h_n^3),
\]
as $h_n \to 0$. Therefore, $\alpha$ and $\beta$ are not separately identifiable from the contrast function alone. This issue is resolved by the method of moments, which utilizes higher-order statistics of the de-trended process $Y_i^n$. 
Alternatively, composite likelihood methods such as Bennedsen et al.~\cite{betal24} use multivariate Gaussian densities over $q$-dimensional blocks (e.g., $q=3$), incorporating multiple lagged covariances like $K(h_n)$ and $K(2h_n)$. This richer information structure enables the separate identification of $\alpha$ and $\beta$ through nonlinear relationships among the covariances.
\end{remark}

\subsection{Ornstein-Uhlenbeck processes}\label{ex:O-U}

Consider the stationary O-U process given by$X_t = Z_t$ with Exponential (Ornstein-Uhlenbeck) kernel 
\[
K_\sigma(t) = \alpha e^{-\beta |t|}, 
\]
and the target parameter is $\theta:= \s = (\alpha, \beta)$. 

Although the Ornstein-Uhlenbeck kernel does not satisfy the smoothness assumption A\ref{as:3},
this example illustrates that our local-Gauss methodology can still be applied through mollified approximations.
Moreover, this example clarifies the connection between the present Gaussian process approach and classical high-frequency inference for diffusion processes.

\subsubsection{Classical diffusion inference}
As is well known, the process $X$ satisfies the following stochastic differential equation: 
\begin{align*}
 \df X_t = -\b X_t\,\df t + \sqrt{2\a\b}\df W_t,  \quad X_0=Z_0,  
\end{align*}
for a Wiener process $W$. In the context of inference for SDEs, under the asymptotic regime $h_n \to 0$, $n h_n \to \infty$, and $n h_n^3 \to 0$, local-Gaussian contrast by Kessler \cite{k97}: 
\[
l_n(\alpha, \beta) = - \frac{1}{n} \sum_{i=1}^n \l\{
\frac{(\Delta_i^n X + \beta h_n  X_{t_{i-1}})^2}{4\alpha \beta h_n} + \log(4\pi \alpha \beta h_n)
\r\}, 
\]
gives an asymptotically efficient estimator for $\alpha$ and $\beta$. 
If $\alpha$ is known, we can use the following asymptotically equivalent contrast for $\beta$: 
\[
l_n(\alpha, \beta) = - \frac{1}{n} \sum_{i=1}^n \l\{
\frac{(\Delta_i^n X )^2}{4\alpha \beta h_n} + \log(4\pi \alpha \beta h_n) 
\r\}. 
\]
Now, we suppose that $\a$ is known. Then an asymptotic efficient estimator of $\b$ given by the maximizer of the last contrast function: 
if $\a$ is known, then 
\begin{align}
\widetilde{\b}_n &= \frac{1}{4 \a nh_n} \sum_{i=1}^n (\Delta_i^n X)^2,\quad n\to \infty. 
\label{est-sde}
\end{align}
and it holds that
\[
\sqrt{n} (\widetilde{\b}_n - \b) \toD \mathcal{N}(0, 2\b^{2}), 
\]
which is an benchmark of the estimator.

\subsubsection{Mollified local-Gauss inference}

We shall reconsider this estimation from the view point of Gaussian processes. 
First, we can use Lemma \ref{lem:ergod} with $f(z)=z^2$ to estimate $\a$: 
\begin{align*}
\wh{\a}_n = \frac{1}{n}\sum_{i=1}^n (X_{t_{i-1}})^2\toP \a, 
\end{align*}
which is also asymptotically normal by Theorem~\ref{thm:M-asy-norm}: 
\[
\sqrt{n}(\wh{\a}_n- \a) \toD {\cal N}\l(0,2\a^2\r),\quad n\to \infty. 
\]

Because the O-U kernel is not smooth as in A\ref{as:3}, we will use a mollifier $\varphi_\e$ with $\int \varphi_\e(s)\,\mathrm{d}s = 1$, and approximate $K$ by the smoothed kernel as in Example \ref{ex:ou-kernel}: 
\[
K_\sigma^{(\e)}(t) := \int_{\mathbb{R}} K_\sigma(t - s) \varphi_\e(s) \, \mathrm{d}s.
\]
That is, instead of modeling the data by an exact Ornstein-Uhlenbeck process, one can model it by a Gaussian process with kernel $K_\sigma^{(\varepsilon)}$ using a small $\varepsilon > 0$. 

The contrast function is
\[
\ell_n^{(\e)}(\alpha, \beta) = \frac{S_n}{2 V^{(\e)}(\alpha, \beta)} + \log\l(2 h_n^{-2} V^{(\e)}(\alpha, \beta)\r).
\]
where $S_n := \frac{1}{n} \sum_{i=1}^n (\Delta_i^n X)^2$ and $V^{(\e)}(\sigma) := K_\sigma^{(\e)}(0) - K_\sigma^{(\e)}(h_n)$. 
The score function is given by 
\begin{align*}
\frac{\partial \ell_n^{(\e)}}{\partial \alpha} &= \l( 1 -\frac{S_n}{2 V^{(\e)}(\a,\b)} \r)\cdot \frac{\partial}{\partial \a}\log V^{(\e)}(\a,\b), \\
\frac{\partial \ell_n^{(\e)}}{\partial \beta} &=\l( 1 - \frac{S_n}{2 V^{(\e)}(\a,\b)} \r) \cdot \frac{\partial}{\partial \beta}\log V^{(\e)}(\a,\b).
\end{align*}
Therefore an M-estimator is given by solving the equation 
\[
V^{(\e)}(\a,\b) = \frac{1}{2}S_n.
\]

To obtain an explicit estimator we shall use the {\it Laplace mollifier} $\varphi_\e(s) = \frac{1}{2\e} e^{-\frac{|s|}{\e}}$. 
Then, thanks to Example \ref{ex:ou-kernel}, 
\begin{align*}
K_\s^{(\e)}(t) &=\frac{\a}{1 + \b\e} + \l(\frac{\a\b^2}{1+\b\e} - \frac{\a^2\b}{\e}\r) t^2 + o(t^3), \quad (t\to 0) 
\end{align*}
and, as $\ve^{-1} > \b$, 
\begin{align*}
V^{(\e)}(\a,\b) &= \l( \frac{\alpha^2 \beta}{\e} - \frac{\alpha \beta^2}{1 + \beta \e} \r) h_n^2 + o(h_n^3) = \a\b \frac{h_n^2}{\e} + o\l(\frac{h_n^3}{\e}\r). 
\end{align*}
For example, if $\b$ is known then $\a$ is identifiable (satisfying B\ref{as:id}), and we have 
\begin{align*}
\widehat{\b}_n^{(\e)} &= \dfrac{1}{2} S_n\l[ \wh{\a}_n \frac{h^2}{\e} + o\l(\frac{h^3}{\e}\r) \r]^{-1} = \frac{\e}{2\wh{\a}_n nh_n^2} \sum_{i=1}^n (\D_i^n X)^2. 
\end{align*}
Hence, taking 
\begin{align*}
\e = h_n/2\ (\to 0), 
\end{align*}
our estimator `formally' recovers the asymptotically efficient variance appearing in \eqref{est-sde}. 
Moreover, Theorem \ref{thm:asy-norm} leads that: for any fixed $\e>0$, 
\begin{align*}
\sqrt{n}(\widehat{\b}_n^{(\e)} - \b) \toD \mathcal{N}\l(0, 2\b^{2} + r_\e\r),\quad n\to \infty,  
\end{align*}
and $r_\e = O(\e)$ as $\e\to 0$. Therefore, our mollified estimator $\widehat{\b}_n^{(\e)}$ is matches the efficient benchmark up to an error of order $O(\varepsilon)$. 

\subsection{The Rational Quadratic kernel}
Consider an example that $Z$ has the Rational Quadratic (RQ) kernel given in Example \ref{ex:kernels}:
\[
K_\s(t)  = \a \l(1 + \frac{\b^2 }{2\g}t^2 \r)^{-\g},\quad \s=(\a,\b,\g)\in \R_+^3. 
\]
This kernel arises as a scale mixture of squared exponential kernels and is widely used in Gaussian process modeling for its flexibility.

The spectral density associated with this kernel has a closed-form expression:
\[
f(\omega) = \a \cdot \frac{\sqrt{2\pi} \, \Gamma(\gamma + \frac{1}{2})}{\Gamma(\gamma)} \cdot \frac{1}{(\b^2 \gamma)^{1/2}} \l(1 + \frac{2\pi^2 \omega^2}{\b^2 \gamma} \r)^{-(\gamma + \frac{1}{2})},
\]
where $\Gamma(\cdot)$ denotes the gamma function. 
While analytically available, this density is nonlinear in all parameters and requires nontrivial numerical treatment for Whittle-type likelihood inference.

In contrast, our method only relies on the second-order behavior of the kernel at the origin. Specifically, a simple Taylor expansion yields:
\begin{align*}
K_\sigma(t) = \a \l\{1 - \frac{\b^2}{2}t^2  + \frac{(1+\gamma) \b^4 }{8 \gamma} t^4 + o(t^4)\r\}, \quad t \to 0. 
\end{align*}
so that
\[
-\p^2_t K_\s(0) =\a \b^2 =:\d.
\]
This quantity enters directly into the contrast function and can be computed in closed form regardless of $\alpha$, allowing for explicit and numerically stable estimation. Therefore, even for kernels with analytically known but numerically complex spectral densities, our contrast-based method offers practical advantages in terms of implementation and stability.

These estimators can be constructed explicitly as follows: To estimate $\xi$ and $\d:=\a\b^2$, the contrast function is given by
\begin{align*}
\ell_n(\xi,\d) = \frac{1}{n}\sum_{i=1}^n \frac{\l(\D_i^n X - \mu_\xi(t_{i-1})h_n\r)^2}{2\d h_n^2}  + \log (2 \d), 
\end{align*}
and, by minimizing this contrast function, we obtain
\[
\wh{\xi}_n = \arg\min_{\xi\in \ol{\Xi}} \sum_{i=1}^n \l(\D_i^n X - \mu_\xi(t_{i-1})h_n\r)^2;\quad 
\wh{\d}_n = \frac{1}{2nh_n^2} \sum_{i=1}^n \l(\D_i^n X - \mu_{\wh{\xi}_n}(t_{i-1})h_n\r)^2. 
\]
Hence we also obtain that, for $Y_i^n := X_{t_i} - \int_0^{t_i} \mu_{\wh{\xi}_n}(s)\,\df s$, 
\[
\wh{\a}_n:= \frac{1}{n}\sum_{i=1}^n (Y_{i-1}^n)^2;\quad \wh{\b}_n = \sqrt{ \frac{\wh{\d}_n}{\wh{\a}_n} }. 
\]
To estimate $\g$, we need the information about $\p_t^4 K_\s$, and we may use \eqref{est:K4} in Remark \ref{rem:K4}: 
\[
\widehat{\p_t^4 K(0)} := \frac{1}{h_n^2} \left\{3 \wh{\d}_n^2 - \frac{1}{nh_n^4 \wh{\d}_n} \sum_{i=1}^n \left( \D_i^n X - \mu_{\wh{\xi}_n}(t_{i-1}) h_n \right)^4 \right\}. 
\]
Noticing that 
\[
\p_t^4 K_\sigma(0) = \frac{3 \a \b^4 (1 + \g)}{\g},
\]
we have the following consistent estimator 
\[
\wh{\g}_n = \frac{3 \wh{\a}_n \wh{\b}_n^4}{\wh{\p_t^4 K(0)} - 3 \wh{\a}_n \wh{\b}_n^4}.
\]

This example illustrates that higher-order local information of the covariance kernel can be systematically incorporated
through higher-order increment moments. 
While the local-Gauss contrast itself only captures the second-order curvature parameter $-\partial_t^2 K(0)$,
additional kernel parameters can be recovered through higher-order derivatives such as $\partial_t^4 K(0)$.
This demonstrates the flexibility of the proposed framework for multi-parameter Gaussian process models.

\subsection{Numerical experiments}\label{sec:num}

The purpose of the following experiments is to illustrate three different aspects
of the proposed inference procedure: (i) asymptotic normality under favorable sampling schemes,
(ii) propagation of estimation errors in joint estimation, and (iii) deterioration of finite-sample normality
when the effective observation horizon $T_n := nh_n$ grows slowly. 
 
\subsubsection{Drifted Gaussian processes with Gaussian Kernels}
Let us consider Example \ref{sec:drift+gauss}:
\[
\mu_\xi(s) = \xi e^{-s},\quad K_\s(t) = \a \exp\l(-\frac{\b}{2} t^2\r),
\]
with the true values of the parameter 
\[
(\xi_0,\a_0,\b_0) = (2.0, 1.0, 1.0). 
\]
We compute the estimators given in \eqref{M-est-Gauss1} and \eqref{M-est-Gauss2}: 
\[
\wh{\xi}_n = \frac{\sum_{i=1}^n e^{-{t_{i-1}}} \D_i^nX }{h_n \sum_{i=1}^n e^{-2{t_{i-1}}}}, \quad 
\wh{\a}_n = \frac{1}{n}\sum_{i=1}^n\l(X_{t_{i-1}} -  \wh{\xi}_n(1 - e^{-{t_{i-1}}})\r)^2,\quad 
\wh{\b}_n =  \frac{\wh{\g}_n}{\wh{\a}_n}, 
\]
where
\[
\wh{\g}_n = \frac{1}{nh_n^2}\sum_{i=1}^n \l(\D_i^n X - \wh{\xi}_n e^{-{t_{i-1}}} h_n \r)^2. 
\]
We shall try the following two cases: 
\bi
\item[(I)] $h_n = n^{-0.4}$, where $T_n :=nh_n = n^{0.6}\to \infty$, and  in estimating $\widehat{\alpha}_n$ or $\widehat{\beta}_n$, the other parameter and $\xi$ were assumed to be known and set to their true values. 

\item[(II)] The same setting as in  Case (I), and all the parameters are estimated jointly (we will use $\wh{\xi}_n$ in estimating $\wh{\a}_n$ and $\wh{\b}_n$). 

\item[(III)] $h_n = n^{-0.8}$, where $T_n:= n^{0.2}\to \infty$, the terminal is smaller than that of (I) and (II). Morever,  in estimating $\widehat{\alpha}_n$ or $\widehat{\beta}_n$, the other parameter and $\xi$ were assumed to be known and set to their true values. 
\ei
For each $n=100,\ 1000,\ 3000$, the experiments are itterated $500$ times, and we shall show the mean and standard deviation (s.d.) for each $\wh{\a}_n,\ \wh{\b}_n$ and $\wh{\xi}_n$ in Tables \ref{tab:gauss1} and  \ref{tab:gauss2}, and normal QQ-plots for each estimators in Figures \ref{fig:QQplot_Gauss1} and \ref{fig:QQplot_Gauss2}, respectively. 

We would like to compare (I) vs. (II), and (I) vs. (III). 

\begin{table}[htbp]
\centering
{\bf The result of Case (I)} \\
\begin{tabular}{cccc}
$n$ & $\wh{\xi}_n$ & $\wh{\a}_n$ & $\wh{\b}_n$ \\
\hline
500 & 1.9057 & 1.0294 & 1.0210 \\
    & (1.1316) & (0.3167) & (0.2569) \\
1000 & 1.9059 & 1.0093 & 1.0020 \\
    & (1.1489) & (0.2405) & (0.2029) \\
3000 & 1.9279 & 0.9974 & 1.0106 \\
    & (1.2021) & (0.1620) & (0.1459) \\
\hline
 True & 2.0 & 1.0 & 1.0\\
\hline
\end{tabular}
\caption{Case (I): Means and standard deviations (in parentheses) of the estimators $\widehat{\xi}_n$, $\widehat{\alpha}_n$, and $\widehat{\beta}_n$ over 500 replications, with $h_n = n^{-0.4}$ and other parameters fixed at their true values. The results illustrate good finite-sample accuracy and agreement with the asymptotic normality predicted by theory.}
\label{tab:gauss1}
\end{table}

\begin{figure}[htbp]
  \centering
  \includegraphics[width=1.0\linewidth]{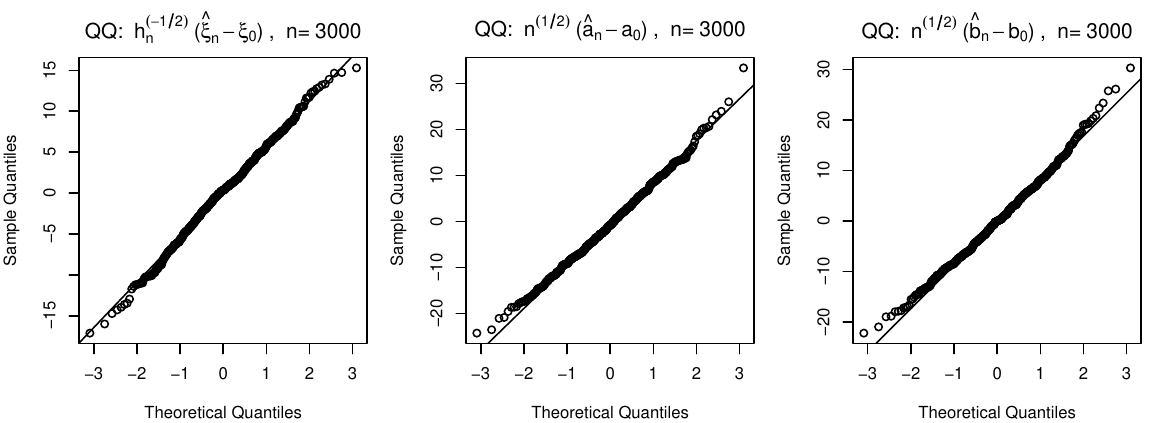}
  \caption{Normal QQ plots for Case (I): Scaled estimators $h_n^{-1/2}(\widehat{\xi}_n - \xi_0)$ and $n^{1/2}(\widehat{\alpha}_n - \alpha_0)$, $n^{1/2}(\widehat{\beta}_n - \beta_0)$ over 500 replications. The plots show good agreement with the theoretical normal distribution.}
  \label{fig:QQplot_Gauss1}
\end{figure}

\begin{table}[htbp]
\centering
{\bf The result of Case (II)} \\
\begin{tabular}{cccc}
$n$ & $\wh{\xi}_n$ & $\wh{\a}_n$ & $\wh{\b}_n$ \\
\hline
500 & 1.8199 & 2.2634 & 0.6274 \\
    & (1.1414) & (1.6769) & (0.3382) \\
1000 & 1.9404 & 2.1516 & 0.6302 \\
    & (1.1125) & (1.5830) & (0.3013) \\
3000 & 1.9476 & 2.1510 & 0.6396 \\
    & (1.0970) & (1.6094) & (0.3028) \\
\hline
 True & 2.0 & 1.0 & 1.0\\
\hline
\end{tabular}
\caption{Case (II): Means and standard deviations (in parentheses) of the estimators $\widehat{\xi}_n$, $\widehat{\alpha}_n$, and $\widehat{\beta}_n$ over 500 replications, with $h_n = n^{-0.4}$ and all parameters estimated jointly. The results show noticeable upward bias in $\widehat{\alpha}_n$ and downward bias in $\widehat{\beta}_n$ due to error propagation from $\widehat{\xi}_n$. }
\label{tab:gauss2}
\end{table}

\begin{figure}[htbp]
  \centering
  \includegraphics[width=1.0\linewidth]{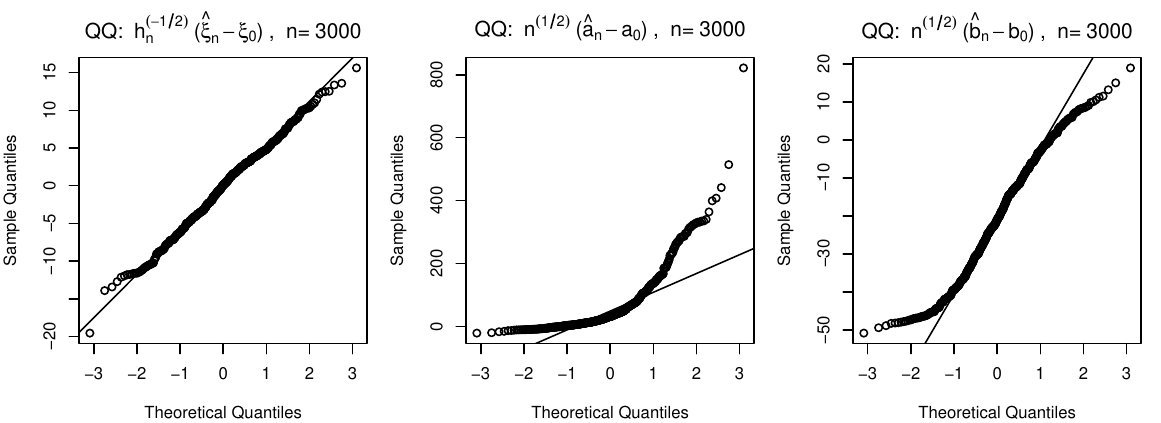}
  \caption{Normal QQ plots for Case (II): Scaled estimators from joint estimation with $h_n = n^{-0.4}$. Upward bias in $\widehat{\alpha}_n$ and downward bias in $\widehat{\beta}_n$ are accompanied by departures from normality, especially for $\widehat{\alpha}_n$ although $\wh{\xi}_n$ still seems to be asymptotically normal.}
  \label{fig:QQplot_Gauss2}
\end{figure}

\begin{table}[htbp]
\centering
{\bf The result of Case (III)} \\
\begin{tabular}{cccc}
$n$ & $\wh{\xi}_n$ & $\wh{\a}_n$ & $\wh{\b}_n$ \\
\hline
500 & 1.9828 & 1.0236 & 1.0645 \\
    & (1.1970) & (0.8996) & (0.8531) \\
1000 & 2.0446 & 0.9935 & 1.0318 \\
    & (1.1977) & (0.8225) & (0.7393) \\
3000 & 2.0449 & 1.0202 & 1.0205 \\
    & (1.1730) & (0.7799) & (0.6870) \\
    \hline
 True & 2.0 & 1.0 & 1.0\\
\hline
\end{tabular}
\caption{Case (III):  Means and standard deviations (in parentheses) of the estimators $\widehat{\alpha}_n$ and $\widehat{\beta}_n$ over 500 replications, with $\xi$ fixed at its true value, $h_n = n^{-0.8}$, and other parameters known. Consistency is improved compared to Case (II), but the slow growth of $T_n$ leads to noticeable deviations from normality in finite samples; see Figure \ref{fig:QQplot_Gauss3}, below.}
\label{tab:gauss3}
\end{table}

\begin{figure}[htbp]
  \centering
  \includegraphics[width=1.0\linewidth]{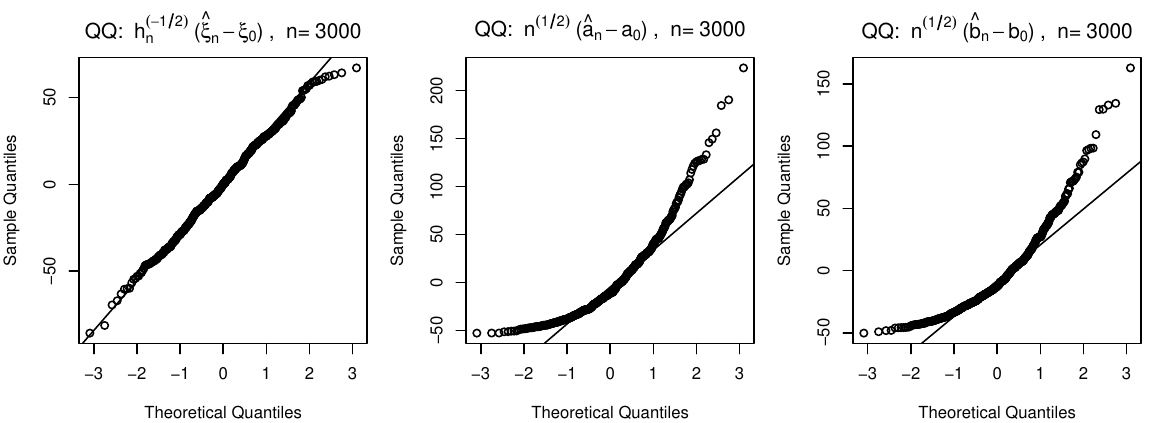}
  \caption{Normal QQ plots for Case (III): Scaled estimators with $\xi$ fixed at its true value and $h_n = n^{-0.8}$. While bias is minimal, the slow growth of $T_n:=nh_n=n^{0.2}$ results in clear deviations from normality compared to Case (I), where $T_n=n^{0.6}$, reflecting insufficient mixing in finite samples.}
  \label{fig:QQplot_Gauss3}
\end{figure}

\subsubsection{Discussion of numerical experiments}

The numerical experiments illustrate several characteristic features of the proposed inference procedure.
In Case (I), where nuisance parameters were fixed at their true values, the estimators exhibited good finite-sample accuracy and agreement with the asymptotic normality predicted by theory.
In particular, the QQ plots indicate that the scaled estimators are already close to Gaussian distributions even for moderate sample sizes.

In contrast, Case (II) showed substantial finite-sample instability due to propagation of estimation errors from $\widehat{\xi}_n$ to $\widehat{\alpha}_n$ and $\widehat{\beta}_n$.
In particular, the estimation error of $\widehat{\xi}_n$ enters the residual-based estimator $\widehat{\alpha}_n$ through squared terms, producing upward bias in $\widehat{\alpha}_n$ and corresponding downward bias in $\widehat{\beta}_n$.
Although this effect decreases asymptotically, it remains non-negligible in finite samples.

Case (III) demonstrated a different phenomenon.
Although the bias was substantially reduced by fixing $\xi$ at its true value, the slow growth of the observation horizon $T_n := nh_n$ led to noticeable deviations from normality in finite samples.
Indeed, under the choice $h_n = n^{-0.8}$, we have $T_n = n^{0.2}$, so the effective ergodic averaging remains weak even for large $n$.
This behavior is fundamentally different from standard ergodic diffusion inference, where the effective information typically grows proportionally to $T_n$.

These results indicate that, under DRI-type drift structures, the finite-sample performance of the estimators depends not only on the formal convergence rates in terms of $h_n$ and $n$, but also on the growth rate of $T_n$.
In particular, weak long-horizon averaging may substantially slow finite-sample concentration despite consistency and asymptotic normality.

\section{Concluding remarks}\label{sec:remark}

In this paper, we proposed a contrast-based inference  for Gaussian processes with time-inhomogeneous drifts observed under high-frequency sampling.
The main idea is to construct a local-Gauss contrast using only adjacent increments and scalar variance structures, thereby avoiding inversion of large covariance matrices.
This leads to estimators that are computationally simple, and theoretically tractable under general weak-dependence and ergodicity conditions.

A central feature of the proposed framework is the separation between local and global information of the covariance structure.
The local-Gauss contrast primarily extracts local curvature information through quantities such as $-\p_t^2 K(0)$.
When this local information alone is insufficient for parameter identification, additional kernel parameters can be recovered through auxiliary moment estimators based on marginal distributions or higher-order increment moments.
In particular, higher-order derivatives such as $\p_t^4 K(0)$ can be estimated consistently from fourth-order residual statistics, allowing inference for multi-parameter kernels including rational quadratic kernels.

The examples and numerical experiments illustrate several characteristic aspects of the methodology.
For smooth kernels such as the Gaussian kernel, the local-Gauss contrast yields explicit estimators with asymptotic normality and favorable finite-sample behavior.

A further characteristic feature of the proposed framework is the appearance of nonstandard convergence rates for the drift parameter under DRI-type drift structures. 
In particular, the estimator of the drift parameter converges at the rate $h_n^{-1/2}$,
which differs fundamentally from the standard $\sqrt{nh_n}$-type behavior typically observed in ergodic diffusion inference.

This phenomenon originates from the fact that the proposed local-Gauss contrast is driven primarily by local increment information. 
Under DRI-type assumptions, the effective information accumulation for the drift component remains relatively weak even when the observation horizon $T_n := nh_n$ diverges.
Consequently, although consistency and asymptotic normality hold theoretically,
finite-sample concentration may remain weak,
making the convergence behavior difficult to observe numerically for moderately large sample sizes.

Another important direction is to relax the DRI-type assumptions on the drift functions.
In the present paper, the DRI condition provides a convenient framework in which the local-Gauss contrast yields consistency and asymptotic normality.
However, it also implies weak information accumulation for the drift component, leading to the nonstandard rate $h_n^{-1/2}$ and slow finite-sample concentration.
For drift functions with non-integrable or polynomial growth structures, such as $\mu_\xi(t)=t^\xi$, the information may accumulate more strongly as $T_n \to \infty$.
In such cases, faster convergence rates than those obtained under the DRI framework may be expected.
A systematic theory for this broader class of time-inhomogeneous drifts will be studied elsewhere.

The simulations also reveal that, under DRI-type drift structures, finite-sample concentration may remain weak even for moderately large sample sizes.
This phenomenon is fundamentally different from standard ergodic diffusion inference, since the information accumulation for the drift parameter is not governed solely by the observation horizon $T_n := nh_n$.
The numerical experiments further demonstrate that joint estimation may induce substantial finite-sample bias propagation from the drift estimator to covariance parameter estimators.

Another important aspect of the present framework is its flexibility with respect to kernel regularity.
Even when the covariance kernel is not smooth at the origin, as in the Ornstein-Uhlenbeck process, mollified approximations allow the local-Gauss methodology to remain applicable.
Moreover, the resulting estimators asymptotically recover the efficient benchmark obtained from classical diffusion-based high-frequency inference.

Compared with frequency-domain approaches such as Whittle-type likelihood methods, the proposed framework works directly in the time domain and naturally accommodates time-inhomogeneous drift structures.
In particular, the method does not require explicit spectral representations, frequency-domain optimization, or preliminary de-trending procedures.
This makes the framework particularly attractive for applications involving complex deterministic trends or covariance kernels without tractable spectral forms.

Several directions remain for future research.
One important problem is to establish a general asymptotic theory for shrinking mollification schemes in nonsmooth kernel models.
Another natural extension is to investigate hybrid procedures combining local-Gauss contrasts with frequency-domain methods after drift removal.
It would also be interesting to study bias-reduction techniques and orthogonalized estimating functions that stabilize joint estimation under DRI-type drift structures.
Finally, extending the framework to broader classes of weakly nonstationary or long-memory Gaussian processes remains an important open problem.

\begin{remark}
The direct Riemann integrability assumption imposed on the drift component is mainly used to guarantee the existence of a deterministic quadratic separation limit of the form
\[
\frac1n
\sum_{i=1}^n
\{\mu_\xi(t_{i-1})-\mu_{\xi_0}(t_{i-1})\}^2
\to
M(\xi,\xi_0).
\]

However, the consistency argument itself does not fundamentally rely on direct Riemann integrability.
More generally, it is sufficient to assume the existence of a positive sequence $a_n \to \infty$ such that
\[
\frac1{a_n}
\sum_{i=1}^n
\{\mu_\xi(t_{i-1})-\mu_{\xi_0}(t_{i-1})\}^2
\to
M(\xi,\xi_0),
\]
uniformly in $\xi$, where
\[
M(\xi,\xi_0)=0
\quad\Longleftrightarrow\quad
\xi=\xi_0.
\]

Under suitable differentiability conditions, the corresponding estimator is expected to satisfy
\[
\widehat{\xi}_n-\xi_0
=
O_p\left\{
\left(
\frac{a_n}{T_n}
\right)^{-1/2}
\right\},
\]
where $T_n=n h_n$.
In the present DRI setting, one typically has $a_n=n$, which yields the rate
\[
\widehat{\xi}_n-\xi_0
=
O_p(h_n^{1/2}).
\]

This generalized framework potentially covers non-DRI drift structures, including polynomially growing drifts, logarithmic trends, almost periodic components, and more general nonstationary deterministic signals.
A systematic investigation of such growth-adapted asymptotic theory will be studied elsewhere.
\end{remark}

\section{Proofs of main theorems}\label{sec:proof}

\subsection{Proof of Theorem \ref{thm:consist}}
First, we shall show that the contrast function \eqref{lg-contrast} converges to a deterministic limit uniformly in $\th\in \ol{\Theta}$. 

By Lemma \ref{lem:f-DX}, we obtain
\begin{align*}
\frac{1}{n} \sum_{i=1}^n  \frac{(\Delta_i^n X - h_n \mu_\xi(t_{i-1}))^2}{2 h_n^2}  \toP - \p_t^2 K(0),
\end{align*}
uniformly in $\xi$. On the other hand, the condition B\ref{as:K2} implies
\begin{align*}
K_\sigma(0) - K_\sigma(h_n) = -\frac{1}{2} \p_t^2 K_\sigma(0) h_n^2 + o(h_n^3),
\end{align*}
uniformly in $\s$. Therefore,
\begin{align*}
\frac{1}{2 [K_\sigma(0) - K_\sigma(h_n)]} = \frac{1}{h_n^2 \p_t^2 K_\sigma(0)} + o(h_n^{-1}), 
\end{align*}
and
\begin{align*}
\log(h_n^{-2}[K_\sigma(0) - K_\sigma(h_n)]) = \log\l( -\frac{1}{2} \p_t^2 K_\sigma(0) \r) + o(h_n),
\end{align*}
uniformly in $\s$. Combining all, we obtain
\begin{align*}
\ell_n(\xi,\s) = \frac{1}{n} \sum_{i=1}^n \l\{ \frac{(\Delta_i^n X - h_n \mu_\xi(t_{i-1}))^2}{-2 h_n^2 \p_t^2 K_\sigma(0)} + \log[-\p_t^2 K_\sigma(0)] \r\} + o_p(1),
\end{align*}
uniformly in $\th$. Since the leading term converges in probability to
\begin{align*}
\ell(\s) := \frac{\p_t^2 K_{\s_0}(0)}{\p_t^2 K_\sigma(0)} + \log[-\p_t^2 K_\sigma(0)]. 
\end{align*}
Hence it follows that 
\[
\sup_{\th \in \ol{\Theta}}\l|\ell_n(\xi,\s) - \ell(\s)\r| \toP 0,\quad n\to \infty. 
\]
Second, note that $\ell(\s)$ is minimized if and only if $\partial_t^2 K_{\sigma_0}(0) = \partial_t^2 K_\sigma(0)$, which implies that $\s=\s_0$ by B\ref{as:id}, that is, it follows that 
\[
\inf_{|\s - \s_0|>\e} |\ell(\s)| > \ell(\s_0). 
\]
Hence, by Theorem 5.7 by van der Vaart \cite{v98}, the following consistency holds true: 
\[
\wh{\s}_n \toP \s_0,\quad n\to \infty. 
\]
Next, note that 
\[
L_n(\xi) = n h_n \l\{ \ell_n(\xi, \wh{\s}_n) - \ell_n(\xi_0, \wh{\s}_n) \r\}. 
\]
Then we write:
\begin{align*}
L_n(\xi) &= h_n\sum_{i=1}^n \l\{ \frac{ h_n^2 \l\{ \mu_{\xi_0}^2(t_{i-1}) - \mu_\xi^2(t_{i-1}) \r\} }{2 [K_{\wh{\s}_n}(0) - K_{\wh{\s}_n}(h_n)]} + \frac{ 2 h_n (\mu_\xi(t_{i-1}) - \mu_{\xi_0}(t_{i-1})) \D_i^n X }{2 [K_{\wh{\s}_n}(0) - K_{\wh{\s}_n}(h_n)]} \r\}.
\end{align*}
By B\ref{as:K2} and $\wh{\s}_n \toP \s_0$,
\begin{align*}
[K_{\wh{\s}_n}(0) - K_{\wh{\s}_n}(h_n)] = -\frac{1}{2} \p_t^2 K(0) h_n^2 + o_p(h_n^3),
\end{align*}
so the reciprocal is
\begin{align*}
\frac{1}{[K_{\wh{\s}_n}(0) - K_{\wh{\s}_n}(h_n)]} = -\frac{2}{\p_t^2 K(0) h_n^2} + o_p(h_n^{-1}).
\end{align*}
Substituting, we obtain
\begin{align*}
L_n(\xi) &= -\frac{1}{\p_t^2 K(0)} \sum_{i=1}^n \l\{ \mu_{\xi_0}^2(t_{i-1}) - \mu_\xi^2(t_{i-1}) \r\}h_n  \\
&\quad - \frac{2}{\p_t^2 K(0)} \sum_{i=1}^n[\mu_\xi(t_{i-1}) - \mu_{\xi_0}(t_{i-1})]\D_i^n X + o_p(h_n).
\end{align*}
Now decompose $\D_i^n X = \D_i^n Z + h_n \mu_{\xi_0}(t_{i-1}) + r_i^n$, where $r_i^n := \int_{t_{i-1}}^{t_i} (\mu_{\xi_0}(s) - \mu_{\xi_0}(t_{i-1})) \df s = o(h_n)$ by continuity.
Then
\begin{align*}
[\mu_\xi(t_{i-1}) - \mu_{\xi_0}(t_{i-1})] \D_i^n X = [\mu_\xi(t_{i-1}) - \mu_{\xi_0}(t_{i-1})] \D_i^n Z + h_n [\mu_\xi(t_{i-1}) - \mu_{\xi_0}(t_{i-1})] \mu_{\xi_0} + o(h_n).
\end{align*}
It follows from Lemma \ref{lem:m-DZ} with $a_n=h_n^{-1}$ that 
\begin{align*}
\sum_{i=1}^n [\mu_\xi(t_{i-1}) - \mu_{\xi_0}(t_{i-1})]\D_i^n Z \toP 0. 
\end{align*}
Moreover,  by the direct Riemann integrability B\ref{as:DRI-mu} (so $\mu_\xi$ is bounded; Remark \ref{rem:dri-cont}), we have that 
\begin{align*}
\sum_{i=1}^n [\mu_\xi(t_{i-1}) - \mu_{\xi_0}(t_{i-1})] \mu_{\xi_0}(t_{i-1}) h_n \to \int_0^\infty (\mu_\xi(s) - \mu_{\xi_0}(s)) \mu_{\xi_0}(s)\,\df s. 
\end{align*} 
Therefore,
\begin{align*}
L_n(\xi) \to \frac{1}{\p_t^2 K(0)} \int_0^\infty (\mu_\xi(t) - \mu_{\xi_0}(t))^2 \df t =: L(\xi).
\end{align*}
By B\ref{as:id2}, $L(\xi) = 0$ if and only if $\xi = \xi_0$. Hence $L(\xi) > 0$ for all $\xi \ne \xi_0$.
Finally, by Theorem 5.7 by van der Vaart \cite{v98} again, it follows that
\[
\wh{\xi}_n := \arg\min_{\xi \in \ol{\Xi}} \ell_n(\xi, \wh{\s}_n) \toP \xi_0.
\]
This completes the proof. \qed

\subsection{Proof of Theorem \ref{thm:asy-norm}}

Since $\wh{\th}_n\toP \th_0 \in \mathring{\Theta}$, we can assume, in the standard argument for asymptotic normality, 
that $\wh{\th}_n \in \mathring{\Theta}$ for $n$ large enough. without loss of generality. 

Let $D_n$ be the block-diagonal scaling matrix:
\[
D_n :=
\begin{pmatrix}
h_n^{-1/2} I_p & 0 \\
0 & \sqrt{n} I_q
\end{pmatrix}.
\]
Applying Taylor's formula around $\theta_0$, we write
\[
0 = \p_\theta \ell_n(\widehat{\theta}_n)
= \p_\theta \ell_n(\theta_0) + \int_0^1 \p_\theta^2 \ell_n(\theta_n^*(u))\,\df u \cdot (\widehat{\theta}_n - \theta_0),
\]
where $\theta_n^*(u) := u \widehat{\theta}_n + (1 - u) \theta_0$ for some $u \in (0, 1)$.
Multiplying both sides by $D_n$, we obtain
\[
0 = D_n \p_\theta \ell_n(\theta_0) + D_n\int_0^1 \p_\theta^2 \ell_n(\theta_n^*(u))\,\df u \cdot (\widehat{\theta}_n - \theta_0). 
\]
Rewriting, we get
\[
D_n (\widehat{\theta}_n - \theta_0)
= - \l\{C_nD_n \p_\theta^2 \ell_n(\theta_n^*) D_n^{-1} \r\}^{-1} C_nD_n \p_\theta \ell_n(\theta_0), 
\]
where 
\[
C_n :=
\begin{pmatrix}
nh_n I_p & 0 \\
0 &  I_q
\end{pmatrix}.
\]
From Lemma~\ref{lem:score-vector}, we have
\[
C_nD_n \p_\theta \ell_n(\theta_0) \toD \mathcal{N}(0, J(\theta_0)), \quad J(\theta_0) := 
\begin{pmatrix}
\dis  \frac{2\int_0^\infty \l\{ \partial_\xi \mu_{\xi_0}(t) \r\}^{\otimes 2} dt}{\partial_t^2 K(0)} &0 \\
0 & V_1(\s_0)
\end{pmatrix}. 
\]
It remains to show the convergence of $D_n \p_\theta^2 \ell_n(\theta_n^*) D_n^{-1}$. Note that 
\[
C_nD_n \p_\theta^2 \ell_n(\theta_n^*) D_n^{-1}
=
\begin{pmatrix}
nh_n \p_\xi^2 \ell_n(\theta_n^*) & \sqrt{\frac{1}{nh_n}} \p_\xi \p_\sigma^\top \ell_n(\theta_n^*) \\
\sqrt{nh_n} \p_\sigma \p_\xi^\top \ell_n(\theta_n^*) & \p_\sigma^2 \ell_n(\theta_n^*)
\end{pmatrix}.
\]
and we have that 
\begin{align*}
\p_\xi^2 \ell_n(\theta) &= \frac{h_n^2 }{n} \sum_{i=1}^n \l[
\frac{ \l\{ \p_\xi \mu_\xi(t_{i-1}) \r\}^{\otimes 2}}{v_n(\sigma)}
- \frac{ \l\{ \D_i^n X - \mu_\xi(t_{i-1}) h_n \r\} \cdot \p_\xi^2 \mu_\xi(t_{i-1})}{h_n v_n(\sigma)}
\r]; \\
\sqrt{\frac{1}{nh_n}} \p_\xi \p_\sigma \ell_n(\theta)&= \frac{1}{n\sqrt{h_n}} \sum_{i=1}^n
\l[
\frac{ \l\{ \D_i^n X - \mu_\xi(t_{i-1}) h_n \r\} \cdot \p_\xi \mu_\xi(t_{i-1}) h_n \cdot \p_\sigma v_n(\sigma) }
{ \l\{ v_n(\sigma)\r\}^2 }
\r]; \\
\p_\sigma^2 \ell_n(\theta) &= \frac{1}{n} \sum_{i=1}^n \Bigg[
\frac{ \l\{ \D_i^n X - \mu_\xi(t_{i-1}) h_n \r\}^2 \cdot \l[ \p_\sigma^2 \{ v_n(\sigma)\} \cdot \{ v_n(\sigma)\} - 2 \{ \p_\sigma v_n(\sigma) \}^2 \r] }
{ 2 \l\{ v_n(\sigma)\r\}^3 } \\
&+ \frac{ \p_\sigma^2 \{ v_n(\sigma)\} }{ v_n(\sigma)} 
- \frac{ \l\{ \p_\sigma \{ v_n(\sigma)\} \r\}^{\otimes 2} }{ \l\{ v_n(\sigma)\r\}^2 }
\Bigg], 
\end{align*}
where $v_n(\sigma) = K_\sigma(0) - K_\sigma(h_n)$. 

As for $\p_\xi^2 \ell_n(\theta)$, since $\frac{h_n^2}{v_n(\sigma)} = \frac{2}{-\p_t^2 K_\sigma(0)} + o(h_n)$ uniformly in $\s$ by B\ref{as:K2}, 
it follows that 
\[
nh_n\p_\xi^2 \ell_n(\theta) 
= \l( \frac{2}{-\p_t^2 K_\sigma(0)} + o(h_n) \r) \cdot h_n\sum_{i=1}^n \l[\l\{ \p_\xi \mu_\xi(t_{i-1}) \r\}^{\otimes 2} - \widetilde{Y}_i^n \cdot \p_\xi^2 \mu_\xi(t_{i-1})\r],
\]
where $\widetilde{Y}_i^n := \frac{ \Delta_i^n X - h_n \mu_\xi(t_{i-1}) }{h_n}$.
Now, by the assumption B\ref{as:DRI-mu2}, the first term of the summation satisfies
\[
h_n\sum_{i=1}^n \l\{ \p_\xi \mu_\xi(t_{i-1}) \r\}^{\otimes 2}  =  \sum_{i=1}^n \l\{ \p_\xi \mu_\xi(t_{i-1}) \r\}^{\otimes 2}h_n   
\to \int_0^\infty \{ \p_\xi \mu_\xi(s) \}^{\otimes 2}\df s, 
\]
as $n\to \infty$ since $(\p_\xi \mu_\xi)^2$ is DRI. Moreover, it follows for the second term that 
\begin{align*}
h_n\sum_{i=1}^n \widetilde{Y}_i^n \cdot \p_\xi^2 \mu_\xi(t_{i-1}) 
&=\sum_{i=1}^n\l[\D_i^n Z + \int_{t_{i-1}}^{t_i}\mu_{\xi_0}(s)\,\df s - h_n \mu_\xi(t_{i-1})  \r]  \cdot \p_\xi^2 \mu_\xi(t_{i-1}) \\
&= \sum_{i=1}^n \p_\xi^2 \mu_\xi(t_{i-1})  \cdot \D_i^nZ 
+ h_n \sum_{i=1}^n \frac{1}{h_n}\int_{t_{i-1}}^{t_i}\mu_{\xi_0}(s)\,\df s  \cdot \p_\xi^2 \mu_\xi(t_{i-1}) \\
&\quad -  \sum_{i=1}^n \mu_\xi(t_{i-1})  \cdot \p_\xi^2 \mu_\xi(t_{i-1}) h_n \\
&\to  \int_0^\infty \mu_{\xi_0}(s) \p_\xi^2 \mu_\xi(s) \df s - \int_0^\infty \mu_\xi(s) \p_\xi^2 \mu_\xi(s) \df s, 
\end{align*}
uniformly in $\xi \in \Xi$ by Lemma \ref{lem:m-DZ} with $a_n=h_n^{-1}$ and the mean value theorem under B\ref{as:DRI-mu2} and B\ref{as:DRI-mu4}. Therefore,
\[
nh_n\p_\xi^2 \ell_n(\theta) \to \frac{2}{\p_t^2 K_\sigma(0)} \int_0^\infty \l[- \mu_{\xi_0}(s) \p_\xi^2 \mu_\xi(s)  + \mu_\xi(s) \p_\xi^2 \mu_\xi(s) -  \{ \p_\xi \mu_\xi(s) \}^{\otimes 2}\r] \df s, 
\]
uniformly in $\theta \in \overline{\Theta}$. Hence we have that 
\[
nh_n \p_\xi^2 \ell_n(\wh{\theta}_n) \to - \frac{2}{\p_t^2 K_{\sigma_0}(0)}  \int_0^\infty \{ \p_\xi \mu_\xi(s) \}^{\otimes 2}\,\df s.
\]
As for $\p_\sigma^2 \ell_n(\theta)$, it follow by the same argument as above that 
\begin{align*}
\partial_\sigma^2 \ell_n(\theta)
&= \frac{1}{n} \sum_{i=1}^n \l[
\frac{ \widetilde{Y}_i^n{}^2 \cdot \l( \partial_\sigma^2 \partial_t^2 K_\sigma(0) \cdot \partial_t^2 K_\sigma(0) 
- \l( \partial_\sigma \partial_t^2 K_\sigma(0) \r)^{\otimes 2} \r) }
{ \l( \partial_t^2 K_\sigma(0) \r)^3 } + o(1)
\r] \\
&= \l( \frac{ \partial_\sigma^2 \partial_t^2 K_\sigma(0) \cdot \partial_t^2 K_\sigma(0)
- \l( \partial_\sigma \partial_t^2 K_\sigma(0) \r)^{\otimes 2} }{ \l( \partial_t^2 K_\sigma(0) \r)^3 } \r)
\cdot \frac{1}{n} \sum_{i=1}^n \widetilde{Y}_i^n{}^2 + o_p(1).
\end{align*}

By Lemma~\ref{lem:f-DX} with $f(z, \theta) = z^2$, we have:
\[
\frac{1}{n} \sum_{i=1}^n \widetilde{Y}_i^n{}^2 \to \E[Z^2] = -\partial_t^2 K(0),
\]
and thus
\[
\partial_\sigma^2 \ell_n(\theta)
\to 
\l( \frac{ \partial_\sigma^2 \partial_t^2 K_\sigma(0) \cdot \partial_t^2 K_\sigma(0)
- \l( \partial_\sigma \partial_t^2 K_\sigma(0) \r)^{\otimes 2} }{ \l( \partial_t^2 K_\sigma(0) \r)^3 } \r)  (-\partial_t^2 K_{\sigma_0}(0)).
\]
uniformly in $\th\in \Theta$. Hence, we have that 
\begin{align*}
\partial_\sigma^2 \ell_n(\wh{\theta}_n) &\toP 
\frac{ \partial_\sigma^2 \partial_t^2 K_{\sigma_0}(0) }{ \partial_t^2 K_{\sigma_0}(0) }
- \l( \frac{ \partial_\sigma \partial_t^2 K_{\sigma_0}(0) }{ \partial_t^2 K_{\sigma_0}(0) } \r)^{\otimes 2} 
= \p_\s^2\l(\log (-\p_t^2 K_{\s_0}(0) )\r) = V_2(\s_0). 
\end{align*}
Similarly, we see that $ \frac{1}{\sqrt{nh_n}} \p_\xi \p_\sigma \ell_n(\theta)=o_p(1)$ and $\sqrt{nh_n} \p_\xi \p_\sigma \ell_n(\theta)=o_p(1)$.  Hence 
\[
D_n \p_\theta^2 \ell_n(\theta_n^*) D_n^{-1}
\toP I(\th_0):=
\begin{pmatrix}
\dis \frac{2}{\p_t^2 K_{\sigma_0}(0)}  \int_0^\infty \{ \p_\xi \mu_\xi(s) \}^{\otimes 2}\,\df s& 0 \\
0 &V_2(\s_0)
\end{pmatrix}.
\]

As a consequence, we have 
\[
D_n (\widehat{\theta}_n - \theta_0)\toD {\cal N}\l(\mb{0}, I^{-1}(\th_0)J(\th_0) I^{-1}(\th_0)\r) = \mathcal{N}(0, \Sigma(\theta_0)). 
 \]
\qed

\subsection{Proof of Theorem \ref{thm:M-consist}}

We may apply Lemma~\ref{lem:fY}, which yields the uniform convergence
\[
\sup_{\sigma \in \overline{\Pi}} \l| \Phi_n(\sigma) - \Phi(\sigma) \r| \toP 0.
\]
In addition, since $\wt{\s}_n$ satisfies $\Phi_n(\wt{\s}_n) = 0$, and since $\Phi$ satisfies the identifiability condition \eqref{Phi-identifiable}, it follows from standard $Z$-estimation theory: 
\[
\wt{\s}_n \toP \sigma_0.
\]
See, e.g., Theorem 5.9 in van der Vaart \cite{v98}. 
\qed

\subsection{Proof of Theorem \ref{thm:M-asy-norm}}
By definition, the estimator $\wt{\s}_n \in \mathbb{R}^q$ satisfies
\[
\Phi_n(\wt{\s}_n) = \mb{0}.
\]
According to (the integral form of) the mean value theorem, we obtain
\[
\Phi_n(\wt{\s}_n) - \Phi_n(\sigma_0) 
= \l( \int_0^1 \partial_\sigma \Phi_n\l( \sigma_0 + u (\wt{\s}_n - \sigma_0) \r) \mathrm{d}u \r) (\wt{\s}_n - \sigma_0).
\]
Hence,
\[
\sqrt{n} (\wt{\s}_n - \sigma_0)
= - \l( \int_0^1 \partial_\sigma \Phi_n\l( \sigma_0 + u (\wt{\s}_n - \sigma_0) \r) \mathrm{d}u \r)^{-1} 
\sqrt{n} \Phi_n(\sigma_0).
\]

Since $\wt{\s}_n \to \sigma_0$ in probability and $\partial_\sigma \Phi_n(\sigma)$ converges uniformly in probability to $\partial_\sigma \Phi(\sigma)$ on a neighborhood of $\sigma_0$, we obtain
\[
\int_0^1 \partial_\sigma \Phi_n\l( \sigma_0 + u (\wt{\s}_n - \sigma_0) \r) \mathrm{d}u 
\toP \partial_\sigma \Phi(\sigma_0) = A,
\]
and the inverse converges in probability to $A^{-1}$.

Next, we apply Theorem 2.1 of Neumann~\cite{n13} to obtain that 
\[
\sqrt{n} \Phi_n(\sigma_0) \toD \mathcal{N}(\mb{0}, \Gamma^2),\quad n\to \infty, 
\]
with  $\dis \Gamma^2 := \lim_{n \to \infty} n \, \mathrm{Var}\l( \Phi_n(\sigma_0) \r).$

Note that
\[
\Phi_n^{(j)}(\sigma_0) = \frac{1}{n} \sum_{i=1}^n \l[ f_j(Y_{i-1}^n) 
- \int_{\mathbb{R}} f_j(z) \, \phi_{K_{\sigma_0}(0)}(z) \, \mathrm{d}z \r]
=: \frac{1}{n} \sum_{i=1}^n\zeta_{i,n}^{(j)}.
\]
Since $Y_i^n = Z_{t_i} + \Delta_i^n$, where 
\[
\Delta_i^n := \int_0^{t_i} \l\{ \mu_{\xi_0}(s) - \mu_{\widehat{\xi}_n}(s) \r\} \mathrm{d}s,
\]
we define the centered version
\[
\widetilde{\zeta}_{i,n}^{(j)} := f_j(Y_{i-1}^n) - \E[f_j(Y_{i-1}^n)],
\]
so that $\E[\widetilde{\zeta}_{i,n}^{(j)} ] = 0$, and write
\[
\Phi_n^{(j)}(\sigma_0)
= \frac{1}{n} \sum_{i=1}^n \widetilde{\zeta}_{i,n}^{(j)} + R_n^{(j)},
\]
where
\[
R_n^{(j)} := \frac{1}{n} \sum_{i=1}^n \l\{ \E[f_j(Y_{i-1}^n)] - \int_{\mathbb{R}} f_j(z) \phi_{K_{\sigma_0}(0)}(z) \mathrm{d}z \r\}.
\]

By a Taylor expansion and the consistency $\widehat{\xi}_n \to \xi_0$, we have $\sup_i |\Delta_i^n| = o_P(1)$ and hence
\[
R_n^{(j)} = o_P(n^{-1/2}).
\]

To apply Theorem 2.1 of Neumann~\cite{n13} to $\{ \widetilde{\zeta}_{i,n}^{(j)} \}_{i=1}^n$, we verify the following:

\begin{itemize}
\item[(i)] \textbf{Square integrability:}
There exists a constant $v_0 > 0$ such that
\[
\sum_{i=1}^n \E\l[\, (\widetilde{\zeta}_{i,n}^{(j)} )^2\, \r] \le v_0
\quad \text{for all } n \in \mathbb{N}.
\]
This follows from the polynomial growth of $f_j$ and bounded moments of $Z_{t_i}$, together with the fact that $Y_i^n = Z_{t_i} + \Delta_i^n$ and $\Delta_i^n = o_P(1)$.

\item[(ii)] \textbf{Lindeberg-type condition:}
For all $\varepsilon > 0$,
\[
\sum_{i=1}^n \E\l[\, (\widetilde{\zeta}_{i,n}^{(j)} )^2 \cdot \mathbf{1}_{\{ |\widetilde{\zeta}_{i,n}^{(j)} | > \varepsilon \sqrt{n} \}} \, \r] \to 0 \quad (n \to \infty).
\]
This is ensured by the same growth and moment conditions as in (ii), combined with the fact that $Z_{t_i}$ is Gaussian.

\item[(iii)] \textbf{Weak dependence (covariance inequality):}
There exists a sequence $\{\theta_r\}_{r\in \mathbb{N}}$ with $\sum_{r=1}^\infty \theta_r < \infty$ such that, for any measurable function $g : \mathbb{R}^u \to \mathbb{R}$ with $|g| \le 1$,
\[
\l| \Cov\l( g(\widetilde{\zeta}_{s_1,n}^{(j)},\dots,\widetilde{\zeta}_{s_u,n}^{(j)}), \widetilde{\zeta}_{t,n}^{(j)} \r) \r|
\le ( \E[\widetilde{\zeta}_{t,n}^{(j)2}] + \E[\widetilde{\zeta}_{s_u,n}^{(j)2}] + n^{-1} ) \cdot \theta_{r},
\]
where $s_1 < \cdots < s_u < s_u + r \le t$.
\item[(iv)] \textbf{Convergence of variance:}
There exists a constant $\Gamma_{j}^2 \in (0,\infty)$ such that
\[
\Var\l( \sum_{i=1}^n \widetilde{\zeta}_{i,n}^{(j)} \r) \to \Gamma_{j}^2.
\]
\end{itemize}

\paragraph{Verification of (i) Square integrability.}
Recall that $\widetilde{\zeta}_{i,n}^{(j)} = f_j(Y_{i-1}^n) - \E[f_j(Y_{i-1}^n)]$, where $Y_i^n = Z_{t_i} + \delta_i^n$ with 
\[
\delta_i^n := \int_0^{t_i} \{ \mu_{\xi_0}(s) - \mu_{\widehat{\xi}_n}(s) \} \, \mathrm{d}s.
\]
By Jensen's inequality,
\[
\E\l[ (\widetilde{\zeta}_{i,n}^{(j)} )^2 \r] \le \E\l[ f_j(Y_{i-1}^n)^2 \r].
\]
Under assumption~(B-1), there exists $C > 0$ such that $|f_j(x)| \lesssim 1 + |x|^C$, hence
\[
f_j(Y_{i-1}^n)^2 \lesssim 1 + |Z_{t_i} + \delta_i^n|^{2C} \lesssim 1 + |Z_{t_i}|^{2C} + |\delta_i^n|^{2C}.
\]
Since $\{Z_{t_i}\}_{i=1}^n$ has uniformly bounded moments and $\sup_i |\delta_i^n| = o_P(1)$, we obtain
\[
\sup_{n} \sum_{i=1}^n \E\l[ (\widetilde{\zeta}_{i,n}^{(j)} )^2 \r] < \infty.
\]
Therefore, condition~(i) is satisfied.

\paragraph{Verification of (ii) Lindeberg-type condition.}
Let $\varepsilon > 0$ be arbitrary. By assumption~(B-1), we have $|f_j(x)| \lesssim 1 + |x|^C$ for some $C > 0$, so
\[
|\widetilde{\zeta}_{i,n}^{(j)} | \lesssim 1 + |Y_{i-1}^n|^C, \quad Y_i^n = Z_{t_i} + \delta_i^n.
\]
Since $\delta_i^n = o_P(1)$ uniformly and $Z_{t_i}$ has uniformly bounded moments of all orders, we obtain for any $r > 2$
\[
\sup_{n,i} \E\l[ |\widetilde{\zeta}_{i,n}^{(j)} |^r \r] < \infty.
\]
Then it follows from Markov's inequality that 
\[
\sum_{i=1}^n \E\l[ (\widetilde{\zeta}_{i,n}^{(j)} )^2 \cdot \mathbf{1}_{\{ |\widetilde{\zeta}_{i,n}^{(j)} | > \varepsilon \sqrt{n} \}} \r]
\lesssim n \cdot \frac{1}{n^{(r-2)/2}} \to 0 \quad (n \to \infty).
\]
Thus, the Lindeberg-type condition~(ii) is satisfied.

\paragraph{Verification of (iii) Weak dependence.}
Fix integers $1 \le s_1 < \dots < s_u < t \le n$ and define $r := t - s_u$. Let $g : \mathbb{R}^u \to \mathbb{R}$ be any measurable function with $|g| \le 1$. Set
\[
\theta_r := \frac{ \displaystyle \int_0^{h_n} \int_0^{h_n} |K(r h_n + u - v)| \,\df u \,\df v }{
\displaystyle 2 \int_0^{h_n} \int_0^{h_n} |K(u - v)| \,\df u \,\df v + h_n }.
\]

We write $Y_i^n = Z_{t_i} + \delta_i^n$ and note that $\delta_i^n = o_P(1)$ uniformly in $i$. Since $f_j$ is of polynomial growth and differentiable, we can linearize $f_j(Y_{i-1}^n)$ around $Z_{t_i}$, yielding
\[
f_j(Y_{i-1}^n) = f_j(Z_{t_{i-1}}) + R_i^n,
\]
where the remainder $R_i^n = f_j(Z_{t_{i-1}} + \delta_i^n) - f_j(Z_{t_{i-1}})$ satisfies $R_i^n = o_P(1)$ under the growth condition.

Let us define the auxiliary array
\[
X_i^n := \frac{1}{\sqrt{n}} f_j(Z_{t_i}), \quad \text{so that } \widetilde{\zeta}_{i,n}^{(j)} = X_i^n - \E[X_i^n] + o_P(n^{-1/2}).
\]
By Lemma~\ref{lem:cov-bound-summable} (adapted to this setting), for such triangular arrays we have
\[
\l| \Cov\l( g(X_{s_1}^n,\dots,X_{s_u}^n), X_t^n \r) \r| 
\le \l( \E[|X_t^n|^2] + \E[|X_{s_u}^n|^2] + \frac{1}{n} \r) \cdot \theta_r.
\]

Since the centering operation and the $o_P(n^{-1/2})$ term do not affect the covariance up to $o(n^{-1})$, it follows that
\[
\l| \Cov\l( g(\widetilde{\zeta}_{s_1,n}^{(j)}, \dots, \widetilde{\zeta}_{s_u,n}^{(j)}), \widetilde{\zeta}_{t,n}^{(j)} \r) \r| 
\le \l( \E[|\widetilde{\zeta}_{t,n}^{(j)}|^2] + \E[|\widetilde{\zeta}_{s_u,n}^{(j)}|^2] + \frac{1}{n} \r) \cdot \theta_r + o(n^{-1}).
\]

Finally, if $K \in L^1(\mathbb{R})$, then $\sum_{r=1}^\infty \theta_r < \infty$ by Lemma~\ref{lem:cov-bound-summable}, and thus the weak dependence condition (iii) is satisfied.

\paragraph{Verification of (iv) Convergence of variance.}
Recall the definition:
\[
\widetilde{\zeta}_{i,n}^{(j)} := \frac{1}{\sqrt{n}} \l\{ f_j(Y_{i-1}^n) - \E[f_j(Y_{i-1}^n)] \r\}, \quad Y_i^n := Z_{t_i} + \delta_i^n,
\]
with $\delta_i^n := -\int_0^{t_i} \mu_{\widehat{\xi}_n}(s) \,\df s + \int_0^{t_i} \mu_{\xi_0}(s) \,\df s = o_P(1)$ uniformly in $i$.

Let us define $X_i^n := \frac{1}{\sqrt{n}} \l\{ f_j(Z_{t_{i-1}}) - \E[f_j(Z_{t_{i-1}})] \r\}$. Then we have
\[
\widetilde{\zeta}_{i,n}^{(j)} = X_i^n + R_i^n, \quad \text{with } R_i^n := \frac{1}{\sqrt{n}} \l\{ f_j(Y_{i-1}^n) - f_j(Z_{t_{i-1}}) - \E[f_j(Y_{i-1}^n) - f_j(Z_{t_{i-1}})] \r\}.
\]

By the smoothness and polynomial growth of $f_j$, and the fact that $\delta_i^n = o_P(1)$ uniformly, we obtain
\[
\sup_{1 \le i \le n} |R_i^n| = o_P(n^{-1/2}),
\quad \text{hence } \sum_{i=1}^n R_i^n = o_P(1).
\]

It follows that
\[
\sum_{i=1}^n \widetilde{\zeta}_{i,n}^{(j)} = \sum_{i=1}^n X_i^n + o_P(1),
\quad \text{so that } \Var\l( \sum_{i=1}^n \widetilde{\zeta}_{i,n}^{(j)} \r)
= \Var\l( \sum_{i=1}^n X_i^n \r) + o(1).
\]

Now consider
\[
\Var\l( \sum_{i=1}^n X_i^n \r)
= \sum_{i=1}^n \Var(X_i^n) + 2 \sum_{1 \le i < j \le n} \Cov(X_i^n, X_j^n).
\]
Since $X_i^n = \frac{1}{\sqrt{n}} \{ f_j(Z_{t_i}) - \E[f_j(Z_{t_i})] \}$ and $\{ Z_{t_i} \}$ is a stationary Gaussian process, the sequence $f_j(Z_{t_i})$ is stationary and $\alpha$-mixing under appropriate conditions on $K$.
Therefore, by standard results for weakly dependent stationary sequences (see e.g., Ibragimov and Rozanov~(1978), Doukhan~(1994)), we have the convergence
\[
\Var\l( \sum_{i=1}^n X_i^n \r)
= \sum_{h = -(n-1)}^{n-1} \l(1 - \frac{|h|}{n}\r) \Cov\l( f_j(Z_0), f_j(Z_{|h|}) \r) \to \Gamma_{j}^2,
\]
where
\[
\Gamma_{j}^2 := \sum_{h \in \mathbb{Z}} \Cov\l( f_j(Z_0), f_j(Z_h) \r) \in (0,\infty),
\]
provided that $K \in L^1(\mathbb{R})$ and $f_j$ is of polynomial growth.
Combining the above with $\sum_{i=1}^n R_i^n = o_P(1)$, we conclude
\[
\Var\l( \sum_{i=1}^n \widetilde{\zeta}_{i,n}^{(j)} \r) \to \Gamma_{j}^2,
\]
as required. Therefore, by Theorem 2.1 of Neumann~\cite{n13},
\[
\sqrt{n} \Phi_n^{(j)}(\sigma_0) \xrightarrow{d} \mathcal{N}(0, \Gamma_{j}^2),
\]
for some variance $\Gamma_{j}^2$. Applying this component-wise for $j=1,\dots,q$, we obtain the vectorial convergence
\[
\sqrt{n} \Phi_n(\sigma_0) \xrightarrow{d} \mathcal{N}(\mb{0}, \Gamma^2).
\]
Combining with the convergence of $A_n^{-1} \toP A^{-1}$, we conclude that
\[
\sqrt{n}(\wt{\s}_n - \sigma_0) \xrightarrow{d} \mathcal{N}(0, A^{-1} \Gamma^2 A^{-1}),
\]
as desired.

\qed

\ \vspace{1cm}\ \\
\noindent {\bf\large Acknowledgement.} 
This work is partially supported by JSPS KAKENHI Grant-in-Aid for Scientific Research (C) \#24K06875; Japan Science and Technology Agency CREST \#JPMJCR2115. 

\
\appendix
\section{Limit theorems for stationary Gaussian processes}\label{app:A}

\begin{lem}\label{lem:m-DZ}
Let $Z=(Z_t)_{t\ge0}\sim GP(0,K)$ be centered and stationary.
Assume A\ref{as:local-kernel} and A\ref{as:increment-covariance}.
Let $m:[0,\infty)\to\mathbb{R}$ be measurable and suppose that
\[
\frac1{a_n}\sum_{i=1}^n m(t_{i-1})^2=O(1).
\]
Then
\[
\sum_{i=1}^n m(t_{i-1})\Delta_i^nZ=O_P\left((a_nh_n^2)^{1/2}\right).
\]
\end{lem}

\begin{proof}
Set $S_n(m):=\sum_{i=1}^n m(t_{i-1})\Delta_i^nZ$.
Since $Z$ is centered, we have $\E[S_n(m)]=0$.
Moreover,
\[
\Var(S_n(m))=\sum_{i,j=1}^n m(t_{i-1})m(t_{j-1})\Cov(\Delta_i^nZ,\Delta_j^nZ).
\]
Using the inequaity $2|xy|\le x^2+y^2$, we obtain
\begin{align*}
|\Var(S_n(m))|&\le \frac12\sum_{i,j=1}^n \{m(t_{i-1})^2+m(t_{j-1})^2\}\left|\Cov(\Delta_i^nZ,\Delta_j^nZ)\right| \\
&\le \sum_{i=1}^n m(t_{i-1})^2\sum_{j=1}^n \left|\Cov(\Delta_i^nZ,\Delta_j^nZ)\right|. 
\end{align*}
By A\ref{as:increment-covariance},
\[
|\Var(S_n(m))|\le C h_n^\beta\sum_{i=1}^n m(t_{i-1})^2.
\]
Since $a_n^{-1}\sum_{i=1}^n m(t_{i-1})^2=O(1)$, we get $\Var(S_n(m))=O(nh_n^2$.
Therefore, $S_n(m)=O_P((a_nh_n^2)^{1/2})$.
\end{proof}

\begin{lem}\label{lem:diff-L2}
Let $Z = (Z_t)_{t \ge 0} \sim GP(0,K)$ with the conditions A\ref{as:2} and A\ref{as:3}. 
Then, there eixists the `mean-square derivative' $\dot{Z}_t$ in the sense that 
\begin{align}
\frac{Z_{t+h} - Z_t}{h} \stackrel{L^2}{\longrightarrow} \dot{Z}_t,\quad h\to 0, \label{diff-L2}
\end{align}
and $\dot{Z} \sim GP\l(0, -\p^2_t K(0)\r)$. In addition, suppose A\ref{as:4}. Then, $\dot{Z}$ is ergodic in the sense of Corollary  \ref{cor:ergodic}.
\end{lem}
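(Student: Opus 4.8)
\emph{Existence and law of $\dot Z$.} The plan is to verify the Lo\`eve (mean-square Cauchy) criterion for the difference quotients $\xi_h := h^{-1}(Z_{t+h}-Z_t)$. Since $Z\sim GP(0,K)$ is centered and stationary,
\[
\E\!\l[\xi_h\,\xi_{h'}\r]=\frac{K(|h-h'|)-K(h)-K(h')+K(0)}{h\,h'} .
\]
Setting $\psi(u):=K(u)-K(0)-\tfrac12\p_t^2K(0)u^2$, condition A\ref{as:3} gives $\psi\in C^2$ near $0$ with $\psi(0)=\psi'(0)=\psi''(0)=0$, whence a double-integral bound of the form $|\psi(h)-\psi(h-h')-\psi(h')|\le hh'\,\sup_{[0,h]}|\psi''|$ shows the $\psi$-part of the numerator is $o(hh')$ regardless of the relative size of $h,h'$, while the quadratic part contributes $\tfrac12\p_t^2K(0)\bigl[(h-h')^2-h^2-h'^2\bigr]=-\p_t^2K(0)\,hh'$. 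Hence $\E[\xi_h\xi_{h'}]\to-\p_t^2K(0)$, a finite limit, and by completeness of $L^2(\P)$ the mean-square limit $\dot Z_t$ in \eqref{diff-L2} exists. Running the same computation with quotients based at $t$ and at $s$ yields $\E[\dot Z_t\dot Z_s]=-\p_t^2K(|t-s|)$, so $\dot Z$ is centered and stationary with kernel $-\p_t^2K(\cdot)$; in particular its marginal law is $\mathcal{N}(0,-\p_t^2K(0))$ with $-\p_t^2K(0)\in(0,\infty)$ --- finite and nonzero by A\ref{as:3}, and positive because a positive definite $K$ is maximal at $0$, forcing $\p_t^2K(0)\le0$ --- which is the assertion $\dot Z\sim GP(0,-\p_t^2K(0))$. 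Gaussianity is automatic, since each $(\dot Z_{t_1},\dots,\dot Z_{t_d})$ is an $L^2$-limit of Gaussian vectors and that class is $L^2$-closed.

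\emph{Ergodicity.} I would argue at the spectral level. Write $K(t)=\int e^{it\la}\,\nu(\df\la)$ with $\nu$ the finite symmetric spectral measure of $Z$. By A\ref{as:2} the process $Z$ is weakly mixing (Maruyama), so $\nu$ has no atoms; the spectral measure of $\dot Z$ is $\la^2\,\nu(\df\la)$, which is finite ($\int\la^2\nu(\df\la)=-\p_t^2K(0)<\infty$) and again atomless. Equivalently, a compactness argument for positive definite functions shows that under A\ref{as:2} and A\ref{as:4} the kernel $-\p_t^2K(\cdot)$ of $\dot Z$ tends to $0$ at infinity: shifting $K$ by $t_n\to\infty$ and extracting a locally uniform limit, the limit is positive definite with value $0$ at the origin, hence identically $0$, and this propagates to the second derivative using $\p_tK\to0$. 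In either formulation $\dot Z$ meets the hypotheses of Corollary~\ref{cor:ergodic}, which yields the ergodicity of $\dot Z$. (A shorter route to abstract ergodicity: $\omega\mapsto(\dot Z_t(\omega))_{t\ge0}$ is a measurable map of the $Z$-path intertwining the time shifts, so $\dot Z$ is a factor of the ergodic system generated by $Z$.)

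\emph{Main obstacle.} The first part is essentially bookkeeping once A\ref{as:3} is used. The point requiring care is the ergodicity: one must confirm that $\dot Z$ genuinely falls within the scope of Corollary~\ref{cor:ergodic}, i.e.\ that the non-atomicity/decay of the spectral data is inherited by $\dot Z$. Since $\p_tK(t)\to0$ does not pass formally to a pointwise statement about $\p_t^2K(t)$, this is cleanest to handle spectrally, or via the positive definite compactness argument above, rather than by direct manipulation of $-\p_t^2K$.
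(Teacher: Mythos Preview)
Your argument is correct. The paper itself gives no proof at all: it simply cites Ibragimov and Rozanov~\cite{ir78}, Section~I.7.1, for the entire lemma. So your self-contained treatment---verifying the Lo\`eve criterion via the second-order Taylor remainder of $K$, reading off the kernel $-\p_t^2K(\cdot)$ of $\dot Z$, and deducing ergodicity spectrally from the atomlessness of $\la^2\nu(\df\la)$---is genuinely more than what the paper offers. Two remarks are worth making. First, your spectral argument for ergodicity uses only A\ref{as:2} and A\ref{as:3} (atomlessness of $\nu$ and finiteness of $\int\la^2\,\nu(\df\la)$); the additional hypothesis A\ref{as:4} plays no role there, so you have in fact shown slightly more than the lemma asserts. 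Second, your alternative ``compactness argument'' that $-\p_t^2K(t)\to0$ is not convincing as written: translates $K(\cdot+t_n)$ are not themselves positive definite, so one cannot conclude that a locally uniform limit is positive definite and hence identically zero. Since your spectral route (and the factor-map observation) already settle ergodicity, this does not affect the validity of your proof, but that paragraph should be dropped or replaced rather than left as an ``equivalent'' route.
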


\begin{proof}
See Ibragimov and Rozanov \cite{ir78}, Section I.7.1.
\end{proof}

\begin{lem}\label{lem:approx-integral}
Let $Z = (Z_t)_{t \ge 0} \sim GP(0,K)$ with the conditions A\ref{as:2}--A\ref{as:4}. 
Then, for all functions $f:\mathbb{R} \to \mathbb{R}$ which is continuous and is of polynomial growth:
\[
|f(x)| \le C(1 + |x|^p), \quad \text{for all } x \in \mathbb{R},
\]
for some $C>0$ and $p \ge 1$, it holds that 
\[
\frac{1}{nh_n} \int_0^{nh_n} \l\{ f\l( \frac{Z_{t+h_n} - Z_t}{h_n} \r) - f(\dot{Z}_t) \r\} dt \stackrel{L^1}{\longrightarrow} 0, 
\]
under $h_n \to 0$ and $nh_n \to \infty$ as $n\to \infty$. 
\end{lem}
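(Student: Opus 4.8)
Write $U_t^{h}:=h^{-1}(Z_{t+h}-Z_t)$ and $T_n:=nh_n$, and fix a jointly measurable (in fact continuous) version of $Z$, which exists under A\ref{as:3}; by Lemma~\ref{lem:diff-L2} the mean-square derivative $\dot Z$ also admits a measurable version, so that $(t,\omega)\mapsto f(U_t^{h_n}(\omega))-f(\dot Z_t(\omega))$ is jointly measurable. The plan is to control the $L^1$-norm of the time average by passing the expectation through the $\df t$-integral, use joint stationarity to collapse the time integral into a single expectation, and finally let $h_n\to 0$.

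First I would use the triangle inequality for integrals and Tonelli's theorem to get
\[
\E\l|\frac{1}{T_n}\int_0^{T_n}\l\{f(U_t^{h_n})-f(\dot Z_t)\r\}\,\df t\r|
\le \frac{1}{T_n}\int_0^{T_n}\E\l|f(U_t^{h_n})-f(\dot Z_t)\r|\,\df t ,
\]
the interchange being legitimate because $f$ has polynomial growth while $U_t^{h_n}$ and $\dot Z_t$ are Gaussian with variances that are constant in $t$ and bounded in $n$ (see below), so the double integral of the absolute value is finite. Next I would note that, for each fixed $h$, the pair $(U_t^{h},\dot Z_t)_{t\ge0}$ is a centered, jointly Gaussian process whose covariance function depends on $(t,s)$ only through $t-s$ (every entry is expressed through $K$ evaluated at differences, by the stationarity of $Z$); hence it is strictly stationary, so $\E|f(U_t^{h_n})-f(\dot Z_t)|$ does not depend on $t$ and equals $a_n:=\E|f(U_0^{h_n})-f(\dot Z_0)|$. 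The right-hand side above therefore reduces to $a_n$, and it remains to prove $a_n\to0$.

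For the last step, Lemma~\ref{lem:diff-L2} gives $U_0^{h_n}\to\dot Z_0$ in $L^2$, hence in probability, so the continuity of $f$ yields $f(U_0^{h_n})-f(\dot Z_0)\toP0$. Since $\Var(U_0^{h_n})=2h_n^{-2}[K(0)-K(h_n)]\to-\p_t^2K(0)$ by A\ref{as:3}, we have $\sup_n\E|U_0^{h_n}|^{2p}<\infty$ and $\E|\dot Z_0|^{2p}<\infty$; combined with $|f(U_0^{h_n})-f(\dot Z_0)|^2\lesssim 1+|U_0^{h_n}|^{2p}+|\dot Z_0|^{2p}$, this shows that $\{f(U_0^{h_n})-f(\dot Z_0)\}_n$ is bounded in $L^2$, hence uniformly integrable; together with the convergence in probability this gives $a_n\to0$, completing the argument. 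The genuine estimates here are elementary Gaussian moment bounds; the points deserving care are the choice of a measurable version of $\dot Z$ and the verification that $(U_t^{h},\dot Z_t)_t$ is jointly Gaussian and stationary — so that the reduction to a single expectation is valid — and I expect this bookkeeping, rather than any inequality, to be the main thing to pin down.
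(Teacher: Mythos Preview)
Your proposal is correct and follows essentially the same route as the paper: pass the expectation inside the time integral, establish $\E|f(U_t^{h_n})-f(\dot Z_t)|\to 0$ via convergence in probability plus uniform integrability (the paper invokes Vitali after a H\"older--tail estimate, you use the simpler $L^2$-boundedness), and conclude. Your explicit use of the joint stationarity of $(U_t^{h},\dot Z_t)_t$ to collapse the time average to a single expectation $a_n$ is a cleaner version of what the paper leaves implicit in its final ``Fubini and dominated convergence'' line.
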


\begin{proof}
Since $Z$ is mean-square differentiable \eqref{diff-L2}, also in probability,  it holds that 
\[
f\l( \frac{Z_{t+h_n} - Z_t}{h_n} \r) \toP f(\dot{Z}_t),\quad n\to \infty, 
\]
by the continuous mapping theorem. 

To conclude $L^1$ convergence of the integrals, we verify uniform integrability. By the growth condition on $f$, we have
\[
\l| f\l( \frac{Z_{t+h_n} - Z_t}{h_n} \r) \r| \le C(1 + |Y_n(t)|^p),
\]
where $Y_n(t) := \frac{Z_{t+h_n} - Z_t}{h_n}$. We estimate
\[
\E\l[ |f(Y_n(t))| \cdot \mathbf{1}_{\{ |f(Y_n(t))| > K \}} \r]
\le C \E\l[ (1 + |Y_n(t)|^p) \cdot \mathbf{1}_{\{ |Y_n(t)| > \lambda \}} \r],
\]
for $\lambda := \l( \frac{K}{C} - 1 \r)^{1/p}$. By H\"older's inequality,
\[
\E[|Y_n(t)|^p \cdot \mathbf{1}_{\{ |Y_n(t)| > \lambda \}}]
\le \l( \E[|Y_n(t)|^q] \r)^{p/q} \cdot \l( \mathbb{P}(|Y_n(t)| > \lambda) \r)^{1 - p/q}, 
\]
for $p>0$ and $q\in (1,2)$ with $1/p + 1/q=1$. 
Since $Y_n(t) \to \dot{Z}_t$ in $L^2$, the sequence $\{Y_n(t)\}$ is bounded in $L^q$ uniformly in $n$, and the tail probability decays rapidly since it has a Gaussian tail. 
Thus the upper bound in the last right-hand side tends to $0$ as $\lambda \to \infty$, uniformly in $n$. Therefore, the family $\{ f(Y_n(t)) \}_n$ is uniformly integrable.
Hence it follows by Vitali's convergence theorem that 
\[
\E\l| f\l( \frac{Z_{t+h_n} - Z_t}{h_n} \r) - f(\dot{Z}_t) \r|\to  0,\quad n\to \infty. 
\]
Finally, using Fubini's theorem and dominated convergence,
\begin{align*}
&\E \l|\frac{1}{nh_n} \int_0^{nh_n}\l[ f\l( \frac{Z_{t+h_n} - Z_t}{h_n} \r) - f(\dot{Z}_t) \r] \df t \r| \\
&= \frac{1}{nh_n} \int_0^{nh_n}\E \l| f\l( \frac{Z_{t+h_n} - Z_t}{h_n} \r) - f(\dot{Z}_t) \r|\,\df t \to 0,\quad n\to \infty. 
\end{align*}
This completes the proof. 
\end{proof}

\begin{lem}\label{lem:avg-conv}
Let $Z = (Z_t)_{t \ge 0} \sim GP(0,K)$ with A\ref{as:2}--A\ref{as:4}, and let $f:\mathbb{R}\times \ol{\Theta} \to \mathbb{R}$ be continuous and of polynomial growth:
\begin{align*}
|\p_\th^k f(x,\th)| \le C(1 + |x|^p), \quad \text{for all } x \in \mathbb{R},\ k=0,1, 
\end{align*}
for some $C>0$ and $p \ge 1$. Then, it holds that 
\[
\frac{1}{n} \sum_{i=1}^n f\l( \frac{\Delta_i^n Z}{h_n} ,\th \r) \toP \int_\R f(z,\th)\phi_{-\p^2_t K(0)}(z)\,\df z, 
\]
uniformly in $\th\in \ol{\Theta}$, under $h_n \to 0$, $n h_n \to \infty$ as $n \to \infty$. 
\end{lem}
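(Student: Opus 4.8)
The plan is to transfer the disjoint-increment average into a continuous-time ergodic average of the mean-square derivative $\dot Z$ from Lemma~\ref{lem:diff-L2}, keeping the two asymptotic regimes $h_n\to0$ and $nh_n\to\infty$ decoupled. Fix $\th\in\ol{\Theta}$ and write, with $\dot Z\sim GP(0,-\p^2_t K(0))$,
\[
\frac1n\sum_{i=1}^n f\!\left(\frac{\D_i^n Z}{h_n},\th\right)-\int_\R f(z,\th)\phi_{-\p^2_t K(0)}(z)\,\df z = I_n(\th)+II_n(\th)+III_n(\th),
\]
where $I_n(\th):=\frac1n\sum_{i=1}^n\big[f(\D_i^n Z/h_n,\th)-f(\dot Z_{t_{i-1}},\th)\big]$, $II_n(\th):=\frac1n\sum_{i=1}^n f(\dot Z_{t_{i-1}},\th)-\frac1{nh_n}\int_0^{nh_n}f(\dot Z_t,\th)\,\df t$, and $III_n(\th):=\frac1{nh_n}\int_0^{nh_n}f(\dot Z_t,\th)\,\df t-\E[f(\dot Z_0,\th)]$; the last expectation is exactly the asserted limit since $\dot Z_0\sim\mathcal{N}(0,-\p^2_t K(0))$.

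For $I_n$ and $II_n$ I would prove $L^1$-convergence. Because $(Z,\dot Z)$ is a jointly Gaussian stationary process, $\E|f(\D_i^n Z/h_n,\th)-f(\dot Z_{t_{i-1}},\th)|$ does not depend on $i$ and equals $\E|f((Z_{h_n}-Z_0)/h_n,\th)-f(\dot Z_0,\th)|$; since $(Z_{h_n}-Z_0)/h_n\to\dot Z_0$ in $L^2$ by Lemma~\ref{lem:diff-L2} (hence in probability), and since the variance $2[K(0)-K(h_n)]/h_n^2$ stays bounded (and tends to $-\p^2_t K(0)$ by A\ref{as:3}), the polynomial growth of $f$ supplies uniform integrability, so Vitali's theorem makes this expectation vanish and $\E|I_n(\th)|\to0$. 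Likewise, $II_n(\th)=\frac1{nh_n}\sum_{i=1}^n\int_{t_{i-1}}^{t_i}[f(\dot Z_{t_{i-1}},\th)-f(\dot Z_t,\th)]\,\df t$, and stationarity gives $\E|II_n(\th)|\le\sup_{0\le s\le h_n}\E|f(\dot Z_0,\th)-f(\dot Z_s,\th)|$, which tends to $0$ as $h_n\to0$ because $\dot Z$ is mean-square continuous (its covariance $\tau\mapsto-\p^2_t K(\tau)$ is continuous since $K\in C^2$) and $f$ is continuous of polynomial growth. Finally, $\dot Z$ is stationary and ergodic by Lemma~\ref{lem:diff-L2} under A\ref{as:4}, so Corollary~\ref{cor:ergodic} applied with $g=f(\cdot,\th)$ and $nh_n\to\infty$ gives $III_n(\th)\toP0$. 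Summing the three pieces yields the pointwise-in-$\th$ convergence.

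To upgrade to uniformity over the compact set $\ol{\Theta}$ I would use the standard pointwise-plus-stochastic-equicontinuity route. The gradient bound $|\p_\th f(x,\th)|\le C(1+|x|^p)$ gives $\big|\frac1n\sum_{i=1}^n f(\D_i^n Z/h_n,\th_1)-\frac1n\sum_{i=1}^n f(\D_i^n Z/h_n,\th_2)\big|\le|\th_1-\th_2|\,M_n$ with $M_n:=\frac1n\sum_{i=1}^n C(1+|\D_i^n Z/h_n|^p)$, and $\E M_n$ is bounded (again by the bounded variance of $\D_i^n Z/h_n$), so $M_n$ is bounded in probability; moreover the limit map $\th\mapsto\int_\R f(z,\th)\phi_{-\p^2_t K(0)}(z)\,\df z$ is continuous on $\ol{\Theta}$ by dominated convergence. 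Covering $\ol{\Theta}$ by finitely many small balls, bounding the oscillation of $\th\mapsto\frac1n\sum f(\D_i^n Z/h_n,\th)$ on each ball by $M_n$ times the radius and that of the limit by its modulus of continuity, and invoking the convergence at the finitely many centres, yields $\sup_{\th\in\ol{\Theta}}|I_n(\th)+II_n(\th)+III_n(\th)|\toP0$, which is the claim.

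No step here is deep. The points that need genuine care are the uniform-in-$i$ (equivalently, uniform-in-time) passages from $L^2$-closeness of the arguments to $L^1$-closeness of the $f$-values in $I_n$ and $II_n$, which rely on the uniform integrability produced by stationarity and Gaussianity together with the polynomial growth of $f$; and, at the structural level, it is essential that the decomposition isolates the limit $h_n\to0$ (driving $I_n$ and $II_n$ through Lemma~\ref{lem:diff-L2}) from the limit $nh_n\to\infty$ (driving $III_n$ through Corollary~\ref{cor:ergodic}), since the two regimes pull in opposite directions.
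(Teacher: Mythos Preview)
Your argument is correct and follows the same overall strategy as the paper: a three-term decomposition that isolates the $h_n\to0$ regime (approximation by the mean-square derivative $\dot Z$) from the $nh_n\to\infty$ regime (ergodic theorem for $\dot Z$ via Corollary~\ref{cor:ergodic}), followed by a standard equicontinuity argument for uniformity in $\th$.

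The one tactical difference is the order of the first two steps. The paper first passes from the discrete sum $\frac1n\sum f(\D_i^nZ/h_n,\th)$ to the continuous-time average $\frac1{nh_n}\int_0^{nh_n} f((Z_{t+h_n}-Z_t)/h_n,\th)\,\df t$ (its $G_n^1$, handled pathwise via the mean-value theorem for integrals), and only then replaces the difference quotient by $\dot Z_t$ inside the integral using Lemma~\ref{lem:approx-integral} (its $G_n^2$). You instead replace $\D_i^nZ/h_n$ by $\dot Z_{t_{i-1}}$ already at the discrete nodes (your $I_n$), and only afterwards pass to the integral of $f(\dot Z_t,\th)$ (your $II_n$). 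Your ordering has the minor advantage that stationarity of the pair $(Z,\dot Z)$ collapses $\E|I_n(\th)|$ to a single expectation independent of $i$, so you do not need a separate lemma of the type of Lemma~\ref{lem:approx-integral}; the paper's ordering, on the other hand, keeps the $G_n^1$ step entirely pathwise. For the uniformity in $\th$, the paper uses the tightness criterion $\sup_n\E[\sup_\th|\p_\th G_n(\th)|]<\infty$, while you use the stochastic Lipschitz constant $M_n=O_P(1)$ together with a finite cover of $\ol\Theta$; these are interchangeable standard devices.
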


\begin{proof}
First, we shall show the convergence for each fixed $\th\in \ol{\Theta}$. 

Accroding to the stationarity of $Z$, we see that 
\begin{align*}
Y_i^n := \frac{\Delta_i^n Z}{h_n} = h_n^{-1} (1 , -1)\begin{pmatrix}Z_{t_i^n} \\ Z_{t_{i-1}^n} \end{pmatrix} \sim N\l(0, 2[K(0) - K(h_n)]\r). 
\end{align*}
Hence 
\begin{align*}
Y_i^n \toD \dot{Z}_0 \sim {\cal N}(0, -\partial^2 K(0)),\quad n\to \infty. 
\end{align*}
Therefore, we shall show that, for each $\th\in \ol{\Theta}$, 
\[
G_n(\th):=\frac{1}{n}\sum_{i=1}^n f(Y_{i-1}^n,\th)  - \E[f(\dot{Z}_0,\th)] \toP 0,\quad n\to \infty.
\]
Since $Y_{i-1}^n  \sim N\l(0, 2[K(0) - K(h_n)]\r)$ and $\dot{Z}_0 \sim N\l(0,-\p^2_t K(0)\r)$, it follows by the dominated convergence theorem that 
\[
\lim_{n \to \infty} \E[f(Y_{i-1}^n,\th)] = \E[f(\dot{Z}_0,\th)] = \int_\R f(z,\th)\phi_{-\p^2_t K(0)}(z)\,\df z.
\]
Define $g_\th(t,h) := f\l( \frac{Z_{t+h} - Z_t}{h} \r)$, so that $f(Y_{i-1}^n,\th) = g_\th(t_{i-1},h_n)$. 
Then, using the notation in Lemma \ref{lem:approx-integral}, we have
\begin{align*}
\l|G_n(\th)\r| &\le \l|\frac{1}{nh_n}\sum_{i=1}^n g_\th(t_{i-1},h_n) h_n - \frac{1}{nh_n}\int_0^{nh_n} g_\th(t,h_n)\,\df t \r|\\
&\quad + \l|\frac{1}{nh_n}\int_0^{nh_n} \l[ g_\th(t,h_n) - f(\dot{Z}_t,\th) \r]\,\df t \r| + \l|\frac{1}{nh_n}\int_0^{nh_n}f(\dot{Z}_t,\th)\,\df t -  \E[f(\dot{Z}_0,\th)]\r|\\
& =: G_n^1 + G_n^2+ G_n^3. 
\end{align*}
Note that $G_n^2\stackrel{L^1}{\longrightarrow} 0$ by Lemma \ref{lem:approx-integral}, and that $G_n^3 \to 0\ a.s.\ (n\to \infty)$ by the ergodicity of $\dot{Z}$.  
To complete the proof, we show $G_n^1 \to 0\ a.s.$ as $n \to \infty$. 

Note that, for fixed $h > 0$, the function $t \mapsto g_\th(t,h)$ is continuous in $t$, since $Z$ has continuous sample paths almost surely and $f$ is continuous.
On each subinterval $[t_{i-1}, t_i]$, by the mean value theorem for integrals, there exists $\tau_i \in [t_{i-1}, t_i]$ such that
\[
\int_{t_{i-1}}^{t_i} g_\th(t, h_n) \,\df t = g_\th(\tau_i, h_n) h_n\quad a.s.
\]
Therefore, we can write
\[
G_n^1 = \l| \frac{1}{nh_n} \sum_{i=1}^n \l[ g_\th(t_{i-1}, h_n) - g_\th(\tau_i, h_n) \r] h_n \r| 
= \frac{1}{n} \sum_{i=1}^n \l| g_\th(t_{i-1}, h_n) - g_\th(\tau_i, h_n) \r|.
\]
Since $g(t,h)$ is uniformly continuous in $t$ on compacts, and $|\tau_i - t_{i-1}| \le h_n \to 0$ we see that 
\[
G_n^1 \le \sup_{|s - t| \le h_n} |g_\th(s, h_n) - g_\th(t, h_n)| \to 0\quad a.s., 
\]
as $h_n\to 0$. 

For the uniformity of convergence, we shall show the tightness of the random functions $G_n(\th) (n=1,2,\dots)$, which is confirmed via the following tightness criterion: 
\[
\sup_n\E\l[ \sup_{\th\in \ol{\Theta}} \l|\p_\th G_n(\th)\r|\r] < \infty. 
\]
Actually, it is easy to see by the condition \eqref{lem:avg-conv} that 
\begin{align*}
\sup_{\th\in \ol{\Theta}} \l|\p_\th G_n(\th)\r| \lesssim \frac{1}{n}\sum_{i=1}^n (1 + |Y_{i-1}^n|^C) + \int_\R (1 + |z|^C)\phi_{-\p^2_t K(0)}(z)\,\df z, 
\end{align*}
which is integrable uniformly in $n\in \N$. Hence, the proof is completed. 
\end{proof}

\begin{cor}\label{cor:moment}
Under the same assumptions as in Lemma \ref{lem:avg-conv}, it follows for any integer $\k\ge 1$ that
\[
\frac{1}{nh_n^ {2\k}} \sum_{i=1}^n (\D_i^n Z)^{2\k} \toP \frac{(2\k)!}{2^\k\cdot \k!} \l(-\p^2_t K(0)\r)^\k, 
\]
as $n\to \infty$. 
\end{cor}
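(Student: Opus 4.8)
The plan is to invoke Lemma~\ref{lem:avg-conv} with the $\th$-independent test function $f(x,\th) := x^{2\k}$ and then perform an elementary Gaussian moment computation. First I would check that $f(x)=x^{2\k}$ meets the hypotheses of Lemma~\ref{lem:avg-conv}: it is continuous, and since it does not depend on $\th$ we have $\p_\th f \equiv 0$, while $|f(x)| = |x|^{2\k} \le 1 + |x|^{2\k}$, so the polynomial growth bound $|\p_\th^k f(x,\th)|\le C(1+|x|^p)$ holds with $p = 2\k$ for both $k=0$ and $k=1$. Applying the lemma then gives
\[
\frac{1}{n}\sum_{i=1}^n \l(\frac{\D_i^n Z}{h_n}\r)^{2\k} \toP \int_\R z^{2\k}\,\phi_{-\p_t^2 K(0)}(z)\,\df z,
\]
under $h_n\to 0$, $nh_n\to\infty$, and since $\l(\D_i^n Z/h_n\r)^{2\k} = (\D_i^n Z)^{2\k}/h_n^{2\k}$, the left-hand side is exactly $\dfrac{1}{nh_n^{2\k}}\sum_{i=1}^n (\D_i^n Z)^{2\k}$.

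The remaining step is to identify the limit. Writing $v := -\p_t^2 K(0)$, which is strictly positive by A\ref{as:3} so that the density $\phi_v$ is non-degenerate, the quantity $\int_\R z^{2\k}\phi_v(z)\,\df z$ is the $2\k$-th moment of an $\mathcal{N}(0,v)$ variable. By the classical even-order Gaussian moment formula (Isserlis/Wick), this equals $(2\k-1)!!\,v^\k = \dfrac{(2\k)!}{2^\k\,\k!}\,v^\k$, which is precisely the asserted limit.

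There is essentially no genuine obstacle here: the corollary is a direct specialization of Lemma~\ref{lem:avg-conv}. The only points requiring (minimal) care are the routine verification that $x\mapsto x^{2\k}$ lies in the growth class admitted by Lemma~\ref{lem:avg-conv} and the bookkeeping of the even Gaussian moment; one could alternatively appeal only to the pointwise (non-uniform) version of the argument, since no parameter dependence is present in this application.
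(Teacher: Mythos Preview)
Your proposal is correct and is essentially the paper's own argument: apply Lemma~\ref{lem:avg-conv} with the power function $f(x)=x^{2\k}$ and identify the limit via the standard even Gaussian moment formula $\E[G^{2\k}]=\dfrac{(2\k)!}{2^\k\,\k!}\,\s^{2\k}$ for $G\sim\mathcal{N}(0,\s^2)$. The extra care you take in verifying the growth condition and the positivity of $-\p_t^2 K(0)$ is appropriate but already implicit in the setup.
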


\begin{proof}
Take $f(x) = x^\k$ in Lemma \ref{lem:avg-conv}, and note that $\E[G^\k] = \frac{(2\k)!}{2^\k\cdot \k!} \s^{2\k}$ for $G\sim {\cal N}(0,\s^2)$. 

\end{proof}

\begin{lem}\label{lem:ergod}
Let $Z = (Z_t)_{t \in \mathbb{R}}$ be a centered stationary Gaussian process with the conditions A\ref{as:2} and  A\ref{as:3}, and 
let $f(x,\th): \R\times \ol{\Theta} \to \R$ be a measureble function such that 
\begin{align}
\sup_{\th \in \ol{\Theta}}|\p_x^k \p_\th^l f(x,\th)| \lesssim 1 + |x|^C, \label{cond:ergod}
\end{align}
for integers $k$ and $l$ with $0\le k+l\le 1$. 
Then, under the conditions that $h_n \to 0$ and $nh_n\to \infty$ as $n\to \infty$, the following hold true:
\[
\sup_{\th \in \ol{\Theta}}\l|\frac{1}{n} \sum_{i=1}^n f(Z_{t_{i-1}^n},\th) - \int_\R f(z,\th)\phi_{K(0)}(z)\,\df z\r|\toP 0,
\]
as $n\to \infty$. 
\end{lem}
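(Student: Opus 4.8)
The statement is a uniform law of large numbers for the ergodic average of a polynomially growing functional of the stationary Gaussian process $Z$, sampled on the grid $t_i^n = ih_n$. The plan is to mimic the proof of Lemma~\ref{lem:avg-conv}, but with $Z$ itself---which is ergodic under A\ref{as:2} by Corollary~\ref{cor:ergodic}---playing the role that the mean-square derivative $\dot Z$ plays there. Write
\[
G_n(\th) := \frac1n\sum_{i=1}^n f(Z_{t_{i-1}^n},\th) - \int_\R f(z,\th)\phi_{K(0)}(z)\,\df z.
\]
Since $Z$ is stationary, $Z_{t_{i-1}^n}\sim\mathcal{N}(0,K(0))$ for every $i$ and $n$, so $\E[f(Z_{t_{i-1}^n},\th)] = \int_\R f(z,\th)\phi_{K(0)}(z)\,\df z$ \emph{exactly}; hence it suffices to prove $\frac1n\sum_{i=1}^n f(Z_{t_{i-1}^n},\th)\toP\E[f(Z_0,\th)]$ for each fixed $\th\in\ol{\Theta}$, and then upgrade to uniformity.

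For fixed $\th$ I would split
\begin{align*}
\l|G_n(\th)\r| &\le \l|\frac{1}{nh_n}\sum_{i=1}^n f(Z_{t_{i-1}^n},\th)\,h_n - \frac{1}{nh_n}\int_0^{nh_n}f(Z_t,\th)\,\df t\r| \\
&\quad + \l|\frac{1}{nh_n}\int_0^{nh_n}f(Z_t,\th)\,\df t - \E[f(Z_0,\th)]\r| =: A_n(\th) + B_n(\th).
\end{align*}
The term $B_n(\th)\to0$ a.s. by the continuous-time ergodic theorem, using Corollary~\ref{cor:ergodic} and $f(Z_0,\th)\in L^1$ (polynomial growth and Gaussian moments). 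For $A_n(\th)$: since $Z$ has continuous sample paths under A\ref{as:3} and $f$ is continuous in $x$, the mean value theorem for integrals yields a random $\tau_i\in[t_{i-1}^n,t_i^n]$ with $\int_{t_{i-1}^n}^{t_i^n}f(Z_t,\th)\,\df t = f(Z_{\tau_i},\th)h_n$, so $A_n(\th) = |\frac1n\sum_{i=1}^n\{f(Z_{t_{i-1}^n},\th)-f(Z_{\tau_i},\th)\}|$. By the mean value theorem in $x$ and the growth bound $\sup_\th|\p_x f(x,\th)|\lesssim 1+|x|^C$,
\[
A_n(\th) \le \frac1n\sum_{i=1}^n\l(1 + \sup_{t\in[t_{i-1}^n,t_i^n]}|Z_t|^C\r)\sup_{t\in[t_{i-1}^n,t_i^n]}|Z_{t_{i-1}^n}-Z_t|,
\]
and taking expectations, Cauchy--Schwarz, stationarity (uniform-in-$i$ Gaussian moments), and the estimate $\E[\sup_{t\in[t_{i-1}^n,t_i^n]}|Z_{t_{i-1}^n}-Z_t|^2]\lesssim h_n^2$ (which follows from $\E[(Z_s-Z_t)^2]=2[K(0)-K(|s-t|)]\lesssim|s-t|^2$ under A\ref{as:3}, together with a standard Gaussian maximal inequality), I obtain $\E[A_n(\th)]\lesssim h_n\to0$. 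Hence $G_n(\th)\toP0$ for every $\th\in\ol{\Theta}$.

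For the uniformity, I would invoke the tightness criterion used in Lemma~\ref{lem:avg-conv}, namely $\sup_n\E[\sup_{\th\in\ol{\Theta}}|\p_\th G_n(\th)|]<\infty$. Here $\p_\th G_n(\th) = \frac1n\sum_{i=1}^n\p_\th f(Z_{t_{i-1}^n},\th) - \int_\R\p_\th f(z,\th)\phi_{K(0)}(z)\,\df z$, and the bound $\sup_\th|\p_\th f(x,\th)|\lesssim 1+|x|^C$ gives $\sup_\th|\p_\th G_n(\th)|\lesssim \frac1n\sum_{i=1}^n(1+|Z_{t_{i-1}^n}|^C) + \int_\R(1+|z|^C)\phi_{K(0)}(z)\,\df z$, whose expectation is bounded uniformly in $n$ by stationarity and finiteness of Gaussian moments. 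Combined with the pointwise convergence along a countable dense subset of the compact set $\ol{\Theta}$, this yields $\sup_{\th\in\ol{\Theta}}|G_n(\th)|\toP0$.

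The step I expect to be the main obstacle is the control of the Riemann-sum error $A_n(\th)$ over the \emph{growing} window $[0,nh_n]$: one cannot appeal to a pathwise uniform modulus of continuity of $Z$ on an unbounded interval, so the argument must pass through the $L^1$ (or $L^2$) bound above, and one must verify that the gain of order $h_n$ on each subinterval genuinely compensates the factor $n$ coming from summing over the $n$ subintervals---which it does precisely because the local increment of a $C^2$-kernel Gaussian process is of order $h_n$; this is where A\ref{as:3} enters. By contrast, A\ref{as:2} is used only to guarantee the ergodicity of $Z$ needed for the term $B_n(\th)$.
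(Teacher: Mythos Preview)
Your proposal is correct and follows essentially the same route as the paper: split into a Riemann-sum error plus a continuous-time ergodic average, control the former in $L^1$ using A\ref{as:3} and the latter via Corollary~\ref{cor:ergodic}, then upgrade to uniformity by the same tightness criterion on $\p_\th G_n$. The only minor difference is that the paper bounds $\E|f(Z_{t_{i-1}^n},\th)-f(Z_u,\th)|$ directly for each $u\in[t_{i-1}^n,t_i^n]$ via Cauchy--Schwarz and the identity $\E|Z_u-Z_{t_{i-1}^n}|^2=2[K(0)-K(u-t_{i-1}^n)]$, thereby avoiding the mean-value-for-integrals step and the Gaussian maximal inequality you invoke.
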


\begin{proof}
Note that
\begin{align*}
&\E\l| \frac{1}{n} \sum_{i=1}^n f(Z_{t_{i-1}^n},\th)  - \frac{1}{nh_n} \int_0^{nh_n} f(Z_u,\th)\,\df u \r| \\
&\le \frac{1}{nh_n} \sum_{i=1}^n \E\int_{t_{i-1}^n}^{t_i^n} |f(Z_{t_{i-1}^n},\th) - f(Z_u,\th)|\,\df u \\
&\le  \frac{1}{nh_n} \sum_{i=1}^n \int_{t_{i-1}^n}^{t_i^n}  \l(\E| Z_u - Z_{t_{i-1}^n}|^2\r)^{1/2}\l( \E\l(\int_0^1 \p_x f(Z_{t_{i-1}^n} + v (Z_u - Z_{t_{i-1}^n}), \th)\,\df v\r)^2\r)^{1/2}\,\df u. 
\end{align*}
Since 
\[
(Z_t,Z_s)^\top \sim {\cal N}(\mb{0}, \Sigma(t,s)),\quad \Sigma(t,s) = 
\begin{pmatrix}
K(0) & K(t-s)\\ 
K(t-s) & K(0)
 \end{pmatrix}, 
\]
we see that
\begin{align*}
\E| Z_u - Z_{t_{i-1}^n}|^2 = 2\l[K(0)- K(u - t_{i-1}^n)\r]. 
\end{align*}
Then Corollary \ref{cor:ergodic} yields the convergence in probability for each $\th\in \ol{\Theta}$:
\begin{align}
\frac{1}{n} \sum_{i=1}^n f(Z_{t_{i-1}^n},\th)  \toP \E[f(Z_0,\th)] = \int_\R f(z,\th)\phi_{K(0)}(z)\,\df z. \label{lem:(i)}
\end{align}

As for the uniformity,  we shall confirm that
\begin{align}
\sup_{n\in \N} \E\l[\sup_{\th \in \ol{\Theta}}\l| \frac{1}{n}\sum_{i=1}^n \p_\th f(Z_{t_{i-1}^n},\th)\r|\r] < \infty,  \label{lem:(ii)}
\end{align}
which is easy to see by the condition \eqref{cond:ergod}. 

Then \eqref{lem:(i)} and \eqref{lem:(ii)} yield the consequence.
\end{proof}

\begin{lem}\label{lem:GZ}
Let $Z = (Z_t)_{t \ge 0} \sim GP(0,K)$ with A\ref{as:2} and  A\ref{as:3}, and let $G(x,y) :\R^2\to \R$ be a function such that $G(x,\cdot)\in C^2(\R)$ for each $x\in \R$.
Suppose that, for each $x\in \R$ and $k=1,2,\dots, l \ (l\ge 2)$, $\p_y^kG(x,y)$ is of polynomial growth w.r.t. $y$, $G(x,x) = 0$, and that  there exists a constant $M>0$ such that $|\p_y^{l+1}G(x,y)| \le M$ for all $x,y\in \R$. Then it holds that
\begin{align*}
\frac{1}{nh_n^2}\sum_{i=1}^n G(Z_{t_{i-1}^n}, Z_{t_i^n})
&\toP  \frac{\p^2 K(0)}{2 K(0)}  \int_\R \l[\p_y G(z,z)\,z - \p_y^2 G(z, z)K(0)\r] \, \phi_{K(0)}(z)\,\df z,
\end{align*}
under $h_n\to 0$ as $n\to \infty$. 
\end{lem}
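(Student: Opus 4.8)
The plan is to Taylor–expand $G$ in its second argument around the base point $Z_{t_{i-1}^n}$, exploiting $G(x,x)=0$, and to reduce the statement to one linear and one quadratic increment sum. Writing $\Psi_k(z):=\p_y^kG(z,z)$, I would expand, for each $i$,
\[
G(Z_{t_{i-1}^n},Z_{t_i^n})=\Psi_1(Z_{t_{i-1}^n})\,\D_i^nZ+\tfrac12\Psi_2(Z_{t_{i-1}^n})(\D_i^nZ)^2+\sum_{k=3}^{l}\tfrac{1}{k!}\Psi_k(Z_{t_{i-1}^n})(\D_i^nZ)^k+\rho_i^n,
\]
with $|\rho_i^n|\le\tfrac{M}{(l+1)!}|\D_i^nZ|^{l+1}$ by the bound on $\p_y^{l+1}G$. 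Since $\D_i^nZ\sim\mathcal N(0,2[K(0)-K(h_n)])$ and $K(0)-K(h_n)=O(h_n^2)$ by A\ref{as:3}, Gaussian moments give $\E|\D_i^nZ|^m=O(h_n^m)$; together with the polynomial growth of $\Psi_k$ ($3\le k\le l$) and Cauchy–Schwarz this yields $\tfrac{1}{nh_n^2}\sum_i|\Psi_k(Z_{t_{i-1}^n})(\D_i^nZ)^k|=O(h_n^{k-2})\to0$ and $\tfrac{1}{nh_n^2}\sum_i|\rho_i^n|=O(h_n^{l-1})\to0$ in $L^1$, since $l\ge2$. Hence only the $k=1$ and $k=2$ contributions survive.

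For the quadratic term I would condition on $Z_{t_{i-1}^n}$: bivariate Gaussianity of $(Z_{t_{i-1}^n},\D_i^nZ)$ gives $\E[(\D_i^nZ)^2\mid Z_{t_{i-1}^n}]=2[K(0)-K(h_n)]+O(h_n^4)(1+Z_{t_{i-1}^n}^2)$, the $O(h_n^4)$ part being negligible after dividing by $nh_n^2$ and summing. The leading part gives $\tfrac{K(0)-K(h_n)}{h_n^2}\cdot\tfrac1n\sum_i\Psi_2(Z_{t_{i-1}^n})+o_p(1)$, which by $\tfrac{K(0)-K(h_n)}{h_n^2}\to-\tfrac12\p_t^2K(0)$ (A\ref{as:3}) and Lemma \ref{lem:ergod} converges to $-\tfrac12\p_t^2K(0)\int_\R\Psi_2(z)\phi_{K(0)}(z)\,\df z$. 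The centered remainder $\tfrac{1}{2nh_n^2}\sum_i\Psi_2(Z_{t_{i-1}^n})\big\{(\D_i^nZ)^2-\E[(\D_i^nZ)^2\mid Z_{t_{i-1}^n}]\big\}$ has mean zero; its diagonal variance is $O(n^{-1})$ (as $\E[(\D_i^nZ)^4]=O(h_n^4)$), and the off-diagonal covariances, bounded by a diagram/Mehler estimate by $h_n^4$ times the cross-correlations $|K(|i-j|h_n)|$, sum to $o(1)$ because $\tfrac1n\sum_{r\ge0}|K(rh_n)|\to0$ under the mixing hypothesis A\ref{as:2}; hence this remainder is $o_p(1)$.

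For the linear term I would perform a second, Itô-type expansion: taking $\Phi$ a $C^3$ function with $\Phi'=\Psi_1$,
\[
\Psi_1(Z_{t_{i-1}^n})\,\D_i^nZ=\big[\Phi(Z_{t_i^n})-\Phi(Z_{t_{i-1}^n})\big]-\tfrac12\Psi_1'(Z_{t_{i-1}^n})(\D_i^nZ)^2-\tfrac16\Psi_1''(\eta_i^n)(\D_i^nZ)^3,
\]
for some $\eta_i^n$ between $Z_{t_{i-1}^n}$ and $Z_{t_i^n}$. Summed and divided by $nh_n^2$, the telescoping part is $\{\Phi(Z_{t_n^n})-\Phi(Z_{t_0^n})\}/(nh_n^2)=o_p(1)$ under the sampling scheme \eqref{sampling}, the cubic remainder is $O(h_n)$, and the quadratic correction (the step above applied with $2\Psi_1'$ in place of $\Psi_2$) converges to $\tfrac12\p_t^2K(0)\int_\R\Psi_1'(z)\phi_{K(0)}(z)\,\df z$; Gaussian integration by parts, $\int z\Psi_1(z)\phi_{K(0)}(z)\,\df z=K(0)\int\Psi_1'(z)\phi_{K(0)}(z)\,\df z$, turns this into $\tfrac{\p_t^2K(0)}{2K(0)}\int_\R z\,\Psi_1(z)\phi_{K(0)}(z)\,\df z$. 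Adding the two contributions and recalling $\Psi_1(z)=\p_yG(z,z)$, $\Psi_2(z)=\p_y^2G(z,z)$ gives exactly $\tfrac{\p^2K(0)}{2K(0)}\int_\R\big[\,z\,\p_yG(z,z)-K(0)\,\p_y^2G(z,z)\,\big]\phi_{K(0)}(z)\,\df z$, the asserted limit.

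The hard part will be the fluctuation control: showing that the centered quadratic remainder, and its analogue arising in the linear term, are $o_p(1)$. This rests on a diagram/Mehler covariance estimate for local quadratic functionals of the Gaussian vectors $(Z_{t_{i-1}^n},Z_{t_i^n})$, combined with $\tfrac1n\sum_{r\ge0}|K(rh_n)|\to0$ under A\ref{as:2}; it is in the spirit of the weak-dependence arguments used in the proof of Theorem \ref{thm:M-asy-norm}. A subsidiary, essentially bookkeeping, point is to fix the joint smoothness of $G$ needed for $z\mapsto\p_yG(z,z)$ to be $C^2$ with polynomially bounded derivatives, and to confirm that the telescoping boundary term $\{\Phi(Z_{t_n^n})-\Phi(Z_{t_0^n})\}/(nh_n^2)$ is indeed negligible.
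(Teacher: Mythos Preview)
Your decomposition and the handling of the quadratic term and the higher-order remainders match the paper's argument: the paper also Taylor-expands $G$ in its second variable, bounds the cubic remainder via the uniform bound on $\p_y^3G$ together with Lemma~\ref{lem:avg-conv}, and treats the quadratic contribution by splitting off a conditional mean and using Lemma~\ref{lem:ergod}.

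The linear term $T_1$ is where you diverge, and where there is a real gap. The paper does not telescope via a primitive $\Phi$; instead it conditions on $Z_{t_{i-1}}$ and uses the explicit Gaussian regression
\[
\E[\D_i^nZ\mid Z_{t_{i-1}}]=\frac{K(h_n)-K(0)}{K(0)}\,Z_{t_{i-1}}=\frac{\p_t^2K(0)}{2K(0)}\,h_n^2\,Z_{t_{i-1}}+o(h_n^3),
\]
so that $A_n:=\tfrac{1}{nh_n^2}\sum_i\Psi_1(Z_{t_{i-1}})\,\E[\D_i^nZ\mid Z_{t_{i-1}}]$ reduces to an ordinary ergodic average and converges by Lemma~\ref{lem:ergod} directly to $\tfrac{\p_t^2K(0)}{2K(0)}\int z\,\Psi_1(z)\,\phi_{K(0)}(z)\,\df z$; the centered part $B_n$ is then controlled by Cauchy--Schwarz. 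Your telescoping route instead hinges on $\{\Phi(Z_{t_n})-\Phi(Z_{0})\}/(nh_n^2)=o_p(1)$, which you label ``bookkeeping'', but it is not: by stationarity both $\Phi(Z_{t_n})$ and $\Phi(Z_0)$ are $O_p(1)$, so the quotient is $O_p((nh_n^2)^{-1})$, and the sampling scheme \eqref{sampling} does \emph{not} force $nh_n^2\to\infty$ (for instance $h_n=n^{-2/3}$ gives $nh_n=n^{1/3}\to\infty$ but $nh_n^2=n^{-1/3}\to 0$, and your boundary term blows up). The conditioning device sidesteps this entirely, because the $h_n^2$ coming from the conditional mean cancels the $h_n^{-2}$ prefactor \emph{before} any limit is taken. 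If you want to keep your route you would have to add the hypothesis $nh_n^2\to\infty$, which is strictly stronger than what the lemma asserts.

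A secondary point: your approach also requires $\Psi_1(z)=\p_yG(z,z)$ to be $C^2$ with polynomially bounded first and second derivatives (for the Taylor expansion of $\Phi$ and the subsequent quadratic analysis), which implicitly demands smoothness of $G$ in its first variable that the lemma does not assume; the paper's conditioning argument needs only the stated hypotheses on $\p_y^kG$.
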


\begin{proof}
Note that  $Z_{t_i} = Z_{t_{i-1}} + \Delta_i^n Z$
to apply Taylor's formula of $y \mapsto G(x,y)$ around $y = x$: 
\begin{align*}
G(Z_{t_{i-1}}, Z_{t_i}) 
&= G(Z_{t_{i-1}}, Z_{t_{i-1}} + \Delta_i^n Z) \\
&= \p_y G(Z_{t_{i-1}}, Z_{t_{i-1}}) \cdot \Delta_i^n Z 
+ \frac{1}{2} \p_y^2 G(Z_{t_{i-1}}, Z_{t_{i-1}}) \cdot (\Delta_i^n Z)^2 
+ R_i,
\end{align*}
where the remainder $R_i$ is given by
\[
R_i = \frac{1}{6} \p_y^3 G(Z_{t_{i-1}}, Z_{t_{i-1}} + \theta_i \Delta_i^n Z) \cdot (\Delta_i^n Z)^3
\quad \text{for some } \theta_i \in (0,1).
\]
Now we examine each term in the average
\[
\frac{1}{n h_n^2} \sum_{i=1}^n G(Z_{t_{i-1}}, Z_{t_i}) =: T_1 + T_2 + T_3,
\]
where $T_3 = \frac{1}{n h_n^2} \sum_{i=1}^n R_i$; 
\begin{align*}
T_1 &:= \frac{1}{n h_n^2} \sum_{i=1}^n \p_y G(Z_{t_{i-1}}, Z_{t_{i-1}}) \cdot \Delta_i^n Z; \quad 
T_2 := \frac{1}{2n h_n^2} \sum_{i=1}^n \p_y^2 G(Z_{t_{i-1}}, Z_{t_{i-1}}) \cdot (\Delta_i^n Z)^2. 
\end{align*}
We decompose $T_1$ as
\begin{align*}
T_1 = &= \frac{1}{n h_n^2} \sum_{i=1}^n \p_y G(Z_{t_{i-1}}, Z_{t_{i-1}}) \cdot \E[\Delta_i^n Z | Z_{t_{i-1}}] \\
&+ \frac{1}{n h_n^2} \sum_{i=1}^n \p_y G(Z_{t_{i-1}}, Z_{t_{i-1}}) \cdot \l( \Delta_i^n Z - \E[\Delta_i^n Z|Z_{t_{i-1}}] \r)\\
&=:A_n + B_n. 
\end{align*}
Since $\E[\Delta_i^n Z | Z_{t_{i-1}} = z] = \rho_n z$ with $\rho_n = \frac{K(h_n)}{K(0)} = 1 - \frac{1}{2} \p_t^2 K(0) h_n^2 + o(h_n^3)$, 
we have
\[
\E[\Delta_i^n Z|Z_{t_{i-1}}] = -\frac{1}{2} \p_t^2 K(0) \cdot \frac{h_n^2}{K(0)} Z_{t_{i-1}} + o(h_n^3).
\]
Therefore,
\[
A_n = -\frac{\p_t^2 K(0)}{2 K(0)} \cdot \frac{1}{n} \sum_{i=1}^n \p_y G(Z_{t_{i-1}}, Z_{t_{i-1}}) \cdot Z_{t_{i-1}} + o_P(1).
\]
Since the function $f(z) := \p_y G(z,z) \cdot z$ is of polynomial growth and satisfies the condition \eqref{cond:ergod}, Lemma~\ref{lem:ergod} implies that
\[
\frac{1}{n} \sum_{i=1}^n \p_y G(Z_{t_{i-1}}, Z_{t_{i-1}}) \cdot Z_{t_{i-1}} \toP \int_{\mathbb{R}} \p_y G(z,z) \cdot z \, \phi_{K(0)}(z) \,\df z.
\]
Hence,
\[
A_n \toP -\frac{\p_t^2 K(0)}{2 K(0)} \int_{\mathbb{R}} \p_y G(z,z) \cdot z \, \phi_{K(0)}(z) \,\df z.
\]
Next, we handle the centered term $B_n$. Note that 
\[
B_n= \frac{1}{n} \sum_{i=1}^n \p_y G(Z_{t_{i-1}}, Z_{t_{i-1}}) \cdot \wt{Z}_i^n, \quad \wt{Z}_i^n = \frac{\Delta_i^n Z - \E[\Delta_i^n Z| Z_{t_{i-1}}]}{h_n^2}, 
\]
where $\E[\wt{Z}_i^n] = 0$ and $\Var(\wt{Z}_i^n) =O(h_n^2)$. According to the Schwartz inequality, we have 
\[
B_n^2 \le \l(\frac{1}{n}\sum_{i=1}^n \p_y G(Z_{t_{i-1}}, Z_{t_{i-1}})^2 \r) \l(\frac{1}{n}\sum_{i=1}^n | \wt{Z}_i^n |^2  \r) = O_p(h_n^2)\toP0,\quad n\to \infty, 
\] 
by Lemma \ref{lem:ergod}.

Thus, combining both parts, we conclude that 
\[
T_1 \toP -\frac{\p_t^2 K(0)}{2 K(0)} \int_{\mathbb{R}} \p_y G(z,z) \cdot z \, \phi_{K(0)}(z) \,\df z.
\]
As for $T_2$, the same argument leads us that 
\[
\frac{1}{2n h_n^2} \sum_{i=1}^n \p_y^2 G(Z_{t_{i-1}}, Z_{t_{i-1}}) \cdot (\Delta_i^n Z)^2
\toP \frac{\p^2 K(0)}{2} \int_{\mathbb{R}} \p_y^2 G(z,z) \, \phi_{K(0)}(z) \,\df z, 
\]
and the details are omitted. 

As for $T_3$, since $|\p_y^3 G(x,y)| \le M$ and $\E[|\Delta_i^n Z|^3] = O(h_n^3)$, it follows from Lemma \ref{lem:avg-conv} that 
\[
\l|\frac{1}{n h_n^2} \sum_{i=1}^n R_i \r| \le M \frac{h_n}{n}\sum_{i=1}^n \l(\frac{\D_i^n Z}{h_n}\r)^3=O_p(h_n) \to 0.
\]
As a result, we obtain that 
\[
\frac{1}{n h_n^2} \sum_{i=1}^n G(Z_{t_{i-1}}, Z_{t_i}) 
\toP \frac{\p^2 K(0)}{2 K(0)} \int_{\mathbb{R}} \l[ \p_y G(z,z) \cdot z - \p_y^2 G(z,z) \cdot K(0) \r] \phi_{K(0)}(z) \,\df z.
\]
\end{proof}

\section{Auxiliary Lemmas}\label{app:B}
In this section, we assume Conditions A\ref{as:2}--A\ref{as:4} without further mention.

\begin{lem}\label{lem:f-DX}
Let $f:\mathbb{R}\times \ol{\Theta} \to \mathbb{R}$, be continuous and of polynomial growth uniformly in $\th \in \ol{\Theta}$:
\begin{align}
|\p^k_x\p_\th^lf(x,\th)| \le C(1 + |x|^p), \quad  x \in \mathbb{R},  \label{cond:f-DX}
\end{align}
for integers $k$ and $l$ with $0\le k+l\le 1$, $C>0$ and $p \ge 1$, and suppose the assumption B\ref{as:mu} in the model \eqref{model}. 
Then, it holds that 
\begin{align*}
\frac{1}{n} \sum_{i=1}^n f \l(\frac{\D_i^n X - \mu_\xi(t_{i-1}) h_n}{h_n},\th\r) \toP \int_\R f(z,\th) \phi_{-\p^2_t K(0)}(z)\,\df z, 
\end{align*}
as $n\to \infty$, uniformly in $\th \in \ol{\Theta}$. 
\end{lem}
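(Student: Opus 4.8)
The plan is to reduce the claim to its drift-free version, Lemma~\ref{lem:avg-conv}, by showing that the deterministic drift affects neither the limit nor the mode of convergence. First I would substitute the model \eqref{model}, writing $\D_i^n X = \D_i^n Z + \int_{t_{i-1}}^{t_i}\mu_{\xi_0}(s)\,\df s$, so that
\[
\frac{\D_i^n X - \mu_\xi(t_{i-1})h_n}{h_n} = \frac{\D_i^n Z}{h_n} + \rho_i^n(\xi), \qquad
\rho_i^n(\xi) := \bigl(\mu_{\xi_0}(t_{i-1}) - \mu_\xi(t_{i-1})\bigr) + \frac1{h_n}\int_{t_{i-1}}^{t_i}\bigl(\mu_{\xi_0}(s) - \mu_{\xi_0}(t_{i-1})\bigr)\,\df s .
\]
By B\ref{as:mu} the integral remainder is $O(h_n)$ uniformly in $i$ and $\xi$, and since each $\mu_\xi$ is directly Riemann integrable it is bounded (Remark~\ref{rem:dri-cont}); hence $\sup_{i,\xi}|\rho_i^n(\xi)|$ is bounded.

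The core step is to establish
\[
\sup_{\th\in\ol{\Theta}}\Bigl| \frac1n\sum_{i=1}^n\Bigl\{ f\Bigl(\tfrac{\D_i^n Z}{h_n} + \rho_i^n(\xi),\th\Bigr) - f\Bigl(\tfrac{\D_i^n Z}{h_n},\th\Bigr)\Bigr\} \Bigr| \toP 0 .
\]
For this I would apply the mean value theorem together with the growth bound \eqref{cond:f-DX} in the case $k=1$: for bounded $|\delta|$ one has $|f(x+\delta,\th) - f(x,\th)| \lesssim |\delta|\,(1+|x|^p)$ uniformly in $\th$. Taking $x = \D_i^n Z/h_n$, which is $\mathcal{N}\bigl(0, 2h_n^{-2}[K(0)-K(h_n)]\bigr)$ with variance converging to $-\p_t^2 K(0)\in(0,\infty)$ by A\ref{as:3}, all its moments are bounded uniformly in $i$ and $n$, so the $L^1$-norm of the displayed average is $\lesssim \frac1n\sum_{i=1}^n \sup_{\xi}|\rho_i^n(\xi)|$. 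Writing $\frac1n\sum_{i=1}^n(\,\cdot\,) = \frac1{nh_n}\sum_{i=1}^n(\,\cdot\,)h_n$ and using that the Riemann sum $\sum_{i=1}^n|\mu_{\xi_0}(t_{i-1}) - \mu_\xi(t_{i-1})|\,h_n$ stays bounded (the integrand being DRI), this is $O\bigl((nh_n)^{-1}\bigr) + O(h_n)\to 0$; the hypothesis $nh_n\to\infty$ is essential at exactly this point.

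Once the drift correction has been absorbed, Lemma~\ref{lem:avg-conv} applies to $Z$ — its hypotheses A\ref{as:2}--A\ref{as:4} hold by assumption, and \eqref{cond:f-DX} for $l\le 1$ supplies the polynomial growth of $f$ and of $\p_\th f$ needed there — and gives
\[
\frac1n\sum_{i=1}^n f\Bigl(\tfrac{\D_i^n Z}{h_n},\th\Bigr) \toP \int_\R f(z,\th)\,\phi_{-\p_t^2 K(0)}(z)\,\df z
\]
uniformly in $\th$. Combining the two displays yields the assertion.

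The hardest part is the \emph{uniformity over $\xi$} of the negligibility step, that is, showing that $\sum_{i=1}^n \sup_{\xi\in\ol\Xi}|\mu_{\xi_0}(t_{i-1}) - \mu_\xi(t_{i-1})|\,h_n$ stays bounded. The clean way is to invoke a dominated-gradient condition of the type B\ref{as:DRI-mu3}, which gives $\sup_{\xi}|\mu_{\xi_0}(t) - \mu_\xi(t)| \le \mathrm{diam}(\ol\Xi)\,\g(t)$ with $\g$ integrable, whence $\frac1n\sum_{i=1}^n\g(t_{i-1}) = O\bigl((nh_n)^{-1}\bigr)$ uniformly in $\xi$. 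Without such a condition one must split the index set at $\{t_{i-1}\le T\}$, which contains only $O(T/h_n)=o(n)$ indices, and use on $\{t_{i-1}>T\}$ the uniform decay $\sup_{\xi}\sup_{t>T}|\mu_\xi(t)|\to 0$ as $T\to\infty$ — deriving this uniform-in-$\xi$ decay from the DRI property together with the Lipschitz bound B\ref{as:mu} (and compactness of $\ol\Xi$) is the real technical content; the remainder is routine.
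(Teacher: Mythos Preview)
Your approach is the paper's approach: decompose, bound the drift correction via the mean value theorem and the polynomial growth of $\p_x f$, then invoke Lemma~\ref{lem:avg-conv} for the drift-free sum. The paper, however, writes the correction as
\[
\widetilde Y_i^n(\xi) - Y_i^n \;=\; \frac{1}{h_n}\int_{t_{i-1}}^{t_i}\bigl(\mu_\xi(s)-\mu_\xi(t_{i-1})\bigr)\,\df s,
\]
with $\mu_\xi$ in both places, and from this concludes $|\widetilde Y_i^n(\xi) - Y_i^n| = O(h_n)$ uniformly in $i$ using only B\ref{as:mu}. That makes the correction term trivially $O(h_n)$, and no DRI argument or use of $nh_n\to\infty$ is needed at all.

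Your computation is the correct one: since $\D_i^n X - \D_i^n Z = \int_{t_{i-1}}^{t_i}\mu_{\xi_0}(s)\,\df s$, the correction contains the term $\mu_{\xi_0}(t_{i-1})-\mu_\xi(t_{i-1})$, which is only $O(1)$, and one must use $nh_n\to\infty$ together with a DRI-type summability to make $\frac{1}{n}\sum_i\sup_\xi|\rho_i^n(\xi)|$ vanish. You are right that the uniformity in $\xi$ is then the real issue and that, as the lemma is stated (only B\ref{as:mu}), it is not available; one indeed needs B\ref{as:DRI-mu} plus either B\ref{as:DRI-mu3} or the splitting argument you sketch. In short, your proof follows the same strategy as the paper's but is more careful at precisely the step where the paper slips, at the cost of tacitly importing the DRI assumptions that the paper uses throughout the rest of Section~\ref{sec:proof}.
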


\begin{proof}
First, we fix $\th\in \Theta$, and define the scaled increments:
\[
\widetilde{Y}_i^n(\xi) := \frac{\Delta_i^n X - \mu_\xi(t_{i-1})h_n }{h_n}, \qquad Y_i^n := \frac{\Delta_i^n Z}{h_n}.
\]
We have the decomposition
\begin{align*}
\frac{1}{n} \sum_{i=1}^n f(\widetilde{Y}_i^n(\xi),\th) = \frac{1}{n} \sum_{i=1}^n f(Y_{i-1}^n,\th) + \frac{1}{n} \sum_{i=1}^n \l( f(\widetilde{Y}_i^n(\xi),\th) - f(Y_{i-1}^n,\th) \r).
\end{align*}
We first prove that the second term converges to zero in probability uniformly in $\theta$.

By the mean value theorem, there exists $y_i^n$ between $\widetilde{Y}_i^n(\xi)$ and $Y_{i-1}^n$ such that
\[
|f(\widetilde{Y}_i^n(\xi),\theta) - f(Y_{i-1}^n,\theta)| = |\partial_x f(y_i^n,\theta)||\widetilde{Y}_i^n(\xi) - Y_{i-1}^n|. 
\]
Using the polynomial growth condition, we have
\[
|\partial_x f(y_i^n, \th)| \le C(1 + |y_i^n|^p) \quad \text{for some constant } C>0.
\]
Next, observe that
\begin{align*}
\widetilde{Y}_i^n(\xi) - Y_{i-1}^n &= \frac{1}{h_n} \l( \Delta_i^n X - \mu_\xi(t_{i-1}) h_n - \Delta_i^n Z \r) \\
&= \frac{1}{h_n} \l( \int_{t_{i-1}}^{t_i} \mu_\xi(s)\,\df s - \mu_\xi(t_{i-1}) h_n \r) \\
&= \frac{1}{h_n} \int_{t_{i-1}}^{t_i} (\mu_\xi(s) - \mu_\xi(t_{i-1}))\,\df s.
\end{align*}
By the continuity of $\mu_\xi$ and the mean value theorem, there exists $\eta_i^n \in [t_{i-1}, t_i]$ such that
\[
\mu_\xi(s) - \mu_\xi(t_{i-1}) = \partial_t \mu_\xi(\eta_i^n)(s - t_{i-1}),
\]
thus, by the condition B\ref{as:mu}, 
\begin{align*}
|\widetilde{Y}_i^n(\xi) - Y_{i-1}^n| &\le \frac{1}{h_n} \int_{t_{i-1}}^{t_i} |\partial_t \mu_\xi(\eta_i^n)||s - t_{i-1}|\,\df s \\
&\le \frac{\sup_{t>0,\th\in \ol{\Theta}}|\partial_t \mu_\xi(t)|}{h_n} \int_0^{h_n} u\,\df u  = O_p(h_n). 
\end{align*}
Hence, $|\widetilde{Y}_i^n(\xi) - Y_{i-1}^n| = O_p(h_n)$ uniformly in $i$.

Moreover, since $y_i^n$ lies between $Y_{i-1}^n$ and $\widetilde{Y}_i^n(\xi)$, which are Gaussian variables, it follows that $|y_i^n|$ also has a Gaussian tail. 
This implies that 
\[
\E\l[ (1 + |y_i^n|^p) |\widetilde{Y}_i^n(\xi) - Y_{i-1}^n| \r] \le C h_n, 
\]
uniformly in $i$ and $\theta$.
Therefore,
\[
\frac{1}{n} \sum_{i=1}^n |f_\theta(\widetilde{Y}_i^n(\xi)) - f_\theta(Y_{i-1}^n)| \toP 0, 
\]
for each $\th\in \Theta$. Moreover, the tightness of $\frac{1}{n} \sum_{i=1}^n |f_\theta(\widetilde{Y}_i^n(\xi)) - f_\theta(Y_{i-1}^n)| $ is also easy to see from the condition \eqref{cond:f-DX}, e.g., by the same argument as in the proof of Lemma \ref{lem:avg-conv}. 
Hence the above convergence is indeed uniform in $\theta \in \overline{\Theta}$. 
As a consequence, Lemma \ref{lem:avg-conv} completes the proof. 
\end{proof}

\begin{lem}\label{lem:score-vector}
Suppose that Assumptions B\ref{as:DRI-mu}, B\ref{as:DRI-mu2} and B\ref{as:DRI-mu3} hold. 
Then, it follows for the block diagonals $D_n := \mathrm{diag}\l( h_n^{-1/2} I_p,  \sqrt{n}I_q\r)$  and $C_n := \mathrm{diag}\l( nh_n I_p, I_q\r)$ that 
\[
C_nD_n \partial_\theta \ell_n(\theta_0)
=
\begin{pmatrix}
n\sqrt{h_n}\partial_\xi \ell_n(\theta_0) \\
\sqrt{n} \partial_\sigma \ell_n(\theta_0)
\end{pmatrix}
\toD \mathcal{N}(0, J(\theta_0)),
\]
where  
\[
J(\theta_0) := 
\begin{pmatrix}
 -\frac{2}{\partial_t^2 K(0)}\int_0^\infty \{\p_\xi \mu_{\xi}(t)\}^{\otimes 2}\, \df t &0 \\
0 & \l( \frac{1}{2} \p_\s \log (-\partial_t^2 K_{\sigma_0}(0) \r)^{\otimes 2} 
\end{pmatrix}. 
\]
\end{lem}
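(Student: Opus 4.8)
\emph{Overall strategy.}
The plan is to treat the two blocks of $\p_\theta\ell_n(\theta_0)$ separately, establish a limit theorem for each under the indicated scaling, observe that the cross-block covariance vanishes, and conclude by the Cram\'er--Wold device. Two reductions are shared by both blocks. First, A\ref{as:3} (equivalently B\ref{as:K2}) gives $v_n(\s):=K_\s(0)-K_\s(h_n)=-\tfrac12\p_t^2K_\s(0)h_n^2+o(h_n^3)$ uniformly in $\s$, with the analogous expansion for $\p_\s v_n(\s)$. Second, at the true value $\D_i^nX-\mu_{\xi_0}(t_{i-1})h_n=\D_i^nZ+r_i^n$ with $r_i^n:=\int_{t_{i-1}}^{t_i}(\mu_{\xi_0}(s)-\mu_{\xi_0}(t_{i-1}))\,\df s$, which by B\ref{as:mu} obeys $|r_i^n|\le\tfrac12h_n^2\sup_{t,\xi}|\p_t\mu_\xi(t)|$. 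A convenient structural observation is that $(\D_1^nZ,\dots,\D_n^nZ)$ is a jointly Gaussian vector, so any \emph{linear} functional of it is \emph{exactly} Gaussian; this reduces the $\xi$-block central limit theorem to the computation of a limiting covariance.

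\emph{The $\xi$-block.}
Differentiating the contrast,
\[
\p_\xi\ell_n(\theta_0)=-\frac{h_n}{n\,v_n(\s_0)}\sum_{i=1}^n\bigl(\D_i^nZ+r_i^n\bigr)\,\p_\xi\mu_{\xi_0}(t_{i-1}),
\]
so, inserting the expansion of $v_n(\s_0)$,
\[
n\sqrt{h_n}\,\p_\xi\ell_n(\theta_0)=\frac{2}{\p_t^2K(0)}\cdot\frac{1}{\sqrt{h_n}}\sum_{i=1}^n\p_\xi\mu_{\xi_0}(t_{i-1})\,\D_i^nZ+o_p(1),
\]
where the remainder absorbs the $r_i^n$-sum since B\ref{as:DRI-mu3} gives $\sum_i|r_i^n|\,|\p_\xi\mu_{\xi_0}(t_{i-1})|\lesssim h_n^2\sum_i\g(t_{i-1})=O(h_n)$. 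The leading sum is a centered Gaussian vector with covariance
\[
\Sigma_n=\frac1{h_n}\sum_{i,j=1}^n\p_\xi\mu_{\xi_0}(t_{i-1})\,\p_\xi\mu_{\xi_0}(t_{j-1})^\top\,\E[\D_i^nZ\,\D_j^nZ],
\]
so it remains to show that $\Sigma_n$ converges to the matrix which, after multiplication by $4/(\p_t^2K(0))^2$, is the $(1,1)$-block of $J(\theta_0)$. For the diagonal terms one uses $\E[(\D_i^nZ)^2]=2[K(0)-K(h_n)]=-\p_t^2K(0)h_n^2+o(h_n^3)$ together with direct Riemann integrability of $\{\p_\xi\mu_{\xi_0}\}^{\otimes2}$ (B\ref{as:DRI-mu2}), so that $h_n\sum_i\{\p_\xi\mu_{\xi_0}(t_{i-1})\}^{\otimes2}\to\int_0^\infty\{\p_\xi\mu_{\xi_0}(t)\}^{\otimes2}\,\df t$; the off-diagonal terms are handled by expressing $\E[\D_i^nZ\,\D_j^nZ]$ as the negative discrete second difference of $K$ at lag $(i-j)h_n$ and again invoking the DRI weights together with the $L^1$-domination of B\ref{as:DRI-mu3}. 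This is precisely the step that produces the nonstandard rate $h_n^{-1/2}$.

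\emph{The $\s$-block and assembly.}
Here
\[
\p_\s\ell_n(\theta_0)=\frac{\p_\s v_n(\s_0)}{v_n(\s_0)}\Bigl(1-\frac{1}{2v_n(\s_0)}\cdot\frac1n\sum_{i=1}^n(\D_i^nZ+r_i^n)^2\Bigr),
\]
with $\p_\s v_n(\s_0)/v_n(\s_0)=\p_\s\log(-\p_t^2K_{\s_0}(0))+o(h_n)$. Since $2v_n(\s_0)=\E[(\D_i^nZ)^2]$ exactly, after scaling by $\sqrt n$ the statement reduces to a central limit theorem for the standardized quadratic array $-\tfrac1{\sqrt n}\sum_i\bigl((\D_i^nZ)^2/\E[(\D_i^nZ)^2]-1\bigr)$, the $r_i^n$ cross-terms being of smaller order. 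Its limiting variance is read off from the Gaussian identity $\Var((\D_i^nZ)^2)=2(\E[(\D_i^nZ)^2])^2$ (cf.\ Corollary~\ref{cor:moment}) together with a summable bound on the lagged covariances of $\{(\D_i^nZ)^2\}$, and multiplying by $\p_\s\log(-\p_t^2K_{\s_0}(0))$ yields the $(2,2)$-block $V_1(\s_0)$. The two blocks are asymptotically independent because $\E[\D_i^nZ\,(\D_j^nZ)^2]=0$ for all $i,j$ by oddness of the Gaussian third moment, so the off-diagonal blocks of the joint asymptotic covariance vanish; applying the two one-dimensional limit theorems above to an arbitrary linear combination of the blocks and invoking Cram\'er--Wold gives $C_nD_n\p_\theta\ell_n(\theta_0)\toD\mathcal{N}(0,J(\theta_0))$.

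\emph{Main obstacle.}
The delicate point is the identification of $\lim_n\Sigma_n$ in the $\xi$-block, and in particular its off-diagonal (distinct-increment) contribution. Because $K\in C^2$ with $\p_t^2K(0)\neq0$, the increments $\D_i^nZ$ are far from uncorrelated --- the lag-one correlation tends to $1$ --- so the linear statistic cannot be treated as a sum of independent pieces; one must quantify $\E[\D_i^nZ\,\D_j^nZ]$ through the second difference of $K$ and verify that, once weighted by the directly Riemann integrable density $\p_\xi\mu_{\xi_0}$, these contributions aggregate into a finite limit. This interplay between the short-range dependence of the increments and direct Riemann integrability is the technical core of the lemma; the $\s$-block, by contrast, follows from routine fourth-moment computations for Gaussian quadratic forms once the covariance summability is in hand.
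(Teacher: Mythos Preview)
Your approach differs from the paper's in a notable way. For the $\xi$-block you exploit that the score is a \emph{linear} functional of the jointly Gaussian vector $(\D_1^nZ,\dots,\D_n^nZ)$ and is therefore exactly Gaussian for every $n$; this reduces that block to a pure covariance computation. The paper does not use this observation: it treats both blocks uniformly by applying Neumann's CLT for weakly dependent triangular arrays (Theorem~2.1 in \cite{n13}) to the Cram\'er--Wold combination $\sum_i\bigl(a^\top X_i^n+b^\top Y_i^n\,\p_\s\p_t^2K_{\s_0}(0)\bigr)$, verifying mean zero, a Lyapunov bound, and mixing-type covariance inequalities supplied by the dedicated Lemmas~\ref{lem:cov-bound-summable} and~\ref{lem:cov-bound-summable-Y}. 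Your route is lighter for the $\xi$-block; the paper's buys a single machine that handles both blocks and the joint statement at once.

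There is one genuine gap in your assembly step. Marginal convergence of each block together with $\E[\D_i^nZ\,(\D_j^nZ)^2]=0$ does not by itself yield joint convergence: the phrase ``applying the two one-dimensional limit theorems above to an arbitrary linear combination'' is where the argument breaks, since a nontrivial combination $a^\top S_\xi+b^\top S_\s$ is neither purely linear nor purely quadratic in the underlying Gaussian increments, so neither of your two devices applies to it directly. The natural patch within your framework is to note that $S_\xi$ lives in the first Wiener chaos and $S_\s$ in the second, and to invoke a Peccati--Tudor type result (marginal CLTs on fixed chaoses imply the joint CLT, with limiting cross-covariance $\lim_n\E[S_\xi S_\s^\top]=0$). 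The paper sidesteps this entirely by running a single weak-dependence CLT on the mixed array. Your identification of the off-diagonal part of $\Sigma_n$ as the crux is also well placed --- since the lag-one correlation of $\D_i^nZ$ tends to $1$, the cross terms are of the same order as the diagonal and must be accounted for; the paper's ``variance convergence'' paragraph records only $\sum_i\E[(X_i^n)^{\otimes2}]$ explicitly, so your second-difference plan for the off-diagonal contribution is genuine added work rather than a detail already covered there.
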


\begin{proof}
From the contrast function:
\[
\ell_n(\xi, \sigma) := \frac{1}{n} \sum_{i=1}^n \l\{
\frac{(\Delta_i^n X - h_n \mu_\xi(t_{i-1}))^2}{2 [K_\sigma(0) - K_\sigma(h_n)]} +
\log \l( 2 h_n^{-2} [K_\sigma(0) - K_\sigma(h_n)] \r)
\r\}, 
\]
the score vector is given by
\begin{align*}
\partial_\xi \ell_n(\xi, \sigma)
&= - \frac{h_n}{n v_n(\s) } \sum_{i=1}^n (\Delta_i^n X - h_n \mu_\xi(t_{i-1})) \cdot \partial_\xi \mu_\xi(t_{i-1}), \\
\partial_\sigma \ell_n(\xi, \sigma)
&= \sum_{i=1}^n \l( -\frac{(\Delta_i^n X - h_n \mu_\xi(t_{i-1}))^2}{2 v_n^2(\s)} + \frac{1}{v_n(\s) } \r) \cdot \frac{1}{n} \partial_\sigma v_n(\s) .
\end{align*}
where $v_n(\s) := K_\sigma(0) - K_\sigma(h_n)$.
Noticing that $\D_i^n X = \D_i^n Z + \delta_i^n$ with
\[
\delta_i^n := \int_{t_{i-1}}^{t_i} \mu_{\xi_0}(s)\, \mathrm{d}s - \mu_{\xi_0}(t_{i-1}) h_n 
= \frac{1}{2} \p_t \mu_{\xi_0}(t_{i-1}) h_n^2 + o(h_n^2) = O(h_n^2), 
\]
and 
\[
\frac{h_n}{v_n(\sigma_0)} = \frac{2}{h_n \partial_t^2 K(0)} + o(1).
\]
since $v_n(\sigma_0) = \frac{1}{2} h_n^2 \partial_t^2 K(0) + o(h_n^3)$ by A\ref{as:3}, we obtain that 
\begin{align*}
\p_\xi \ell_n(\theta_0)
&= -\l(\frac{2}{h_n \p_t^2K(0)} + o(1)\r)\Bigg[\frac{1}{n}\sum_{i=1}^n \D_i^nZ \p_\xi\mu_{\xi_0}(t_{i-1}) - \frac{1}{n}\sum_{i=1}^n\p_\xi\mu_{\xi_0}(t_{i-1}) \cdot O(h_n^2) \\
&\qquad + \frac{1}{n}\sum_{i=1}^n \mu_{\xi_0}(t_{i-1}) \p_\xi\mu_{\xi_0}(t_{i-1})h_n \Bigg] \\
&= -\frac{2}{nh_n \p_t^2K(0)} \sum_{i=1}^n \D_i^nZ \p_\xi\mu_{\xi_0}(t_{i-1}) + O_p\l(\frac{1}{n}\r). 
\end{align*}
 under the DRI condition B\ref{as:DRI-mu} and B\ref{as:DRI-mu2}.
Now multiply both sides by $n \sqrt{h_n} $:
\begin{align*}
n \sqrt{h_n} \cdot \p_\xi \ell_n(\theta_0)
&= -\frac{2}{\p_t^2K(0)} \sum_{i=1}^n \frac{1}{\sqrt{h_n}}\D_i^nZ \p_\xi\mu_{\xi_0}(t_{i-1}) + O_p\l(\sqrt{h_n}\r).
\end{align*}
Next, it follows that 
\begin{align*}
\sqrt{n}  \partial_\sigma \ell_n(\theta_0) &= \frac{\partial_\sigma v_n(\s_0)}{\sqrt{n} } \sum_{i=1}^n \l( \frac{1}{v_n(\s_0)} - \frac{1}{2 v^2_n(\s_0)} (\Delta_i^n Z)^2 \r).
\end{align*}
using $\partial_\sigma v_n(\sigma_0) = - \frac{h_n^2}{2} \partial_t \partial_\sigma K(0) + o(h_n^3)$ and the above expansion of $v_n$, we find:
\[
\frac{\partial_\sigma v_n(\sigma_0)}{v_n(\sigma_0)} = \frac{\partial_\sigma \partial_t^2 K_{\s_0}(0)}{\partial_t^2 K(0)} + o(h_n).
\]
Hence,
\[
\partial_\sigma \ell_n(\theta_0) = \l( \frac{\partial_\sigma \partial_t^2K_{\s_0}(0)}{\partial_t^2 K(0)} + o(h_n) \r) \cdot \frac{1}{n} \sum_{i=1}^n \l(1 - \frac{(\Delta_i^n Z)^2}{2 v_n(\sigma_0)}\r).
\]
Since each summand is $O_p(1)$ by Lemma \ref{lem:avg-conv}, the remainder term becomes $o_p(h_n)$. 

As a summary, letting 
\[
X_i^n := \sqrt{h_n} \Delta_i^n Z \cdot \partial_\xi \mu_{\xi_0}(t_{i-1}),
\quad
Y_i^n := \frac{1}{\sqrt{n}} \l( 1 - \frac{(\Delta_i^n Z)^2}{2 v_n(\s_0)} \r), 
\]
we have that 
\begin{align*}
n \sqrt{h_n}  \partial_\xi \ell_n(\theta_0) &= - \frac{2}{\partial_t^2 K(0)} \sum_{i=1}^n X_i^n + o_p(1), \\
\sqrt{n} \partial_\sigma \ell_n(\theta_0) &= \frac{\partial_\sigma \partial_t^2 K_{\s_0}(0)}{\partial_t^2 K(0)} \sum_{i=1}^n Y_i^n + o_p(1).
\end{align*}

We now verify the conditions of Theorem~2.1 in Neumann~\cite{n13} for the triangular arrays $\{X_i^n\}$ and $\{Y_i^n\}$ defined by:
\[
X_i^n := \frac{1}{\sqrt{h_n}}\Delta_i^n Z \cdot \partial_\xi \mu_{\xi_0}(t_{i-1}),
\quad
Y_i^n := \frac{1}{\sqrt{n}} \l( 1 - \frac{(\Delta_i^n Z)^2}{2 v_n(\sigma_0)} \r).
\]

Now, notice that 
\begin{align*}
D_n \partial_\theta \ell_n(\theta_0) 
&=\mathrm{diag}\l(- \frac{2}{\partial_t^2 K(0)}I_p,  \frac{1}{\partial_t^2 K(0)}I_q\r) 
\sum_{i=1}^n \begin{pmatrix} X_i^n \\Y_i^n\partial_\sigma \partial_t^2 K_{\s_0}(0)\end{pmatrix}  + o_p(1), 
\end{align*}
To show the weak convergence of $S_n$, we apply the Cram\'er--Wold device. 
That is, for any $(a,b)^\top \in \mathbb{R}^{p+q}$ with $a \in \mathbb{R}^p$, $b \in \mathbb{R}^q$, we denote by 
\[
S_n := \sum_{i=1}^n \l( a^\top X_i^n + b^\top Y_i^n\partial_\sigma \partial_t^2 K_{\s_0}(0) \r).
\]
and show the weak convergence of $S_n$ by applying Thereom 2.1 in Neuman \cite{n13}, the CLT for trriangular arrays. 

\paragraph{(i) Mean zero.}
Since $Z$ is a centered Gaussian process, $\E[\Delta_i^n Z] = 0$ and $\E[(\Delta_i^n Z)^2] = v_n(\sigma_0)$, we have 
$\E[X_i^n] = 0$ and $\E[Y_i^n] = 0$, and $\E[S_n]=0$. 

\paragraph{(ii) Variance convergence.}
For the $X$-component:
\begin{align*}
\sum_{i=1}^n \E[(X_i^n)^{\otimes 2}] &= h_n^{-1}\sum_{i=1}^n \E[(\Delta_i^n Z)^2] \cdot \l\{ \partial_\xi \mu_{\xi_0}(t_{i-1}) \r\}^{\otimes 2} \\
&= v_n(\sigma_0) h_n^{-1}\sum_{i=1}^n \l\{ \partial_\xi \mu_{\xi_0}(t_{i-1}) \r\}^{\otimes 2} \to \frac{1}{2} \partial_t^2 K(0) \int_0^\infty \l\{ \partial_\xi \mu_{\xi_0}(t) \r\}^{\otimes 2} dt.
\end{align*}
by the condition B\ref{as:DRI-mu2}. 
For the $Y$-component:
\begin{align*}
\sum_{i=1}^n \E[(Y_i^n)^2] &= \frac{1}{n} \sum_{i=1}^n \operatorname{Var}\l( 1 - \frac{(\Delta_i^n Z)^2}{2 v_n(\sigma_0)} \r) \\
&= \frac{1}{n} \sum_{i=1}^n \frac{1}{4} \operatorname{Var}\l( \frac{(\Delta_i^n Z)^2}{v_n(\sigma_0)} \r) = \frac{1}{4} \cdot \frac{1}{n} \cdot 2n = \frac{1}{2}.
\end{align*}
Moreover, 
\[
\E[X_i^n Y_i^n] = \frac{1}{\sqrt{n h_n}} \cdot \E\l[ \Delta_i^n Z \cdot \l( 1 - \frac{(\Delta_i^n Z)^2}{2 v_n(\sigma_0)} \r) \r] \cdot \partial_\xi \mu_{\xi_0}(t_{i-1}).
\]
Since $\E[\Delta_i^n Z] = \E[\Delta_i^n Z^3] =0$, we obtain that $\E[X_i^n Y_i^n] = 0$.
As a result, we see that 
\[
\E[(S_n)^2] \to a^\top \l(\frac{2}{\partial_t^2 K(0)}  \int_0^\infty \l\{ \partial_\xi \mu_{\xi_0}(t) \r\}^{\otimes 2}\,\df t\r) a + b^\top \frac{1}{2}\l(\partial_\sigma \partial_t^2 K_{\s_0}(0)\r)^{\otimes 2} b. 
\]

\paragraph{(i) Lyapnov condition.}
Since $X_i^n$ and $Y_i^n$ are centered Gaussian (or polynomial transformations thereof), we can compute their fourth moments explicitly:
\[
\E[|X_i^n|^4] = \l( \frac{1}{h_n} \r)^2 \cdot 3 v_n^2 \cdot \|\partial_\xi \mu(t_{i-1})\|^4 = O\l( h_n^2\r),
\]
\[
\E[|Y_i^n|^4] = \frac{1}{n^2} \cdot \E\l[ \l(1 - \frac{1}{2} \chi^2(1) \r)^4 \r] = O\l( \frac{1}{n^2} \r).
\]
Therefore,
\[
\sum_{i=1}^n \E[|S_{n,i}|^4] \le 8 \sum_{i=1}^n \l( \E[|a^\top X_i^n|^4] + \E[|b^\top Y_i^n|^4] \r) = O\l(h_n^2 +  \frac{1}{n^2} \r) \to 0.
\]
\paragraph{(ii) Mixing covariance bounds.}
We apply Lemmas~\ref{lem:cov-bound-summable} and~\ref{lem:cov-bound-summable-Y} below, which establish uniform bounds for the covariance of nonlinear functionals of the triangular arrays $\{X_i^n\}$ and $\{Y_i^n\}$ under the assumptions A\ref{as:2} and A\ref{as:3}. In particular, for any bounded measurable function $g$ with $\|g\|_\infty \le 1$, and indices $1 \le s_1 < \dots < s_u < t_1 \le n$, the inequalities
\[
\l| \Cov\l( g(X_{s_1}^n, \dots, X_{s_u}^n) X_{s_u}^n,\; X_{t_1}^n \r) \r|
\le \l( \E|X_{s_u}^n|^2 + \E|X_{t_1}^n|^2 + \frac{1}{n} \r) \cdot \theta_{r},
\]
\[
\l| \Cov\l( g(Y_{s_1}^n, \dots, Y_{s_u}^n),\; Y_{t_1}^n Y_{t_2}^n \r) \r|
\le \l( \E|Y_{t_1}^n|^2 + \E|Y_{t_2}^n|^2 + \frac{1}{n} \r) \cdot \theta_{r},
\quad \text{with } r = t_1 - s_u,
\]
hold for a summable sequence $\{\theta_r\} \in \ell^1$ depending on the covariance kernel $K$. This verifies condition (2.6) of Neumann~\cite{n13}. The summability of $\{\theta_r\}$ follows from the integrability condition $K \in L^1(\R)$ imposed in A\ref{as:3}, ensuring that the dependence decays sufficiently fast to guarantee asymptotic independence in the triangular arrays.

\end{proof}

\begin{remark}
The sequence $\{\theta_r\}_{r \in \mathbb{N}}$ defined in Lemma~\ref{lem:score-vector} and the lemmas below depends on $n$ through the mesh size $h_n$.
This dependence is admissible in the framework of Neumann \cite{n13}, as the central limit theorem for triangular arrays of weakly dependent variables requires only that the dependence coefficients (such as $\theta_r$) satisfy a uniform summability condition over $n$:
\[
\sup_{n \in \mathbb{N}} \sum_{r=1}^\infty \theta_r^{(n)} < \infty.
\]
In our case, this is ensured by the integrability condition $K \in L^1((0,\infty))$.
\end{remark}

\begin{lem}
\label{lem:cov-bound-summable}
Let $\{X_i^n\}_{1 \le i \le n}$ be the triangular array defined by
\[
X_i^n := \frac{1}{\sqrt{h_n}} \Delta_i^n Z \cdot \partial_\xi \mu_{\xi_0}(t_{i-1}),
\]
where $\Delta_i^n Z := Z_{t_i} - Z_{t_{i-1}}$ and define for $r \in \N$ the sequence
\[
\theta_r := \frac{ \displaystyle \int_0^{h_n} \!\! \int_0^{h_n} |K(r h_n + u - v)|\,\df u\,\df v }{
\displaystyle 2 \int_0^{h_n} \!\! \int_0^{h_n} |K(u - v)|\,\df u\,\df v + h_n }.
\]
Then, for any bounded measurable function $g$ with $\|g\|_\infty \le 1$, 
and any indices $1 \le s_1 < \dots < s_u < t_1 \le n$ with $r := t_1 - s_u$, it holds under B\ref{as:DRI-mu3} that
\[
\l| \Cov\l( g(X_{s_1}^n, \dots, X_{s_u}^n) X_{s_u}^n,\; X_{t_1}^n \r) \r|
\le \l( \E|X_{s_u}^n|^2 + \E|X_{t_1}^n|^2 + \frac{1}{n} \r) \cdot \theta_r,  
\]
and 
\[
\l| \Cov\l( g(X_{s_1}^n, \dots, X_{s_u}^n),\; X_{t_1}^n X_{t_2}^n \r) \r|
\le \l( \E|X_{t_1}^n|^2 + \E|X_{t_2}^n|^2 + \frac{1}{n} \r) \cdot \theta_r,  
\]
Moreover, $\sum_{r=1}^\infty \theta_r < \infty$.
\end{lem}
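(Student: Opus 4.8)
The plan is to reduce everything to the Gaussian Hilbert-space geometry of $Z$. Since $\partial_\xi\mu_{\xi_0}(t_{i-1})$ is deterministic and bounded (B\ref{as:DRI-mu3}), write $X_i^n = c_i\,\zeta_i^n$ with $c_i:=\partial_\xi\mu_{\xi_0}(t_{i-1})$ and $\zeta_i^n:=h_n^{-1/2}\Delta_i^n Z$ a centered scalar Gaussian variable lying in the Gaussian space of $Z$; the bounded constants $c_i$ then pass cleanly through every estimate, so it suffices to argue for the scalar array $\{\zeta_i^n\}$. Let $\mathcal H_W:=\mathrm{span}\{Z_{t_{s_j}},Z_{t_{s_j-1}}:1\le j\le u\}$, so that $\sigma(W)\subset\sigma(\mathcal H_W)$, and let $P$ be the $L^2(\P)$-orthogonal projection onto $\mathcal H_W$. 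Because $Z$ is Gaussian, for any $Y$ in its Gaussian space the residual $Y-PY$ is orthogonal to, hence independent of, $\sigma(\mathcal H_W)$, and in particular of $W$. Decompose $\zeta_{t_1}^n=\widehat\zeta_1+\check\zeta_1$ and $\zeta_{t_2}^n=\widehat\zeta_2+\check\zeta_2$ with $\widehat\zeta_k:=P\zeta_{t_k}^n$ and $\check\zeta_k:=\zeta_{t_k}^n-P\zeta_{t_k}^n$.

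For the first inequality, since $\zeta_{t_1}^n$ is centered and $\check\zeta_1$ is centered and independent of $(W,X_{s_u}^n)$, we get $\Cov\l(g(W)X_{s_u}^n,X_{t_1}^n\r)=c_{t_1}\,\E\l[g(W)X_{s_u}^n\,\widehat\zeta_1\r]$, whence $\|g\|_\infty\le 1$, Cauchy--Schwarz and Young's inequality give
\[
\l|\Cov\l(g(W)X_{s_u}^n,X_{t_1}^n\r)\r|
\le |c_{t_1}|\l(\E|X_{s_u}^n|^2\r)^{1/2}\l(\E\widehat\zeta_1^{\,2}\r)^{1/2}
\le \tfrac12\theta_r\,\E|X_{s_u}^n|^2+\frac{|c_{t_1}|^2}{2\theta_r}\,\E\widehat\zeta_1^{\,2}.
\]
So the whole lemma reduces to the key estimate $\E\widehat\zeta_1^{\,2}\le\theta_r^2\,\E(\zeta_{t_1}^n)^2$, i.e. to bounding by $\theta_r$ the maximal $L^2$-correlation between the future increment $\zeta_{t_1}^n$ and the past (for which it suffices to bound the projection onto the full past generated by $\{Z_u:u\le t_{s_u}\}\supset\mathcal H_W$). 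For the second inequality, assume $s_u<t_1\le t_2$, $r=t_1-s_u$, and expand $\zeta_{t_1}^n\zeta_{t_2}^n=(\widehat\zeta_1+\check\zeta_1)(\widehat\zeta_2+\check\zeta_2)$: the cross terms $\widehat\zeta_1\check\zeta_2$ and $\check\zeta_1\widehat\zeta_2$ contribute nothing (one factor is $\mathcal H_W$-measurable, the other centered and independent of $\mathcal H_W$), while $\check\zeta_1\check\zeta_2$ together with $\E[g(W)]\E[\zeta_{t_1}^n\zeta_{t_2}^n]$ cancels using $\E[\zeta_{t_1}^n\zeta_{t_2}^n]=\E[\widehat\zeta_1\widehat\zeta_2]+\E[\check\zeta_1\check\zeta_2]$, leaving $\Cov\l(g(W),X_{t_1}^nX_{t_2}^{n\top}\r)=c_{t_1}c_{t_2}^\top\,\Cov\l(g(W),\widehat\zeta_1\widehat\zeta_2\r)$; bounding this by $2|c_{t_1}||c_{t_2}|\l(\E\widehat\zeta_1^{\,2}\r)^{1/2}\l(\E\widehat\zeta_2^{\,2}\r)^{1/2}$, then applying the key estimate to $\widehat\zeta_1$, the trivial bound $\E\widehat\zeta_2^{\,2}\le\E(\zeta_{t_2}^n)^2$, and AM--GM, yields the stated form, with the additive $n^{-1}$ absorbing the lower-order endpoint contributions.

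The main obstacle is the key estimate on $\E\widehat\zeta_1^{\,2}$. I would derive it from the dual formula $\|P\zeta_{t_1}^n\|=\sup\{|\E[\zeta_{t_1}^n\eta]|:\eta\in\mathcal H_W,\ \|\eta\|_{L^2}\le 1\}$ together with $\E[Z_tZ_s]=K(|t-s|)$: any unit $\eta\in\mathcal H_W$ is a linear combination of $Z$-values at times $\le t_{s_u}$, while $\zeta_{t_1}^n$ is the renormalised increment over the block $[t_{t_1-1},t_{t_1}]$ lying a distance $\ge(r-1)h_n$ to the right of $t_{s_u}$; expanding $\E[\zeta_{t_1}^n\eta]$ and re-expressing the relevant inner products as integrals over the last past block and the future block produces precisely the ratio defining $\theta_r$ --- the numerator $\int_0^{h_n}\!\!\int_0^{h_n}|K(rh_n+u-v)|\,\df u\,\df v$ capturing the cross-block covariance at lag $r$, and the denominator $2\int_0^{h_n}\!\!\int_0^{h_n}|K(u-v)|\,\df u\,\df v+h_n$ supplying the within-block normalisation with a strictly positive slack. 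Tracking these block integrals, and the minor boundary terms, is the delicate computational step. Finally, $\sum_{r\ge1}\theta_r<\infty$ follows by interchanging sum and double integral and recognising $\sum_{r\ge1}|K(rh_n+w)|\,h_n$ as a Riemann sum converging, uniformly in $w\in(-h_n,h_n)$, to $\int_0^\infty|K(s)|\,\df s$; since each denominator is bounded below by $h_n$, this yields $\sum_{r\ge1}\theta_r\lesssim\int_0^\infty|K(s)|\,\df s<\infty$ using $K\in L^1$.
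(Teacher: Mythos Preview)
Your route via Gaussian-space projections is genuinely different from the paper's. The paper argues directly: it pulls out the deterministic factors $\partial_\xi\mu_{\xi_0}(t_{i-1})$, uses $|g|\le 1$ to get $\l|\E\l[g(\cdots)\Delta_{s_u}^n Z\,\Delta_{t_1}^n Z\r]\r|\le \E\l|\Delta_{s_u}^n Z\,\Delta_{t_1}^n Z\r|$, then bounds this product by the cross-block double integral $\int_0^{h_n}\!\int_0^{h_n}|K(rh_n+u-v)|\,\df u\,\df v$, bounds $\E|X_i^n|^2$ by the analogous on-diagonal integral, and recognises the resulting ratio as $\theta_r$. No Hilbert-space decomposition, no maximal-correlation argument --- just Cauchy--Schwarz and a direct matching of numerator and denominator. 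The summability of $\theta_r$ is obtained exactly as you propose, via a Riemann-sum comparison with $\int_0^\infty|K|$.

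There is, however, a real gap in your reduction. Your ``key estimate'' $\E\widehat\zeta_1^{\,2}\le\theta_r^2\,\E(\zeta_{t_1}^n)^2$ asserts that the maximal $L^2$-correlation between the future increment $\zeta_{t_1}^n$ and the past is at most $\theta_r$. But with the denominator $2\int_0^{h_n}\!\int_0^{h_n}|K(u-v)|\,\df u\,\df v+h_n\sim h_n$ and numerator $\sim h_n^2|K(rh_n)|$, one has $\theta_r\sim h_n|K(rh_n)|\to 0$ as $h_n\to 0$ for each fixed $r$; yet already the single correlation between the adjacent-block increments $\zeta_{t_1}^n$ and $\zeta_{s_u}^n$ equals
\[
\frac{2K(rh_n)-K((r{+}1)h_n)-K((r{-}1)h_n)}{2\l(K(0)-K(h_n)\r)}
\longrightarrow \frac{\partial_t^2 K(rh_n)}{\partial_t^2 K(0)}
\]
under A\ref{as:3}, which stays bounded away from $0$. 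Hence $\|P\zeta_{t_1}^n\|/\|\zeta_{t_1}^n\|$ cannot be $\le\theta_r$, and the step where you ``re-express the relevant inner products as integrals over the last past block and the future block'' to recover precisely $\theta_r$ breaks down: increments $\Delta_i^n Z$ are differences of point values of $Z$, not block integrals, so their covariances are second differences of $K$, not the double integrals of $|K|$ that define $\theta_r$. Your projection machinery would deliver a bound with a correlation-type coefficient (essentially $\partial_t^2K(rh_n)/\partial_t^2K(0)$) in place of $\theta_r$, but not the specific $\theta_r$ of the lemma; to land on the stated inequality you need the paper's more elementary bookkeeping that bounds the covariance itself by the numerator of $\theta_r$ and the variances by its denominator, rather than passing through the maximal-correlation.
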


\begin{proof}
First, note that
\[
X_i^n = \frac{1}{\sqrt{h_n}} \cdot \partial_\xi \mu_{\xi_0}(t_{i-1}) \cdot \Delta_i^n Z,
\]
so that
\[
\Cov\l( g(\cdots) X_{s_u}^n,\; X_{t_1}^n \r)
= \frac{1}{h_n} \cdot \partial_\xi \mu_{\xi_0}(t_{s_u - 1}) \cdot \partial_\xi \mu_{\xi_0}(t_{t_1 - 1}) \cdot \E\l[ g(\cdots) \Delta_{s_u}^n Z \cdot \Delta_{t_1}^n Z \r].
\]
Since $|g| \le 1$, it follows from the Cauchy-Schwarz inequality that
\[
\l| \E\l[ g(\cdots) \Delta_{s_u}^n Z \cdot \Delta_{t_1}^n Z \r] \r|
\le \E\l| \Delta_{s_u}^n Z \cdot \Delta_{t_1}^n Z \r|
\le \int_0^{h_n} \int_0^{h_n} |K(r h_n + u - v)|\,\df u\,\df v.
\]
Similarly, the variance terms can be bounded as
\[
\E|X_i^n|^2 \le \frac{C_\mu^2}{h_n} \cdot \int_0^{h_n} \int_0^{h_n} |K(u - v)|\,\df u\,\df v, 
\]
where $C_\mu$ is a constant such that $|\g(t)|\le C_\mu$ in the assumption B\ref{as:DRI-mu3}. 
Therefore, the desired inequality holds with $\theta_r$ as defined.

To prove the summability of $\{\theta_r\}$, we use the change of variables $x = r h_n + s$ with $s \in [-h_n, h_n]$, and estimate
\[
\sum_{r=1}^\infty \theta_r
\le C \sum_{r=1}^\infty \int_0^{h_n} \int_0^{h_n} |K(r h_n + u - v)|\,\df u\,\df v
\le C' \int_0^\infty |K(x)|\,\df x < \infty,
\]
where $C, C'$ are constants independent of $r$. 

Moreover, since 
\begin{align*}
\l| \Cov\l( g(\cdots),\; X_{t_1}^n X_{t_2}^n \r) \r|
&\le \E\l| g(\cdots) \cdot X_{t_1}^n X_{t_2}^n \r| \le \E\l| X_{t_1}^n X_{t_2}^n \r| \\
&= \frac{1}{n h_n} \cdot |\partial_\xi \mu_{\xi_0}(t_{t_1 - 1})| \cdot |\partial_\xi \mu_{\xi_0}(t_{t_2 - 1})| \cdot \E\l| \Delta_{t_1}^n Z \cdot \Delta_{t_2}^n Z \r|.
\end{align*}
the similar argument leads us to the consequence. 
\end{proof}

\begin{lem}
\label{lem:cov-bound-summable-Y}
Let $\{Y_i^n\}_{1 \le i \le n}$ be the triangular array defined by
\[
Y_i^n := \frac{1}{\sqrt{n}} \l( 1 - \frac{(\Delta_i^n Z)^2}{2 v_n(\sigma_0)} \r),
\]
Define for $r \in \N$ the sequence
\[
\theta_r := \frac{ \displaystyle \int_0^{h_n} \!\! \int_0^{h_n} |K(r h_n + u - v)|\,\df u\,\df v }{
\displaystyle 2 \int_0^{h_n} \!\! \int_0^{h_n} |K(u - v)|\,\df u\,\df v + h_n }.
\]
Then, for any bounded measurable function $g$ with $\|g\|_\infty \le 1$, and any indices $1 \le s_1 < \dots < s_u < t_1 \le n$ with $r := t_1 - s_u$, it holds that
\[
\l| \Cov\l( g(Y_{s_1}^n, \dots, Y_{s_u}^n) Y_{s_u}^n,\; Y_{t_1}^n \r) \r|
\le \l( \E|Y_{s_u}^n|^2 + \E|Y_{t_1}^n|^2 + \frac{1}{n} \r) \cdot \theta_r,
\]
and
\[
\l| \Cov\l( g(Y_{s_1}^n, \dots, Y_{s_u}^n),\; Y_{t_1}^n Y_{t_2}^n \r) \r|
\le \l( \E|Y_{t_1}^n|^2 + \E|Y_{t_2}^n|^2 + \frac{1}{n} \r) \cdot \theta_r.
\]
Moreover, $\sum_{r=1}^\infty \theta_r < \infty$.
\end{lem}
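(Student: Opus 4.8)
The plan is to follow the proof of Lemma~\ref{lem:cov-bound-summable}, the only structural change being that each $Y_i^n$ is now a \emph{quadratic} functional of the Gaussian increments rather than a linear one. Writing $W_i:=\Delta_i^nZ$, the vector $(W_1,\dots,W_n)$ is centered Gaussian and, since $\E[W_i^2]=2[K(0)-K(h_n)]=2v_n(\sigma_0)$, one has $Y_i^n=-\tfrac{1}{2\sqrt n\,v_n(\sigma_0)}\bigl(W_i^2-\E[W_i^2]\bigr)$, a centered element of the second Wiener chaos of $W$; consequently $Y_{s_u}^n$, $Y_{t_1}^n$ and $Y_{t_1}^nY_{t_2}^n$ are polynomials in the $W_i$'s of degree $\le4$ with coefficients that are explicit powers of $(\sqrt n\,v_n(\sigma_0))^{-1}$, while $g(Y_{s_1}^n,\dots,Y_{s_u}^n)$ is a bounded measurable function of $(W_{s_1},\dots,W_{s_u})$. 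Because $g$ is only bounded and measurable, the derivative-based Gaussian interpolation used for smooth integrands is unavailable, so I would replace it by the tower property: with $g_\ast(W_P)$ the relevant factor measurable with respect to the past block $P:=\{s_1,\dots,s_u\}$ and $\psi(W_F)$ the polynomial measurable with respect to the future block $F$ ($F=\{t_1\}$ or $\{t_1,t_2\}$), and $r:=t_1-s_u$,
\[
\bigl|\Cov\bigl(g_\ast(W_P),\psi(W_F)\bigr)\bigr|\le\bigl\|g_\ast-\E g_\ast\bigr\|_{L^2}\cdot\bigl\|\E[\psi(W_F)\mid W_P]-\E\psi(W_F)\bigr\|_{L^2}.
\]

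Conditionally on $W_P$ the vector $W_F$ is Gaussian with mean $\hat m:=\Sigma_{FP}\Sigma_{PP}^{-1}W_P$ and a deterministic conditional covariance, so $\E[\psi(W_F)\mid W_P]=H(\hat m)$ with $H(m):=\E[\psi(m+G)]$ for the appropriate Gaussian $G$. The crucial observation is that $\psi$ is even in each argument (a polynomial in the $W_{t_k}^2$), hence $H$ is even, its linear term at $0$ vanishes, and $|H(m)-H(0)|\lesssim n^{-1/2}v_n(\sigma_0)^{-1}|m|^2\bigl(1+v_n(\sigma_0)^{-1}|m|^2\bigr)$; by Gaussian hypercontractivity this gives $\bigl\|\E[\psi(W_F)\mid W_P]-\E\psi(W_F)\bigr\|_{L^2}\lesssim n^{-1/2}v_n(\sigma_0)^{-1}\E[|\hat m|^2]$ up to higher-order terms, and $\E[|\hat m|^2]\le\lambda_{\min}(\Sigma_{PP})^{-1}\sum_{s\in P,\,t\in F}(\E[W_sW_t])^2$. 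From here one reproduces the two estimates of Lemma~\ref{lem:cov-bound-summable}: for $s\in P$, $t\in F$ with $t-s\ge r$, the substitution $u=t_{s-1}+u'$, $v=t_{t-1}+v'$ together with the tail control afforded by A\ref{as:2} and A\ref{as:4} gives $|\E[W_sW_t]|\le\int_0^{h_n}\!\!\int_0^{h_n}|K(rh_n+u-v)|\,\df u\,\df v$, so that summing over $P$ telescopes against $\int_0^\infty|K|$ and produces exactly one factor equal to the numerator of $\theta_r$; a matching lower bound for $\lambda_{\min}(\Sigma_{PP})$ of the order of the denominator of $\theta_r$ then contributes the denominator. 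Recombining the powers of $n$, $h_n$ and $v_n(\sigma_0)\asymp h_n^2$, using $\|g_\ast-\E g_\ast\|_{L^2}^2\le\E|Y_{s_u}^n|^2$ (or $\le1$) and $\E|Y_{t_k}^n|^2=2/n$, and applying Cauchy--Schwarz/AM--GM to pass between $\|\cdot\|_{L^2}$ and $\E|\cdot|^2$, yields the two asserted inequalities, with the residual $n^{-1}$ on the right-hand side absorbing the higher-order contributions from the Taylor expansion of $H$ at $0$. The summability $\sum_r\theta_r<\infty$ is verified exactly as at the end of the proof of Lemma~\ref{lem:cov-bound-summable}: writing $x=rh_n+s$ with $s\in[-h_n,h_n]$ and using $K\in L^1((0,\infty))$, the sum is dominated by $C\int_0^\infty|K(x)|\,\df x$.

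I expect the main obstacle to be the $h_n$-bookkeeping in the middle step: one must check that the \emph{quadratic} dependence of $H(\hat m)-H(0)$ on $\hat m$ gains precisely one power of $v_n(\sigma_0)\asymp h_n^2$, so that all explicit powers of $n$ and $h_n$ cancel and the bound closes with the stated $\theta_r$. A secondary difficulty is that $g$ is merely bounded and measurable, which is what forces the $L^2$/tower estimate in place of the cleaner interpolation identity; and the lower bound on $\lambda_{\min}(\Sigma_{PP})$ is the point at which the decay Assumptions A\ref{as:2} and A\ref{as:4} on $K$ and $\partial_tK$ are genuinely used.
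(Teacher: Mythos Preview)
Your route is genuinely different from the paper's and considerably more elaborate. The paper argues in two lines: since $\E[Y_{t_1}^n]=0$ and $\|g\|_\infty\le1$, one has $|\Cov(g(\cdots)Y_{s_u}^n,Y_{t_1}^n)|=|\E[g(\cdots)Y_{s_u}^nY_{t_1}^n]|\le\E|Y_{s_u}^nY_{t_1}^n|$, then expands the product and controls the cross term $\E[(\Delta_{s_u}^nZ)^2(\Delta_{t_1}^nZ)^2]$ by the numerator of $\theta_r$ via the Gaussian fourth-moment (Isserlis) identity; the second inequality is treated identically. There is no conditioning, no Wiener chaos, no hypercontractivity, and crucially no inverse covariance matrix $\Sigma_{PP}^{-1}$.

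Your plan has a real gap at the $\lambda_{\min}(\Sigma_{PP})$ step. Under A\ref{as:3} the process $Z$ is mean-square differentiable, so adjacent increments $W_i=\Delta_i^nZ\approx h_n\dot Z_{t_{i-1}}$ are \emph{highly} correlated: a direct Taylor computation gives $\Cov(W_i,W_{i+1})=2K(h_n)-K(0)-K(2h_n)=-K''(0)h_n^2+O(h_n^4)$, which matches the diagonal entry $\Var(W_i)=-K''(0)h_n^2+O(h_n^4)$ to leading order. Hence $\Sigma_{PP}$ is nearly singular as $h_n\to0$, and $\lambda_{\min}(\Sigma_{PP})$ is \emph{not} of the order of the denominator of $\theta_r$ (which is $\asymp h_n$) nor even of the diagonal $\asymp h_n^2$; it is strictly smaller and depends on the size $u$ of the past block. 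The bound $\E[|\hat m|^2]\le\lambda_{\min}(\Sigma_{PP})^{-1}\sum(\E[W_sW_t])^2$ then blows up, and the $h_n$-bookkeeping you were worried about does not close. The paper's argument avoids this entirely by never inverting $\Sigma_{PP}$: the factor $g$ is simply bounded out, and all that remains is a pairwise moment of two increments at lag $r$.
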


\begin{proof}
We first observe that $Y_i^n$ can be written as
\[
Y_i^n = \frac{1}{\sqrt{n}} \l( 1 - \frac{(\Delta_i^n Z)^2}{2 v_n(\sigma_0)} \r),
\]
so that
\[
\E|Y_i^n|^2 = \frac{1}{n} \cdot \Var\l( \frac{(\Delta_i^n Z)^2}{2 v_n(\sigma_0)} \r)
\le \frac{C}{n} \cdot \Var\l( (\Delta_i^n Z)^2 \r),
\]
where the last variance is controlled by the fourth moment of $\Delta_i^n Z$. Since $Z$ is a centered stationary Gaussian process, we obtain
\[
\E[(\Delta_i^n Z)^4] \le C \cdot \l( \int_0^{h_n} \!\! \int_0^{h_n} |K(u - v)|\,\df u\,\df v \r)^2,
\]
and hence
\[
\E|Y_i^n|^2 \le \frac{C'}{n} \cdot \int_0^{h_n} \!\! \int_0^{h_n} |K(u - v)|\,\df u\,\df v.
\]
This upper bound motivates the choice of denominator in $\theta_r$ so that the quantity $\E|Y_i^n|^2$ is uniformly absorbed.
Now, for $\|g\|_\infty \le 1$, we apply the Cauchy--Schwarz inequality:
\[
\l| \Cov\l( g(\cdots) Y_{s_u}^n,\; Y_{t_1}^n \r) \r|
\le \E\l| Y_{s_u}^n Y_{t_1}^n \r|.
\]
Using the above representation, we have
\[
\E\l| Y_{s_u}^n Y_{t_1}^n \r|
= \frac{1}{n} \cdot \E\l| \l( 1 - \frac{(\Delta_{s_u}^n Z)^2}{2 v_n(\sigma_0)} \r)
\l( 1 - \frac{(\Delta_{t_1}^n Z)^2}{2 v_n(\sigma_0)} \r) \r|.
\]
The dominant term arises from the covariance between $(\Delta_{s_u}^n Z)^2$ and $(\Delta_{t_1}^n Z)^2$, which is controlled by
\[
\E\l| (\Delta_{s_u}^n Z)^2 \cdot (\Delta_{t_1}^n Z)^2 \r|
\le C \cdot \int_0^{h_n} \!\! \int_0^{h_n} |K(r h_n + u - v)|\,\df u\,\df v.
\]
Therefore, the bound with $\theta_r$ holds.
The same reasoning applies to
\[
\l| \Cov\l( g(\cdots),\; Y_{t_1}^n Y_{t_2}^n \r) \r|
\le \E\l| Y_{t_1}^n Y_{t_2}^n \r|,
\]
which is again controlled by the same integral involving $K(r h_n + u - v)$. Finally, since
\[
\sum_{r=1}^\infty \theta_r \le C \int_0^\infty |K(x)|\,\df x < \infty. 
\]
Hence, the lemma is proved.
\end{proof}

\begin{lem}\label{lem:fY}
Let $f(x,\th): \R\times \ol{\Theta} \to \R$ be a measureble function such that 
\begin{align}
\sup_{\th \in \ol{\Theta}}|\p_x^k \p_\th^l f(x,\th)| \lesssim 1 + |x|^C, \label{cond:ergod2}
\end{align}
for integers $k$ and $l$ with $0\le k+l\le 1$. 
Then, it holds under the assumption B\ref{as:DRI-mu3} that
\[
\sup_{\th \in \ol{\Theta}}\l|\frac{1}{n} \sum_{i=1}^n f(Y_{i-1}^n,\th) - \int_\R f(z,\th)\phi_{K(0)}(z)\,\df z\r|\toP 0,
\]
under $h_n \to 0$ and $nh_n\to \infty$ as $n\to \infty$, where $Y_i^n= X_{t_i} - \int_0^{t_i} \mu_{\wh{\xi}_n}(s)\,\df s$ and 
$\wh{\xi}_n$ is a consistent estimator for $\xi_0$: $\wh{\xi}_n\toP \xi_0$. 
\end{lem}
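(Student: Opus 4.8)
The plan is to reduce the statement to the already-proven uniform ergodic average Lemma~\ref{lem:ergod} by stripping off the random detrending error. Using the model \eqref{model} with $\mu_{\xi_0}\equiv\mu$, write
\[
Y_i^n = X_{t_i} - \int_0^{t_i}\mu_{\wh\xi_n}(s)\,\df s = Z_{t_i} + \delta_i^n,\qquad \delta_i^n := \int_0^{t_i}\bigl(\mu_{\xi_0}(s)-\mu_{\wh\xi_n}(s)\bigr)\,\df s .
\]
The first step is to control $\sup_{i\le n}|\delta_i^n|$. Since $\xi_0$ is interior to $\Xi$ and $\wh\xi_n\toP\xi_0$, the segment $[\xi_0,\wh\xi_n]$ lies in $\Xi$ on an event of probability tending to one, so by the mean value theorem and B\ref{as:DRI-mu3} (with the integrable dominating function $\g$),
\[
\sup_{i\le n}|\delta_i^n| \le |\wh\xi_n-\xi_0|\int_0^\infty \g(s)\,\df s = |\wh\xi_n-\xi_0|\,\|\g\|_{L^1([0,\infty))} = o_P(1).
\]
I expect this to be the only delicate point: because $t_i=ih_n$ and $nh_n\to\infty$, the integrals defining $\delta_i^n$ are taken over intervals growing to infinity, and it is precisely the $L^1$-domination in B\ref{as:DRI-mu3} that keeps $\delta_i^n$ uniformly small; mere consistency of $\wh\xi_n$ would not suffice without it.

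Next I would compare the two empirical averages. By the mean value theorem in the first argument of $f$ together with the growth bound \eqref{cond:ergod2} (which is uniform in $\th$), any intermediate point between $Y_{i-1}^n$ and $Z_{t_{i-1}}$ is dominated in absolute value by $|Z_{t_{i-1}}|+|\delta_{i-1}^n|$, so
\[
\sup_{\th\in\ol\Theta}\frac1n\sum_{i=1}^n\bigl|f(Y_{i-1}^n,\th)-f(Z_{t_{i-1}},\th)\bigr|
\lesssim \Bigl(\sup_{i\le n}|\delta_i^n|\Bigr)\cdot\frac1n\sum_{i=1}^n\bigl(1+|Z_{t_{i-1}}|^C+|\delta_{i-1}^n|^C\bigr).
\]
The average on the right is $O_P(1)$ by Lemma~\ref{lem:ergod} applied to $z\mapsto 1+|z|^C$, and $\sup_{i\le n}|\delta_i^n|^C=o_P(1)$ by the previous step, hence the whole right-hand side is $o_P(1)$.

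Finally, Lemma~\ref{lem:ergod} applied directly to $f$ (its hypotheses hold under the standing assumptions of Appendix~\ref{app:B}, and \eqref{cond:ergod2} coincides with \eqref{cond:ergod}) gives
\[
\sup_{\th\in\ol\Theta}\Bigl|\frac1n\sum_{i=1}^n f(Z_{t_{i-1}},\th)-\int_\R f(z,\th)\phi_{K(0)}(z)\,\df z\Bigr|\toP 0,
\]
and a triangle inequality combining this with the preceding display yields the claim. Note that no separate tightness or equicontinuity argument is needed here, since the approximation bound above is already uniform in $\th$ and the uniformity of the limit is inherited from Lemma~\ref{lem:ergod}; the substantive content is entirely in the uniform-in-$i$ estimate of $\delta_i^n$.
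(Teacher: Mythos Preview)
Your proposal is correct and follows essentially the same approach as the paper's own proof: both bound $\sup_i|Y_{i-1}^n-Z_{t_{i-1}}|$ via the mean value theorem and the $L^1$-domination B\ref{as:DRI-mu3}, then control the difference of the empirical averages by the growth condition \eqref{cond:ergod2}, and finish with Lemma~\ref{lem:ergod} and the triangle inequality. Your write-up is in fact slightly more careful in justifying why the average $\frac{1}{n}\sum_i(1+|Z_{t_{i-1}}|^C)$ is $O_P(1)$ and in noting that uniformity in $\th$ is automatic from the bound.
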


\begin{proof}
By the mean value theorem and B\ref{as:DRI-mu3}, we have
\[
\l| \mu_{\xi_0}(s) - \mu_{\widehat{\xi}_n}(s) \r|
\le \sup_{\xi \in \Xi} |\partial_\xi \mu_\xi(s)| \cdot |\widehat{\xi}_n - \xi_0|
\le \g(s) \cdot |\widehat{\xi}_n - \xi_0|,
\]
for each $s \ge 0$. Hence,
\[
\sup_{i \le n} |Y_{i-1}^n - Z_{t_{i-1}}|
\le |\widehat{\xi}_n - \xi_0| \cdot \int_0^{t_n} \g(s)\,\df s
\le |\widehat{\xi}_n - \xi_0| \cdot \|\g\|_{L^1([0,\infty))} \toP 0.
\]

Now, using the growth condition~\eqref{cond:ergod2} and the mean value theorem again, we obtain
\[
|f(Y_{i-1}^n,\theta) - f(Z_{t_{i-1}},\theta)|
\lesssim |Y_{i-1}^n - Z_{t_{i-1}}| \cdot \l(1 + |Z_{t_{i-1}}|^C + |Y_{i-1}^n|^C\r).
\]
Since $Z_{t_{i-1}}$ has finite moments and $Y_{i-1}^n \toP Z_{t_{i-1}}$ uniformly in $i$, it follows by dominated convergence that
\[
\sup_{\theta \in \overline{\Theta}} \l| \frac{1}{n} \sum_{i=1}^n f(Y_{i-1}^n,\theta) - \frac{1}{n} \sum_{i=1}^n f(Z_{t_{i-1}},\theta) \r| \toP 0.
\]

Finally, Lemma~\ref{lem:ergod} implies
\[
\sup_{\theta \in \overline{\Theta}} \l| \frac{1}{n} \sum_{i=1}^n f(Z_{t_{i-1}},\theta) - \int f(z,\theta)\,\phi_{K(0)}(z)\,\df z \r| \toP 0,
\]
and the claim follows by the triangle inequality.
\end{proof}

\begin{lem}\label{lem:GY}
Let $G(x,y,\th) :\R^2\times \Theta\to \R$ be a function such that $G(x,\cdot,\th)\in C^2(\R)$ for each $x\in \R$ and $\th\in \Theta$.
Suppose that, for each $x\in \R$ and $k=1,2,\dots, l \ (l\ge 2)$, $\p_y^kG(x,y,\th)$ is of polynomial growth w.r.t. $y$ uniformly in $\th\in \Theta$, $G(x,x,\th) = 0$, and that  there exists a constant $M>0$ such that $|\p_y^{l+1}G(x,y,\th)| \le M$ for all $x,y\in \R$ and $\th \in \Theta$. Then it holds that 
\begin{align*}
\frac{1}{nh_n^2}\sum_{i=1}^n G(Y^n_{i-1}, Y_i^n,\th)
&\toP  \frac{\p^2 K(0)}{2 K(0)}  \int_\R \l[\p_y G(z,z)\,z - \p_y^2 G(z, z)K(0)\r] \, \phi_{K(0)}(z)\,\df z,
\end{align*}
uniformly in $\th \in \ol{\Theta}$ under $h_n\to 0$ as $n\to \infty$, where $Y_i^n= X_{t_i} - \int_0^{t_i} \mu_{\wh{\xi}_n}(s)\,\df s$ and 
$\wh{\xi}_n$ is a consistent estimator for $\xi_0$: $\wh{\xi}_n\toP \xi_0$. 
\end{lem}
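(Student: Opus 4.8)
The plan is to reduce the statement to Lemma~\ref{lem:GZ} by writing the de-trended process as a small perturbation of $Z$. Put $\delta_i^n:=\int_0^{t_i}\{\mu_{\xi_0}(s)-\mu_{\wh{\xi}_n}(s)\}\,\df s$, so that $Y_i^n=Z_{t_i}+\delta_i^n$. By B\ref{as:DRI-mu3} and the mean value theorem, $|\mu_{\xi_0}(s)-\mu_{\wh{\xi}_n}(s)|\le\g(s)\,|\wh{\xi}_n-\xi_0|$, which yields the estimates used repeatedly below: $\sup_{1\le i\le n}|\delta_i^n|\le\|\g\|_{L^1([0,\infty))}\,|\wh{\xi}_n-\xi_0|\toP 0$, the total-variation bound $\sum_{i=1}^n|\delta_i^n-\delta_{i-1}^n|\le\|\g\|_{L^1([0,\infty))}\,|\wh{\xi}_n-\xi_0|\toP 0$, and $\sup_i|\delta_i^n-\delta_{i-1}^n|\le\|\g\|_\infty h_n\,|\wh{\xi}_n-\xi_0|$.

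I would then mimic the proof of Lemma~\ref{lem:GZ}, now with $Y_i^n=Z_{t_i}+\delta_i^n$ in place of $Z_{t_i}$: Taylor-expand $y\mapsto G(Y_{i-1}^n,y,\th)$ about $y=Y_{i-1}^n$ and split $\frac{1}{nh_n^2}\sum_i G(Y_{i-1}^n,Y_i^n,\th)=T_1^Y+T_2^Y+T_3^Y$ as there. The remainder $T_3^Y$ is $O_p(h_n)$ from $|\p_y^{l+1}G|\le M$ and $\frac1n\sum_i|\Delta_i^n Y/h_n|^{l+1}=O_p(1)$, the latter from $\Delta_i^n Y=\Delta_i^n Z+(\delta_i^n-\delta_{i-1}^n)$, Corollary~\ref{cor:moment} and $\sup_i|\delta_i^n-\delta_{i-1}^n|/h_n=O_p(|\wh{\xi}_n-\xi_0|)$. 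In $T_2^Y$ I expand $(\Delta_i^n Y)^2=(\Delta_i^n Z)^2+2\,\Delta_i^n Z\,(\delta_i^n-\delta_{i-1}^n)+(\delta_i^n-\delta_{i-1}^n)^2$: the $(\Delta_i^n Z)^2$ piece, after replacing the coefficient $\p_y^2 G(Y_{i-1}^n,Y_{i-1}^n,\th)$ by $\p_y^2 G(Z_{t_{i-1}},Z_{t_{i-1}},\th)$ up to an $o_p(1)$ error via $\sup_i|\delta_i^n|\toP0$, polynomial growth and Lemma~\ref{lem:fY}, reproduces the ``$-\p_y^2 G(z,z)K(0)$'' contribution exactly as in Lemma~\ref{lem:GZ}, while the cross and square pieces are $o_p(1)$ by Cauchy--Schwarz, using $\frac{1}{nh_n^2}\sum_i(\Delta_i^n Z)^2=O_p(1)$ and $\frac{1}{nh_n^2}\sum_i(\delta_i^n-\delta_{i-1}^n)^2=O_p(|\wh{\xi}_n-\xi_0|^2)=o_p(1)$. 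In $T_1^Y$ the $\Delta_i^n Z$ part is treated by the conditional-mean/centered decomposition $\Delta_i^n Z=\l(\tfrac{K(h_n)}{K(0)}-1\r)Z_{t_{i-1}}+\l(\Delta_i^n Z-\E[\Delta_i^n Z\mid Z_{t_{i-1}}]\r)$ used in the proof of Lemma~\ref{lem:GZ}; writing $Z_{t_{i-1}}=Y_{i-1}^n-\delta_{i-1}^n$ and applying Lemma~\ref{lem:fY} with $f(z,\th)=\p_y G(z,z,\th)\,z$ makes the conditional-mean part converge, uniformly in $\th$, to $-\tfrac{\p_t^2 K(0)}{2K(0)}\int_\R\p_y G(z,z,\th)\,z\,\phi_{K(0)}(z)\,\df z$, and the centered part is $o_p(1)$ as in Lemma~\ref{lem:GZ}. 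Assembling the pieces gives the claimed limit, provided the genuinely new term $\frac{1}{nh_n^2}\sum_i\p_y G(Y_{i-1}^n,Y_{i-1}^n,\th)(\delta_i^n-\delta_{i-1}^n)$ is negligible.

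That last term --- together with the structurally identical coefficient-perturbation remainders hidden in $T_1^Y$ and $T_2^Y$ --- is where I expect the main difficulty: the prefactor $1/(nh_n^2)$ amplifies the perturbation by $h_n^{-2}$, so the crude bound $\sup_i|\delta_i^n|=o_p(1)$ does not suffice. Rather than bounding $|\p_y G(\cdots)|$ pointwise I would use summation by parts,
\begin{align*}
\sum_{i=1}^n\p_y G(Y_{i-1}^n,Y_{i-1}^n,\th)(\delta_i^n-\delta_{i-1}^n)
&=\p_y G(Y_{n-1}^n,Y_{n-1}^n,\th)\,\delta_n^n\\
&\quad-\sum_{i=1}^{n-1}\l\{\p_y G(Y_i^n,Y_i^n,\th)-\p_y G(Y_{i-1}^n,Y_{i-1}^n,\th)\r\}\delta_i^n,
\end{align*}
and then use $|\delta_i^n|\le\|\g\|_{L^1}|\wh{\xi}_n-\xi_0|$ uniformly, together with $\sum_i|\p_y G(Y_i^n,Y_i^n,\th)-\p_y G(Y_{i-1}^n,Y_{i-1}^n,\th)|\lesssim\sum_i|\Delta_i^n Y|(1+|Y_{i-1}^n|^{C'}+|Y_i^n|^{C'})=O_p(nh_n)$ --- which follows from $|\Delta_i^n Y|\le|\Delta_i^n Z|+\|\g\|_\infty h_n$ and the ergodicity of $|\dot{Z}|$ tested against polynomial weights (Lemma~\ref{lem:diff-L2}) --- and the weak dependence of the mean-square derivative $\dot{Z}$ with $K\in L^1$ to improve the worst-case bound $|\wh{\xi}_n-\xi_0|/h_n$ to the genuine order $|\wh{\xi}_n-\xi_0|\,(nh_n^2)^{-1/2}$, which is $o_p(1)$ for the drift estimators used (the least-squares drift estimator of rate $h_n^{-1/2}$, or a moment estimator of rate $\sqrt n$) under the sampling adopted. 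Uniformity in $\th\in\ol{\Theta}$ is maintained throughout by the tightness criterion $\sup_n\E\,\sup_{\th\in\ol{\Theta}}|\p_\th(\cdot)|<\infty$ already used in Lemmas~\ref{lem:avg-conv} and~\ref{lem:fY}.
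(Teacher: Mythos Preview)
Your overall strategy is the paper's: Taylor-expand $y\mapsto G(Y_{i-1}^n,y,\th)$ about $y=Y_{i-1}^n$, decompose into $A_n+B_n+C_n$ (your $T_1^Y+T_2^Y+T_3^Y$), replace the $Y$-coefficients by $Z$-coefficients, and feed the leading pieces into the argument of Lemma~\ref{lem:GZ} via Lemma~\ref{lem:ergod}. The one substantive difference is in how the drift-perturbation term $\tfrac{1}{nh_n^2}\sum_i\p_y G(Y_{i-1}^n,Y_{i-1}^n,\th)\,r_i^n$ with $r_i^n=\delta_i^n-\delta_{i-1}^n$ is handled: the paper neither uses summation by parts nor appeals to any convergence rate of $\wh\xi_n$ --- it simply records $r_i^n=o_p(h_n)$ uniformly in $i$ (from $\wh\xi_n\toP\xi_0$ alone) and asserts that this sum, as well as the coefficient-replacement error $\tfrac{1}{nh_n^2}\sum_i|\p_y G(Y_{i-1}^n,Y_{i-1}^n,\th)-\p_y G(Z_{t_{i-1}},Z_{t_{i-1}},\th)||\Delta_i^n Z|$, are $o_p(1)$ by a direct appeal to Lemma~\ref{lem:ergod}; the $B_n$ piece is dismissed as ``similar'' and $C_n$ is bounded via $|\p_y^3 G|\le M$. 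Your summation-by-parts manoeuvre and the subsequent ``improvement via weak dependence'' to order $|\wh\xi_n-\xi_0|\,(nh_n^2)^{-1/2}$ are thus machinery beyond what the paper presents; you are right that the $h_n^{-2}$ prefactor makes this the delicate point, but the paper's own treatment of it is correspondingly terse.
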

  
\begin{proof}
Let us fix \( \theta = (\xi, \sigma) \in \ol{\Theta} \). Define
\[
T_n(\theta) := \frac{1}{n h_n^2} \sum_{i=1}^n G\l(Y_{i-1}^n, Y_i^n, \theta\r),
\]
where \( Y_i^n := X_{t_i} - \int_0^{t_i} \mu_{\wh{\xi}_n}(s)\,\df s \) with a consistent estimator \( \wh{\xi}_n \toP \xi_0 \).

By the model definition \( X_t = Z_t + \int_0^t \mu_{\xi_0}(s)\,\df s \), we have:
\[
X_{t_i} - X_{t_{i-1}} = Z_{t_i} - Z_{t_{i-1}} + \int_{t_{i-1}}^{t_i} \mu_{\xi_0}(s)\,\df s.
\]
Thus,
\[
Y_i^n - Y_{i-1}^n = (Z_{t_i} - Z_{t_{i-1}}) + r_i^n, \quad\text{where } r_i^n := \int_{t_{i-1}}^{t_i} (\mu_{\xi_0}(s) - \mu_{\wh{\xi}_n}(s))\,\df s.
\]
Using Taylor expansion of \( y \mapsto G(x,y,\theta) \) around \( y = x \), we get
\begin{align*}
G(x,y,\theta) &= \p_y G(x,x,\theta)(y - x) + \frac{1}{2} \p_y^2 G(x,x,\theta)(y - x)^2 + R(x,y,\theta),
\end{align*}
with the remainder \( R(x,y,\theta) = \frac{1}{6} \p_y^3 G(x, \zeta, \theta)(y - x)^3 \) for some \( \zeta \) between \( x \) and \( y \). Then,
\begin{align*}
T_n(\theta) &= \frac{1}{n h_n^2} \sum_{i=1}^n \Bigg\{ \p_y G(Y_{i-1}^n, Y_{i-1}^n, \theta)(Y_i^n - Y_{i-1}^n) + \frac{1}{2} \p_y^2 G(Y_{i-1}^n, Y_{i-1}^n, \theta)(Y_i^n - Y_{i-1}^n)^2 + R_i^n(\theta) \Bigg\} \\
&=: A_n(\theta)  + B_n(\theta) + C_n(\theta). 
\end{align*}
with \( R_i^n(\theta) \) as above. We now evaluate \( A_n(\theta) \) in detail:
\begin{align*}
A_n(\theta) &= \frac{1}{n h_n^2} \sum_{i=1}^n \p_y G(Y_{i-1}^n, Y_{i-1}^n, \theta)(\Delta_i^n Z + r_i^n) \\
&= \frac{1}{n h_n^2} \sum_{i=1}^n \p_y G(Z_{t_{i-1}}, Z_{t_{i-1}}, \theta) \Delta_i^n Z + R'_n(\theta),
\end{align*}
where the error term \( R'_n(\theta) \) is decomposed as
\begin{align*}
R'_n(\theta) &= \frac{1}{n h_n^2} \sum_{i=1}^n \l[ \p_y G(Y_{i-1}^n, Y_{i-1}^n, \theta) - \p_y G(Z_{t_{i-1}}, Z_{t_{i-1}}, \theta) \r] \Delta_i^n Z \\
&\quad + \frac{1}{n h_n^2} \sum_{i=1}^n \p_y G(Y_{i-1}^n, Y_{i-1}^n, \theta) r_i^n.
\end{align*}
For the first term, using the integral form of the mean value theorem, we write:
\begin{align*}
\p_y G(Y_{i-1}^n, Y_{i-1}^n, \theta) - \p_y G(Z_{t_{i-1}}, Z_{t_{i-1}}, \theta) = \int_0^1 \nabla_1 \p_y G(Z_{t_{i-1}} + u \delta_{i-1}^n, Z_{t_{i-1}} + u \delta_{i-1}^n, \theta) \cdot \delta_{i-1}^n \,\df u,
\end{align*}
where $\d_i^n = \int_0^{t_i} [\mu_{\xi_0}(s) - \mu_{\wh{\xi}_n}(s)]\,\df s$, so that the difference is bounded as
\[
\l| \p_y G(Y_{i-1}^n, Y_{i-1}^n, \theta) - \p_y G(Z_{t_{i-1}}, Z_{t_{i-1}}, \theta) \r| \le C(1 + |Z_{t_{i-1}}|^k) |\delta_{i-1}^n|,
\]
for some constant \( C \) and \( k \ge 0 \), using the polynomial growth of \( \nabla_1 \p_y G \).
By Lemma~\ref{lem:ergod}, this yields
\[
\frac{1}{n h_n^2} \sum_{i=1}^n \l| \p_y G(Y_{i-1}^n, Y_{i-1}^n, \theta) - \p_y G(Z_{t_{i-1}}, Z_{t_{i-1}}, \theta) \r| |\Delta_i^n Z| = O_p(h_n) \to 0.
\]
The second term is bounded by
\[
\l| \frac{1}{n h_n^2} \sum_{i=1}^n \p_y G(Y_{i-1}^n, Y_{i-1}^n, \theta) r_i^n \r| \le \frac{C}{n h_n} \sum_{i=1}^n (1 + |Z_{t_{i-1}}|^k) o_p(h_n)= o_p(1),
\]
again using Lemma~\ref{lem:ergod} and the consistency \( \wh{\xi}_n \toP \xi_0 \).
Therefore,  using Lemma \ref{lem:ergod} again, we conclude that 
\[
A_n(\theta) \toP -\frac{\p^2 K(0)}{2K(0)} \int_{\R} \p_y G(z, z, \theta) z \phi_{K(0)}(z)\,\df z, 
\]
The term $B_n(\theta)$ can be evaluated similarly using ergodic theory and standard limit theorems.

As for $C_n(\theta)$, recall that
\[
R_i^n(\theta) = \frac{1}{6} \p_y^3 G\l(X_{t_{i-1}}, \zeta_i, \theta \r) (X_{t_i} - X_{t_{i-1}})^3.
\]
By the assumption that $|\p_y^3 G(x,y,\theta)| \le M$ and the fact that $X_{t_i} - X_{t_{i-1}} = O_p(h_n^{1/2})$, we get
\begin{align*}
|C_n(\theta)| &\le \frac{M}{6 n h_n^2} \sum_{i=1}^n |X_{t_i} - X_{t_{i-1}}|^3 \\
&= O_p\l(\frac{1}{n h_n^2} \sum_{i=1}^n h_n^{3/2} \r) = O_p(h_n^{1/2}) \to 0.
\end{align*}
Hence, the result follows. 
\end{proof}

\begin{remark}
When $G(x,y) = f(y-x)$ for a function $f \in C^2(\mathbb{R})$ with polynomial growth, the limit in Lemma~\ref{lem:GY} simplifies to
\[
\frac{1}{n h_n^2} \sum_{i=1}^n f(Y^n_i - Y^n_{i-1} )
\to^{\mathbb{P}} -\frac{1}{2} \, \partial^2 K(0) \cdot f''(0),
\]
as $n \to \infty$, where $Y^n_i = X_{t_i} - \int_0^{t_i} \mu_{\widehat{\xi}_n}(s)\,\df s$ and $\widehat{\xi}_n \to^{\mathbb{P}} \xi_0$.
This result is consistent with Lemma~\ref{lem:f-DX}, which considers functions of the normalized increment
\[
\frac{\D_i^n X - \mu_\xi(t_{i-1}) h_n}{h_n} \approx \frac{Y^n_{i} - Y^n_{i-1}}{h_n},
\]
and shows convergence of the empirical average of $f$ evaluated at that normalized quantity to the expectation under the normal distribution with mean zero and variance $-\partial_t^2 K(0)$. In this case, the asymptotic mean reduces to a constant multiple of $f''(0)$, matching the second moment of the limiting distribution.
\end{remark}

\begin{remark}
Unlike Lemma~\ref{lem:f-DX}, the convergence in Lemma~\ref{lem:GY} does not generally hold uniformly with respect to $\xi \in \Xi$.
This difference arises from the nature of the function $G(x, y, \theta)$. In Lemma~\ref{lem:f-DX}, the function depends on only the factor $(x - y)$, and the integral term $\int_{t_{i-1}}^{t_i} \mu_\xi(s)\,\df s$ is of order $O(h_n)$ uniformly in $\xi$, allowing the substitution of $\xi$ without affecting the asymptotics. In contrast, the general form of $G(Y^n_{i-1}, Y^n_i, \theta)$ in Lemma~\ref{lem:GY} does not necessarily exhibit such cancellation or stability, and thus the impact of the drift term cannot be neglected. Therefore, it is necessary to incorporate the consistent estimator $\widehat{\xi}_n$ to appropriately correct for the drift component.
\end{remark}

\section{Ergodicity for Gaussian processes}\label{app:C}
\subsection{Fundamental ergodic theorems}
\bi
\item Let $Z$ be a (continuous) Gaussian process on a canonical space $(\Omega,\F,\P)$, where $\Omega = C(\R)$. (So $\P$ is the distribution of $X$)
\item $\th_\tau: \Omega \to \Omega$: the {\it shift operator} such that, for each $\omega\in \Omega$,
\[
\th_\tau\omega(t) = \omega(t - \tau),\quad \tau,t\in \R. 
\]
\ei

\begin{thm}[Maruyama \cite{m49} or Krishnapur \cite{k9}]\label{thm:maruyama}
Let $Z$ be a centered stationary Gaussian process on $\R^d$ with continuous covariance kernel $K$ and the spectral measure $\mu$:
for any $t,s\in \R$,
\[
\E[Z_t] = 0;\quad \E[Z_tZ_s]=K(t-s); \quad K(h) = \int_\R e^{ih x} \mu(\df x).
\]

Then
\bi
\item[(i)] $Z$ is ergodic if and only if $\mu$ has no atom.
\item[(ii) ] $Z$ is weakly mixing if and only if $K(t) =o(1)$ as $|t|\to \infty$.
\ei

\end{thm}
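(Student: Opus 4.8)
\section*{Proof plan}

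The plan is to translate the dynamical statements on the path space into the spectral model and then lift them to every Wiener chaos. Let $H:=\overline{\mathrm{span}}\,\{Z_t:t\in\R^d\}\subset L^2(\Omega,\F,\P)$ be the Gaussian Hilbert space; the shift $\th_\tau$ induces a unitary group $U_\tau$ on $L^2(\Omega,\F,\P)$ that leaves $H$ invariant, and the assignment $Z_t\mapsto e^{i\langle t,\cdot\rangle}$ extends to a unitary isomorphism $H_\C\cong L^2(\mu)$ under which $U_\tau$ becomes multiplication by $e^{i\langle\tau,\cdot\rangle}$ (this is where continuity of $K$ and the Bochner representation $K(\tau)=\int_\R e^{i\langle\tau,x\rangle}\,\mu(\df x)$ enter). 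By the Wiener--It\^o chaos decomposition $L^2(\Omega,\F,\P)=\bigoplus_{m\ge0}\mathcal{H}_m$ with $\mathcal{H}_0=\R$ and $\mathcal{H}_m\cong L^2_{\mathrm{sym}}(\mu^{\otimes m})$ (after complexification), and on $\mathcal{H}_m$ the operator $U_\tau$ acts as multiplication by $(x_1,\dots,x_m)\mapsto e^{i\langle\tau,x_1+\cdots+x_m\rangle}$. These identifications are the structural core of the whole proof.

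For part (i): the flow $(\th_\tau)$ is ergodic iff every $U_\tau$-invariant $F\in L^2(\Omega,\F,\P)$ is constant, which by the chaos decomposition is equivalent to the statement that for each $m\ge1$ the only $f\in L^2_{\mathrm{sym}}(\mu^{\otimes m})$ with $e^{i\langle\tau,x_1+\cdots+x_m\rangle}f\equiv f$ for all $\tau$ is $f=0$. Any such $f$ is supported on $\{x_1+\cdots+x_m=0\}$, and the $\mu^{\otimes m}$-measure of that set equals $\mu^{*m}(\{0\})$ (push-forward under the summation map). If $\mu$ is atomless, then so is every convolution power $\mu^{*m}$ (convolution with an atomless measure is atomless, by Fubini), hence $\mu^{*m}(\{0\})=0$ and $f=0$; thus $Z$ is ergodic. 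Conversely, if $\mu(\{a\})>0$ for some $a$, then $\mu(\{-a\})>0$ as well since $\mu$ is symmetric ($K$ being real), and $f:=\I_{\{(a,-a)\}}+\I_{\{(-a,a)\}}$ is a nonzero $U_\tau$-invariant element of $\mathcal{H}_2$, so $Z$ is not ergodic.

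For part (ii): I would show that $Z$ is mixing iff $\langle U_\tau F,G\rangle\to\E[F]\,\E[G]$ for $F,G$ in a dense subset, e.g.\ polynomials in finitely many $Z_t$; since $U_\tau$ respects the chaos grading and $\langle U_\tau\mathcal{H}_m,\mathcal{H}_{m'}\rangle=0$ for $m\ne m'$, it suffices to treat $F,G\in\mathcal{H}_m$, and expanding symmetrized tensors reduces the decay on $\mathcal{H}_m$ ($m\ge1$) to the first-chaos statement $\int_\R e^{i\langle\tau,x\rangle}h(x)\,\mu(\df x)\to0$ as $|\tau|\to\infty$ for every $h\in L^1(\mu)$. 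Taking $h\equiv1$ shows this forces $K(\tau)\to0$; conversely, if $K(\tau)\to0$ then the integral tends to $0$ first for trigonometric polynomials $h=\sum_k c_k e^{i\langle k,\cdot\rangle}$ (it equals the finite sum $\sum_k c_k K(\tau+k)$) and then for arbitrary $h$ by $L^1(\mu)$-density of trigonometric polynomials, which follows from uniqueness of Fourier transforms of finite measures. Hence $K(\tau)\to0$ is equivalent to mixing, which a fortiori yields the asserted weak mixing; the remaining implication can be recovered by running the argument of (i) on the product flow $Z\times Z$, whose spectral measure is atomless exactly when $\mu$ is.

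The step I expect to be the main obstacle, and the one I would write out in detail, is the reduction from the measure-theoretic notions (ergodicity, respectively mixing, of $(\th_\tau)$ on all of $(\Omega,\F,\P)$) to statements living on the first chaos: this rests on the Wiener--It\^o decomposition together with the explicit description of the $U_\tau$-action on each $\mathcal{H}_m$, and, for part (ii), on the Riemann--Lebesgue-type density argument in $L^1(\mu)$. Once this machinery is in place, the remaining ingredients --- symmetry of $\mu$, preservation of atomlessness under convolution, and the product-system computation --- are routine.
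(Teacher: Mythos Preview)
The paper does not give its own proof of this theorem; it merely records the statement and cites Maruyama~\cite{m49} and Krishnapur~\cite{k9}. Your chaos-decomposition approach is precisely the classical one used in those references, and your argument for (i) is correct and complete (a cosmetic remark: if the atom sits at $a=0$ you can already exhibit a nonconstant invariant vector in $\mathcal{H}_1$, namely $\I_{\{0\}}\in L^2(\mu)$, but your $\mathcal{H}_2$ construction also works verbatim in that case).

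For (ii) there is a genuine gap, though it is ultimately a defect of the statement rather than of your method. You correctly prove that \emph{mixing} is equivalent to $K(\tau)\to 0$: the density argument in $L^1(\mu)$ goes through once one approximates an arbitrary $h\in L^1(\mu)$ first by compactly supported continuous functions and then by finite exponential sums. What fails is your closing step for the reverse implication ``weak mixing $\Rightarrow K(\tau)\to 0$''. Running the product-flow argument yields only weak mixing $\Leftrightarrow$ ergodicity of $Z\times Z$ $\Leftrightarrow$ $\mu$ atomless, and $\mu$ atomless does \emph{not} force $K(\tau)\to 0$: the middle-thirds Cantor measure is continuous but its Fourier transform fails to vanish at infinity. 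For stationary Gaussian processes the correct picture is
\[
\text{ergodic}\ \Longleftrightarrow\ \text{weakly mixing}\ \Longleftrightarrow\ \mu\ \text{atomless},\qquad
\text{mixing}\ \Longleftrightarrow\ K(\tau)\to 0,
\]
and the second condition is strictly stronger than the first. So the equivalence in (ii) as printed is false; Maruyama's original result reads ``mixing'' rather than ``weakly mixing''. Once you replace the word accordingly, your proof is sound and nothing further is needed.
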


\begin{thm}[Birkoff's ergodic theorem] \label{thm:birkoff}
 $Z$ is ergodic if and only if, for any $g\in L^1(\P)$,
\[
\frac{1}{T}\int_0^T g(\th_\tau \omega)\,\df \tau \to \E[g],\quad T\to \infty, 
\]
almost surely or $L^1(\P)$ sense. Therefore, it converges at least in probability.
\end{thm}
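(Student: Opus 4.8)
The plan is to recognize the statement as the classical continuous-time Birkhoff ergodic theorem for the $\P$-preserving flow $(\th_\tau)_{\tau\in\R}$ on the canonical space $(\Omega,\F,\P)$, and to prove the two implications separately. The reverse implication is immediate: assuming the averaging property for every $g\in L^1(\P)$, let $A\in\F$ be shift-invariant, i.e.\ $\th_\tau^{-1}A=A$ for all $\tau$. Applying the hypothesis to $g=\I_A$ and using invariance gives $\frac1T\int_0^T\I_A(\th_\tau\omega)\,\df\tau=\I_A(\omega)$ for every $T$, while the left side converges to $\P(A)$; hence $\I_A=\P(A)$ a.s.\ and $\P(A)\in\{0,1\}$, which is exactly ergodicity of $Z$.

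For the forward implication, I would first reduce the continuous-time average to a discrete one. Set $S:=\th_1$ and $\bar g(\omega):=\int_0^1 g(\th_s\omega)\,\df s\in L^1(\P)$, which satisfies $\E[\bar g]=\E[g]$ by Fubini and flow-invariance of $\P$. Writing $T=N+\rho$ with $N=\lfloor T\rfloor$, $\rho\in[0,1)$, one has $\int_0^T g(\th_\tau\omega)\,\df\tau=\sum_{k=0}^{N-1}\bar g(S^k\omega)+\int_N^T g(\th_\tau\omega)\,\df\tau$, and the remainder is dominated by $\overline{|g|}(S^N\omega)$, which is $o(N)$ a.s.\ by a telescoping application of the discrete ergodic theorem to $\overline{|g|}\in L^1$. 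Thus the continuous and discrete averages have the same a.s.\ and $L^1$ limits, and the classical discrete Birkhoff theorem applied to $(S,\bar g)$ identifies this limit as $\E[\bar g\mid\mathcal I_S]$, where $\mathcal I_S$ is the $\sigma$-field of $S$-invariant sets.

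The delicate point is to recognize this limit as the constant $\E[g]$; one cannot just cite ergodicity of the flow, since $S=\th_1$ need not be ergodic when the flow is. To get around this I would run the mean ergodic theorem in parallel: the flow induces a strongly continuous unitary group $(U_\tau)_{\tau\in\R}$ on $L^2(\P)$ (bounded continuous cylinder functionals are dense and for them $U_\tau g\to g$ in $L^2$ as $\tau\to 0$, and $\|U_\tau\|=1$ extends strong continuity to all of $L^2$), so von Neumann's theorem gives $\frac1T\int_0^T U_\tau g\,\df\tau\to Pg$ in $L^2$ for $g\in L^2(\P)$, with $P$ the projection onto flow-invariant vectors, which by ergodicity are the constants. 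Comparing the $L^2$-limit $\E[g]$ with the a.s.\ limit $\E[\bar g\mid\mathcal I_S]$ along integer times (through a subsequence) forces $\E[\bar g\mid\mathcal I_S]=\E[g]$ a.s., settling the bounded-$L^2$ case.

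Finally I would upgrade to a.s.\ convergence for general $g\in L^1(\P)$ via the continuous-time maximal ergodic inequality $\P\l(\sup_{T>0}\tfrac1T\int_0^T|g(\th_\tau\omega)|\,\df\tau>\lambda\r)\le \|g\|_{L^1}/\lambda$, which follows from the discrete maximal inequality for $\overline{|g|}$ and the comparison estimate above; combined with a.s.\ convergence on the dense class of bounded functions and a truncation $g=g\I_{\{|g|\le M\}}+g\I_{\{|g|>M\}}$ with $M\to\infty$, this yields a.s.\ convergence for all $g\in L^1$. The $L^1$-convergence then follows from uniform integrability of the averages, and convergence in probability from either mode. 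I expect the main obstacle to be the limit-identification step — reconciling the a priori nontrivial invariant $\sigma$-field of the time-one map with the triviality of the flow-invariant field — which the argument handles by cross-checking the mean-ergodic $L^2$-limit against the Birkhoff a.s.-limit rather than by a direct $\sigma$-field comparison.
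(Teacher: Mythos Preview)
The paper does not actually prove this statement: Theorem~\ref{thm:birkoff} is stated in Appendix~\ref{app:C} as the classical Birkhoff ergodic theorem and is used as a background tool without proof, in the same way as Maruyama's Theorem~\ref{thm:maruyama}. So there is nothing to compare your argument against.

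That said, your sketch is a correct outline of the standard proof. The reverse implication via $g=\I_A$ is exactly right. For the forward direction, the reduction to the time-one map $S=\th_1$ through $\bar g(\omega)=\int_0^1 g(\th_s\omega)\,\df s$ is the usual device, and your remainder control $\overline{|g|}(S^N\omega)=o(N)$ a.s.\ via the Ces\`aro argument is valid. You are also right to flag that ergodicity of the flow does not automatically give ergodicity of $S$, and your workaround---identifying the a.s.\ limit $\E[\bar g\mid\mathcal I_S]$ with the constant $\E[g]$ by matching it against the von Neumann $L^2$-limit for bounded $g$, then upgrading to $L^1$ via the maximal inequality and truncation---is a legitimate route. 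One small technical point you should make explicit is the joint measurability of $(\tau,\omega)\mapsto g(\th_\tau\omega)$, needed both to define $\bar g$ and to invoke Fubini for $\E[\bar g]=\E[g]$; on the canonical space $\Omega=C(\R)$ with continuous paths this is unproblematic, but it is worth stating.
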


\subsection{Utility format in applications}

\bi
\item Suppose that $Z = (Z_t)_{t\ge 0}\sim GP(0,K)$ with $K(t) \to 0$ as $t\to \infty$.
\item Let $\pi_t\ (t\ge 0)$ is a canonical projection: $\pi_t Z = Z_t$.
\ei

For a function $f:\R\to \R$, of polynomial growth: $|f(x)| \lesssim (1 + |x|)^C$, and a fixed $t\ge 0$,
put $g(\omega) = f\circ \pi_t (\omega)\ (\omega \in \Omega)$, then
\[
g \sim^d f(Z_t) =:f(G),\quad G\sim {\cal N}(0,K(0)), 
\]
by the stationarity, and Thereom \ref{thm:birkoff} says that
\begin{align*}
\frac{1}{T} \int_0^T g(X(\cdot - \tau))\,\df \tau &= \frac{1}{T} \int_0^T f(Z_{T-\tau}) \,\df \tau = \frac{1}{T} \int_0^T f(Z_u) \,\df u \\
&\toP \E[f(G)] = \int_\R f(z) \phi_{K(0)}(z)\,\df z,\quad T\to \infty,
\end{align*}
where $\phi_{\mu, \Sigma}(z)$ is the probability density of ${\cal N}(\mu,\Sigma)$.

\begin{cor}\label{cor:ergodic}
Let $Z = (Z_t)_{t\ge 0}$ be a centered stationary Gaussian process  with $K(t) \to 0\ (t\to \infty)$.
Then it holds for any function $f:\R\to \R$, of polynomial growth that
\begin{align*}
 \frac{1}{T} \int_0^T f(Z_u) \,\df u \to \E[f(Z_0)]\quad \mbox{$as$ or in $L^1$}, \quad T\to \infty,  
\end{align*}
for any measurable function $f:\R\to \R$ such that $\E[f(Z_0)]<\infty$.  
\end{cor}

\end{document}